\newtheorem{theorem}{Theorem}[subsection]
\newtheorem{lemma}[theorem]{Lemma}
\newtheorem{proposition}[theorem]{Proposition}
\newtheorem{definition}[theorem]{Definition}
\numberwithin{equation}{section}
\newcommand{\defect}{\mathrm{d}}
\newcommand{\C}{\mathbb C}
\newcommand{\Z}{\mathbb Z}
\newcommand{\IN}{\mathbb N}
\newcommand{\A}{\mathbf A}
\newcommand{\s}{\mathscr S}
\newcommand{\card}{\mathrm{card}}
\newcommand{\GL}{\mathrm{GL}}
\newcommand{\SL}{\mathrm{SL}}
\newcommand{\SO}{\mathrm{SO}}
\newcommand{\Sp}{\mathrm{Sp}}
\DeclareMathOperator{\Lie}{Lie}
\DeclareMathOperator{\supp}{supp}
\DeclareMathOperator{\rank}{rank}
\begin{document}
\title{Primitive wonderful varieties}
\author{P.\ Bravi and G.\ Pezzini}
\begin{abstract}
We complete the classification of wonderful varieties initiated by D.~Luna. We review the results that reduce the problem to the family of primitive varieties, and report the references where some of them have already been studied. Finally, we analyze the rest case-by-case.
\end{abstract}

\maketitle

\section*{Introduction}

Let $G$ be a reductive connected linear algebraic group over the field of complex numbers $\C$. 
A {\em wonderful $G$-variety} is a complete and smooth $G$-variety with remarkable properties 
(see Definition~\ref{def:wonderful}), generalizing the wonderful compactifications of symmetric spaces 
defined by C.~De Concini and C.~Procesi in \cite{DP83}.

In the article \cite{Lu01} D.~Luna started a research program to classify wonderful varieties by means of certain invariants called {\em spherical systems}, which can be represented as combinatorial objects attached to the Dynkin diagram of $G$. A strategy to prove the classification, also known as the {\em Luna conjecture}, consists in reducing the problem to a distinguished class of cases called {\em primitive} (see \cite[Section 4.2]{Lu01}). This approach was already used in \cite{Lu01}, where groups $G$ of semisimple type $\mathsf A$ were considered, and in other works: \cite{BP05}, \cite{B07}, \cite{BC10}.

In this paper we complete the proof of the Luna conjecture along the lines of this program.  Thanks to \cite[Theorem 2]{Lu01} and the Luna-Vust theory of embeddings of spherical homogeneous spaces (see \cite{Kn91}), this completes the classification of {\em spherical varieties}, i.e.\ normal $G$-varieties where a Borel subgroup of $G$ has a dense orbit. 

Another proof of the Luna conjecture, with different methods, has been proposed by S.~Cupit-Foutou in \cite{CF}.

Our work is based on the original strategy of \cite{Lu01}; we also apply some additional techniques developed in our previous article \cite{BP11}, which lead to an updated and more restrictive definition of primitive wonderful varieties and primitive spherical systems, see Definition~\ref{def:primitive}. The combinatorial properties of this notion have been already discussed by the first-named author in \cite{B09}, where a complete list of primitive spherical systems is obtained.

It is already known that the invariants we consider distinguish between different $G$-isomorphism classes of wonderful varieties (see \cite{Lo09}), therefore we achieve the classification proving that each primitive system is {\em geometrically realizable}, i.e.\ comes from a wonderful variety.

The artice is organized as follows. In Section~\ref{s:reduction} we review the known results that lead to the definition of primitive spherical systems. 
This is done briefly, except for the notion of {\em decomposable} systems (see Section~\ref{s:products}) where a more detailed discussion is needed.

Then we discuss in Sections~\ref{s:L} and \ref{sect:expl} all cases of \cite{B09}. Some of them are already well-known, for example those corresponding to reductive wonderful subgroups of $G$ (see \cite{BP11b}). We refer for brevity a few other known cases to existing publications, and we analyze in detail the remaining ones.

A relevant byproduct of this proof of the Luna conjecture is an explicit description, albeit laborious, of a generic stabilizer of a wonderful variety using only its spherical system.

Indeed, if a wonderful $G$-variety $X$ is not primitive, or admits a so-called {\em quotient of higher defect} (see Definition~\ref{def:combqhd}), or has a {\em tail} (see Definition~\ref{def:tails}) then the results in \cite{BP11} provide a description of a generic stabilizer $H\subset G$ of $X$. The description of $H$ is concise, and relates $H$ to generic stabilizers of those varieties that can be considered the ``primitive components'' of $X$. If $X$ is primitive without quotients of higher defect and without tails then we describe the subgroup $H$ in this paper, referring ultimately to explicit lists.

\subsubsection*{Acknowledgments}
The second-named author was supported by the DFG Schwerpunktprogramm 1388 -- Darstellungstheorie.

\section{Classification of wonderful varieties}

\subsection{Notations}
All algebraic groups and varieties are defined over the field $\C$ of complex numbers. We fix a connected reductive algebraic group $G$, a maximal torus $T$ and a Borel subgroup $B\supset T$ of $G$. We denote by $S$ the corresponding set of simple roots, and by $B_-$ the opposite Borel subgroup of $B$ with respect to $T$. In a connected Dynkin diagram we will denote by $\alpha_1,\alpha_2,\ldots$ or $\alpha'_1,\alpha'_2,\ldots$ the simple roots, numbered as in Bourbaki. If $G$ is semisimple then the fundamental dominant weights will be denoted by $\omega_1, \omega_2, \ldots$ and numbered as the simple roots.

\subsection{Definitions and statement of the main result}
We start collecting some definitions and basic facts on wonderful and spherical varieties. In this and in the next sections we refer to \cite{Lu01} for details and further references.

\begin{definition}\label{def:wonderful}
A $G$-variety $X$ is {\em wonderful} (of {\em rank} $r$) if it is complete and non-singular, with an open $G$-orbit whose complement is the union of $r$ non-singular prime $G$-divisors $D_1,\ldots,D_r$, any subset of these prime $G$-divisors has a transversal and non-empty intersection, and these intersections are exactly all $G$-orbit closures of $X$.
\end{definition}

A wonderful $G$-variety is spherical (see \cite{Lu96}), i.e.\ it is normal with a dense $B$-orbit, and it is the unique (up to $G$-equivariant isomorphisms) wonderful compactification of its open $G$-orbit. The radical of $G$ is known to act trivially on it, therefore we may assume that the group $G$ is semi-simple.

The {\em spherical system} of a wonderful $G$-variety $X$ is a triple $\s_X=(S^p_X, \Sigma_X, \A_X)$ of invariants of $X$, defined as follows. 

Let $P_X$ be the stabilizer of the open $B$-orbit of $X$; it is a parabolic subgroup of $G$ containing $B$. Then $S^p_X$ denotes the subset of simple roots spanning  the root system of the standard Levi subgroup of $P_X$. 

By $\Sigma_X$ we denote the set of {\em spherical roots} of $X$, i.e.\ the $T$-weights of the quotient of tangent spaces $T_zX/T_z(Gz)$, where $z$ is the unique point of $X$ fixed by $B_-$. The set $\Sigma_X$ is a basis of the lattice of $B$-eigenvalues of $B$-eigenvectors in $\C(X)$. 

The {\em colors} of $X$ are its $B$-stable and not $G$-stable prime divisors, and a color $D$ is {\em moved} by a simple root $\alpha\in S$ if $D$ is not stable under the minimal parabolic subgroup of $G$ containing $B$ and corresponding to $\alpha$. By $\Delta_X$ we denote the set of colors of $X$ and by $\Delta_X(\alpha)$ the set of colors moved by $\alpha$. By \cite[Proposition 3.2]{Lu01} a simple root moves at most two colors of $X$ and it moves exactly two colors if and only if it is a spherical root. The set of colors of $X$ is a disjoint union of three subsets $\Delta_X=\Delta_X^a\cup\Delta_X^{2a}\cup\Delta_X^b$ where:
\begin{itemize}
\item $\Delta_X^a=\bigcup\Delta_X(\alpha)$ for all $\alpha\in S\cap\Sigma_X$,
\item $\Delta_X^{2a}=\bigcup\Delta_X(\alpha)$ for all $\alpha\in S\cap{\frac12}\Sigma_X$, 
\item $\Delta_X^b=\bigcup\Delta_X(\alpha)$ for all $\alpha\in S\smallsetminus(S_X^p\cup\Sigma_X\cup{\frac12}\Sigma_X)$.
\end{itemize} 
Furthermore, if $\alpha$ and $\beta$ belong to $S\smallsetminus(S_X^p\cup\Sigma_X)$, $\Delta_X(\alpha)=\Delta_X(\beta)$ only if $\alpha$ and $\beta$ are orthogonal and $\alpha+\beta\in\Sigma_X\cup2\Sigma_X$.

There is a $\Z$-bilinear pairing $c_X\colon \Z\Delta_X\times\Z\Sigma_X\to\Z$, called full Cartan pairing, induced by the valuations of $B$-stable divisors on $B$-eigenvectors in $\C(X)$. 
The set $\Delta_X^a$ is also denoted by $\A_X$ as well as $\Delta_X(\alpha)$ is denoted by $\A_X(\alpha)$ if the simple root $\alpha$ belongs to $\Sigma_X$. 
The full Cartan pairing is uniquely determined by its restriction to $c_X\colon \Z\A_X\times\Z\Sigma_X\to\Z$,
called restricted Cartan pairing,
indeed 
\[c_X(D,\sigma)=\left\{\begin{array}{ll}
{\frac12}\langle\alpha^\vee,\sigma\rangle & \mbox{ if $D\in\Delta_X(\alpha)$ with $\alpha\in S\cap{\frac12}\Sigma_X$} \\
\langle\alpha^\vee,\sigma\rangle & \mbox{ if $D\in\Delta_X(\alpha)$ with $\alpha\in S\smallsetminus(S_X^p\cup\Sigma_X\cup{\frac12}\Sigma_X)$} 
\end{array}\right.\]

The triple $\s_X=(S^p_X, \Sigma_X, \A_X)$ is a spherical $G$-system in the sense of the following definition.

\begin{definition}\label{def:system}
Let $(S^p,\Sigma,\A)$ be a triple such that $S^p \subset S$, $\Sigma$ is a linearly independent set of characters of $B$, and $\A$ a finite set endowed with a $\Z$-bilinear pairing $c\colon \Z\A\times\Z\Sigma\to\Z$. For every $\alpha \in \Sigma \cap S$, let $\A (\alpha)$ denote the set $\{D \in \A \colon c(D,\alpha)=1 \}$. 
Such a triple is called a {\em spherical $G$-system} (of {\em rank} $r=|\Sigma|$) if the following conditions are satisfied. 
\begin{itemize} 
\item[(A1)] For every $D \in \A$ we have $c(D,-)\leq 1$, and if $c(D, \sigma)=1$ for some $\sigma\in\Sigma$ then $\sigma \in S\cap\Sigma$. 
\item[(A2)] For every $\alpha \in \Sigma \cap S$ the set $\A(\alpha)$ contains exactly two elements; denoting with $D_\alpha^+$ and $D_\alpha^-$ these elements, it holds $c(D_\alpha^+,\sigma) + c(D_\alpha^-,\sigma) = \langle \alpha^\vee , \sigma \rangle$ for all $\sigma\in\Sigma$.
\item[(A3)] The set $\A$ is the union of $\A(\alpha)$ for all $\alpha\in\Sigma \cap S$.
\item[($\Sigma 1$)] If $2\alpha \in \Sigma \cap 2S$ then $\frac{1}{2}\langle\alpha^\vee, \sigma \rangle$ is a non-positive integer for every $\sigma \in \Sigma \smallsetminus \{ 2\alpha \}$.
\item[($\Sigma 2$)] If $\alpha, \beta \in S$ are orthogonal and $\alpha + \beta$ belongs to $\Sigma$ or $2\Sigma$ then $\langle \alpha ^\vee , \sigma \rangle = \langle \beta ^\vee , \sigma \rangle$ for every $\sigma \in \Sigma$.
\item[(S)] For every $\sigma \in \Sigma$, there exists a wonderful $G$-variety $X$ of rank $1$ with $S^p_X=S^p$ and $\Sigma_X=\{\sigma\}$. 
\end{itemize}
\end{definition}

We notice that the above definition is purely combinatorial, since wonderful varieties of rank $1$ are classified (see \cite{Br89}). For an explicit list of all spherical roots appearing in wonderful varieties we refer to \cite{W}. An equivalent combinatorial version of axiom (S) can be found in \cite[Section~1.1.6]{BL11}.

A spherical system $\s$ is {\em geometrically realizable} if it is of the form $\s=\s_X$ for a wonderful variety $X$.

The classification of wonderful varieties is then given by the following.

\begin{theorem}\label{thm:luna}
The map $X\mapsto \s_X$ induces a bijection between $G$-isomorphism classes of wonderful $G$-varieties and spherical $G$-systems.
\end{theorem}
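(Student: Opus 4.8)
The plan is to establish the two halves of the stated bijection separately. \emph{Injectivity} -- that non-isomorphic wonderful $G$-varieties have distinct spherical systems -- is already available: it follows from the work of Losev \cite{Lo09}, recalled in the introduction. Hence the entire difficulty lies in \emph{surjectivity}, i.e.\ in proving that every spherical $G$-system $\s$ in the sense of Definition~\ref{def:system} is geometrically realizable, $\s=\s_X$ for some wonderful $X$. Following Luna's original scheme, I would prove surjectivity by (1) reducing the realizability of an arbitrary $\s$ to that of the \emph{primitive} systems (Definition~\ref{def:primitive}), and (2) checking geometric realizability for each primitive system, using the explicit classification obtained in \cite{B09}.

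For step (1) I would assemble the reduction results reviewed in Section~\ref{s:reduction}. Concretely: a spherical system whose support is a proper subset of $S$ is obtained by \emph{parabolic induction} from a spherical system of a Levi subgroup, and parabolic induction preserves geometric realizability, so one may assume $\s$ is cuspidal; a \emph{decomposable} system (Section~\ref{s:products}) is a fiber product of systems of strictly smaller rank, and the corresponding wonderful variety is built as a fiber product of their wonderful varieties; and, by the techniques of \cite{BP11}, if $\s$ admits a quotient of higher defect (Definition~\ref{def:combqhd}) or has a tail (Definition~\ref{def:tails}) then its realizability reduces to that of systems that are, in a precise sense, simpler. Iterating, the realizability of every $\s$ is reduced to that of the primitive systems; and \cite{B09} provides the finite, explicit list of all primitive spherical systems, together with the combinatorial data needed to organize the case analysis.

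For step (2) I would go through the list of \cite{B09} case by case (Sections~\ref{s:L} and \ref{sect:expl}). For many entries a wonderful variety is already known and it suffices to quote it and match invariants: this covers the systems of reductive wonderful subgroups \cite{BP11b}, those coming from symmetric spaces and from spherical modules, the rank-one and rank-two cases, and the type~$\mathsf A$ cases of \cite{Lu01}. For each remaining entry the strategy is to exhibit a candidate spherical subgroup $H\subset G$ -- typically described as an extension of a connected centralizer or the stabilizer of a suitable tensor, read off from the combinatorial data -- verify that $G/H$ admits a wonderful compactification $X$, and then compute $S^p_X$, $\Sigma_X$, $\A_X$ and the Cartan pairing of $X$ and check that they agree with $\s$. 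As a byproduct this yields the explicit description of generic stabilizers announced in the introduction.

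The main obstacle is step (2) for the cases not covered by existing constructions: for these there is no a priori recipe for the generic stabilizer, so each requires an individually tailored construction followed by a nontrivial verification that the spherical invariants come out exactly as prescribed -- computing spherical roots, colors, and the restricted Cartan pairing for an explicitly given $H$ is laborious and error-prone. A secondary, more structural point to be careful about is the interface between steps (1) and (2): one must ensure that the purely combinatorial reduction underlying the list in \cite{B09} is matched, operation by operation, by a geometric construction preserving realizability -- in particular that the notions of decomposability, quotient of higher defect and tail used combinatorially in \cite{B09} coincide with the ones for which \cite{BP11} supplies the geometric reduction.
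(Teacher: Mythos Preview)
Your outline is essentially the paper's approach: injectivity from \cite{Lo09}, surjectivity by reducing to the primitive systems of \cite{B09} and then handling those case by case, quoting known constructions where available and building explicit $H$'s otherwise. A few organizational points differ from the paper and are worth getting straight before you execute the argument. First, the four reductions that define \emph{primitive} in \cite{B09} are cuspidality, indecomposability, absence of \emph{positive combs}, and absence of tails; quotients of higher defect are not part of this definition but a \emph{further} reduction applied among the primitive systems (your step~(1) conflates the two layers). Second, the reduction via positive $n$-combs with $n>1$ lands not only on primitive systems but also on systems carrying a \emph{primitive positive $1$-comb}; these form a separate family in \cite{B09} and require their own treatment (the paper's Section~\ref{sect:pp1c}), which your outline omits. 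Third, the paper inserts an early reduction to spherically closed systems, which is what justifies working over the adjoint group; this is minor but needed for the lists to match. Your closing caveat about aligning the combinatorial and geometric reductions is well taken---the paper in fact revises the definition of decomposability precisely to repair such a mismatch (Section~\ref{s:products}).
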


The injectivity of the map of the above theorem has been proven in \cite{Lo09}. The rest of this paper is devoted to the proof of the surjectivity.

A general observation about the proof: since spherical systems and wonderful varieties of rank $\leq 2$ are known to fulfill Theorem~\ref{thm:luna} (see  \cite{A,Br89,W}), we will prove geometric realizability of any given spherical system assuming that all systems of strictly lower rank have this property.

\subsection{Spherical closure}

The results in \cite{Lu01} reduce the classification to a certain class of wonderful varieties and spherical systems, both called {\em spherically closed}. This is actually negligible in our proof of Theorem~\ref{thm:luna}, except for the fact that it will allow us to assume that $G$ is adjoint. We review briefly this reduction, and refer to \cite[\S1, \S2, \S6 and \S7]{Lu01} and also \cite[\S2.4]{BL11} for details and proofs.

\begin{definition}
A subgroup $H\subseteq G$ is {\em spherical} if $G/H$ is a spherical $G$-variety, it is {\em wonderful} if $G/H$ admits a wonderful completion.
\end{definition}

Let $H\subseteq G$ be a wonderful subgroup and $X$ the wonderful completion of $G/H$. The normalizer $N_GH$ of $H$ in $G$ acts naturally by $G$-equivariant automorphisms on $G/H$; this induces an action of $N_GH$ on the set of colors of $X$, and the kernel of this action is called the {\em spherical closure} $\overline H$ of $H$.

In \cite{Kn96} it is shown that the homogeneous space $G/\overline H$ admits a wonderful completion $Y$, called the {\em spherical closure} of $X$, and $X$ is called {\em spherically closed} if $H=\overline H$ and $X=Y$.

The {\em spherical closure} $\overline \s$ of a spherical system $\s$ is obtained from $\s$ by replacing $\sigma$ with $2\sigma$ for all $\sigma$ spherical root not belonging to the root lattice or having the following form:
\begin{itemize}
\item[$(\mathsf B$)] $\sigma = \alpha_1+\ldots+\alpha_n$, 
where $\{\alpha_1,\ldots,\alpha_n\}\subseteq S$ has type $\mathsf B_n$ and $\alpha_n\in S^p$, or
\item[$(\mathsf G$)] $\sigma = 2\alpha_1+\alpha_2$, where $\{\alpha_1,\alpha_2\}\subseteq S$ has type $\mathsf G_2$.
\end{itemize}
A spherical system $\s$ is called {\em spherically closed} if $\overline \s=\s$. The geometric and the combinatorial definitions agree: the spherical system of the spherical closure of $X$ is $\overline{\s_X}$. This fact is shown in \cite[Section 7]{Lu01} assuming Theorem~\ref{thm:luna}, but it is also an immediate consequence of \cite[Theorem 2]{Lo09}.

A way to consider the relationship between $\s$ and $\overline\s$ is expressed by the fact that the group $\Z\Sigma$ and the Cartan pairing $c$ of $\s$ are an {\em augmentation} (see \cite[\S2.2]{Lu01}) of the spherical system $\overline \s$, and $(\overline\s, \Z\Sigma, c)$ is a {\em homogeneous spherical datum} of $G$ in the sense of \cite[\S2.2]{Lu01}.

In general, these notions are defined as follows: a couple $(\Xi, c)$ is an {\em augmentation} of a spherical system $\s=(S^p,\Sigma,\A)$ if $\Xi$ is a group of characters of $B$ containing $\Z\Sigma$ and $c$ is a pairing $c\colon \Z\A\times \Xi\to \Z$ extending the Cartan pairing of $\s$, such that the axioms (A2), ($\Sigma$1) and ($\Sigma$2) hold for all $\sigma\in \Xi$, except for the non-positivity condition of axiom ($\Sigma$1). An {\em homogeneous spherical datum} of $G$ is a triple $(\s,\Xi,c)$ where $\s$ is a spherical $G$-system and $(\Xi,c)$ is an augmentation of $\s$.

These constructions are used in \cite{Lu01} to deal with more general spherical varieties than wonderful ones, so not all homogeneous spherical data are obtained as above, i.e.\ not all are of the form $(\overline\s, \Z\Sigma, c)$ where $\s$ is a spherical system. Let us call here {\em special} those that have such form.

It is equivalent to work with a spherical $G$-system $\s$ or with $(\overline \s, \Z\Sigma, c)$, since one determines the other. Therefore we can restate Theorem~\ref{thm:luna} equivalently as follows:
\begin{quote} The map $X\mapsto (\overline{\s_X}, \Z\Sigma_X, c_X)$ induces a bijection between $G$-isomorphism classes of wonderful $G$-varieties and special homogeneous spherical data of $G$.
\end{quote}

Finally, let $Z(G)$ be the center of $G$ and $G_0=G/Z(G)$ the adjoint group of $G$. Thanks to \cite[Theorem 2]{Lu01}, the above second version of Theorem~\ref{thm:luna} is a consequence of the following statement:
\begin{quote} The map $X\mapsto \s_X$ induces a bijection between $G_0$-isomorphism classes of spherically closed wonderful $G_0$-varieties and spherically closed spherical $G_0$-systems.
\end{quote}

Therefore we will assume from now on that $G$ is an adjoint group. Under this assumption all spherical roots belong to the root lattice. Actually, they are sums of simple roots, i.e.\ $\Sigma\subseteq \IN S$.

\subsection{Colors and quotients}
Let $\s=(S^p,\Sigma,\A)$ be a spherical $G$-system. We define its set of {\em colors}, in analogy with the fact that $\A_X$ is not the full set of colors of a wonderful variety $X$. We accomplish this task combinatorially, using the fact that $\A_X$ is precisely the set of colors moved by simple roots that move two colors each. Therefore the other simple roots move at most one color each, and their behaviour is governed by \cite[Proposition 3.2]{Lu01}.

\begin{definition}
The set of {\em colors} of $\s$ is the finite set $\Delta$ obtained as disjoint union $\Delta=\Delta^a\cup\Delta^{2a}\cup\Delta^b$ where:
\begin{itemize}
\item $\Delta^a=\A$,
\item $\Delta^{2a}=\{D_\alpha\;|\; \alpha\in S\cap{\frac1 2}\Sigma\}$,
\item $\Delta^b=\{D_\alpha \;|\; \alpha\in S\smallsetminus(S^p\cup\Sigma\cup{\frac1 2}\Sigma)\}/\sim$, where $D_\alpha\sim D_\beta$ if $\alpha$ and $\beta$ are orthogonal and $\alpha+\beta\in\Sigma$.
\end{itemize} 
For all $\alpha\in S$ the set of colors {\em moved by $\alpha$} is:
\[\Delta(\alpha)=\left\{\begin{array}{ll}
\varnothing & \mbox{ if $\alpha\in S^p$} \\
\A(\alpha) & \mbox{ if $\alpha\in\Sigma$} \\
\{D_\alpha\} & \mbox{ otherwise}
\end{array}\right.\]
The (full) Cartan pairing of $\s$ is the $\Z$-bilinear map $c\colon\Z\Delta\times\Z\Sigma\to\Z$ defined as:
\[c(D,\sigma)=\left\{\begin{array}{ll}
c(D,\sigma) & \mbox{ if $D\in\Delta^a$} \\
{\frac1  2}\langle\alpha^\vee,\sigma\rangle & \mbox{ if $D=D_\alpha\in\Delta^{2a}$} \\
\langle\alpha^\vee,\sigma\rangle & \mbox{ if $D=D_\alpha\in\Delta^b$} 
\end{array}\right.\]
\end{definition} 

Therefore, if $X$ is a wonderful $G$-variety, the set of colors of $\s_X$ is naturally identified with the set of colors of $X$.

This allows to define {\em quotients} of spherical systems, i.e.\ the combinatorial counterpart of a certain class of morphisms between wonderful varieties. Let $\s=(S^p,\Sigma,\A)$ be a spherical $G$-system with set of colors $\Delta$ and Cartan pairing $c$.

\begin{definition}
A subset of colors $\Delta'\subset\Delta$ is distinguished if for all $D\in\Delta'$ there exists a positive integer $a_D$ such that $\sum_{D\in\Delta'}a_D c(D,\sigma)\geq0$ for all $\sigma\in\Sigma$.
\end{definition}

\begin{proposition}[{\cite[Theorem~3.1]{B09}}]
If $\Delta'\subset\Delta$ is distinguished then:
\begin{itemize}
\item the monoid $\{\sigma\in\mathbb N\Sigma\;|\; c(D,\sigma)=0\mbox{ for all }D\in\Delta'\}$ is free;
\item setting $S^p/\Delta'=\{\alpha\;|\; \Delta(\alpha)\subset \Delta'\}$, $\Sigma/\Delta'$ equal to the basis of the above monoid and $\A/\Delta'=\cup_{\alpha\in S\cap\Sigma/\Delta'}\A(\alpha)$, the triple $(S^p/\Delta', \Sigma/\Delta', \A/\Delta')$ is a spherical $G$-system.
\end{itemize}
\end{proposition}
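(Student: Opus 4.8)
The plan is to package the distinguishing weights into a single linear form and reduce everything to properties of the Cartan pairing. Set $\ell:=\sum_{D\in\Delta'}a_D\,c(D,-)\colon\mathbb Z\Sigma\to\mathbb Z$; by the definition of distinguished, $\ell(\sigma)\geq 0$ for every $\sigma\in\Sigma$, hence $\ell\geq 0$ on the cone $\mathbb R_{\geq 0}\Sigma$. The geometric heuristic behind the statement (see \cite{Lu01}) is that a distinguished $\Delta'$ corresponds to a $G$-morphism $X\to X'$ of wonderful varieties contracting exactly the colors in $\Delta'$, with $\Sigma_{X'}$ the basis of the monoid in question, which makes freeness automatic; but since the statement is needed combinatorially, the argument must proceed from the axioms of Definition~\ref{def:system}.

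\emph{Freeness.} Write $M=\{\sigma\in\mathbb N\Sigma\mid c(D,\sigma)=0\text{ for all }D\in\Delta'\}$ and $\Sigma_0=\{\tau\in\Sigma\mid\ell(\tau)=0\}$. Since $\Sigma$ is linearly independent the cone $\mathbb R_{\geq 0}\Sigma$ is simplicial, so for $\sigma=\sum_{\tau\in\Sigma}n_\tau\tau\in\mathbb N\Sigma$ one has $\ell(\sigma)=\sum_\tau n_\tau\ell(\tau)$ with all terms nonnegative, whence $N:=\{\sigma\in\mathbb N\Sigma\mid\ell(\sigma)=0\}$ equals the free monoid $\mathbb N\Sigma_0$; moreover $M\subseteq N$, and on $N$ the relation $\sum_{D\in\Delta'}a_D\,c(D,-)=0$ holds identically. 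One is thus reduced to showing that the common zero locus in $\mathbb N\Sigma_0$ of the finitely many integral forms $c(D,-)$, $D\in\Delta'$, is a free monoid, i.e.\ that the minimal generators of $M$ are $\mathbb Z$-linearly independent. This is the technical heart of the argument, and I expect it to be the main obstacle. I would carry it out by a case analysis on the types of the colors in $\Delta'$, exploiting that $c(D,-)$ equals $\langle\alpha^\vee,-\rangle$ on a color $D=D_\alpha\in\Delta^b$, equals $\tfrac12\langle\alpha^\vee,-\rangle$ on $D=D_\alpha\in\Delta^{2a}$, and is bounded above by $1$ on $\A$ by (A1) with the complementary identity supplied by (A2); combined with the fact that every $\sigma\in\Sigma$ is a sum of simple roots with bounded coefficients (forced by (S) and the classification of rank-$1$ spherical roots) and with ($\Sigma 1$), ($\Sigma 2$), this rules out the cancellations among the $a_D\,c(D,\tau)$ that would produce non-simplicial behaviour, and identifies the supports of the minimal generators of $M$ as certain ``linked'' subsets of $\Sigma_0$ giving rise to spherical roots in a controlled list of shapes.

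\emph{The quotient triple.} The analysis above also produces the short list of possible shapes of a generator $\sigma$ of $M$: either $\sigma\in\Sigma$, or $\sigma$ is a sum of elements of $\Sigma$ whose supports fit together into one of finitely many admissible configurations. With $\Sigma/\Delta'$ the basis of $M$ and $S^p/\Delta'$, $\A/\Delta'$ as in the statement, the axioms of Definition~\ref{def:system} are then verified as follows. Axiom (A3) holds by construction of $\A/\Delta'$. For (A2): if $\alpha\in S\cap\Sigma/\Delta'$ then, since each element of $\Sigma$ is a sum of simple roots, comparing coefficients forces $\alpha$ to be itself an element of $\Sigma$ (a simple root cannot be a proper sum of spherical roots); hence $\A(\alpha)=\{D_\alpha^+,D_\alpha^-\}$ and the identity $c(D_\alpha^+,\sigma)+c(D_\alpha^-,\sigma)=\langle\alpha^\vee,\sigma\rangle$ holds for every $\sigma\in\Sigma/\Delta'\subseteq\mathbb N\Sigma$ by $\mathbb Z$-linearity of $c$. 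Axioms (A1), ($\Sigma 1$) and ($\Sigma 2$) are reduced in the same way, by linearity, to the corresponding statements for $\s$, using the list of shapes to control the finitely many terms in which $c(D,\tau)$ or $\tfrac12\langle\alpha^\vee,\tau\rangle$ equals $1$. Finally, for (S) one must, for each $\sigma\in\Sigma/\Delta'$, exhibit a rank-$1$ wonderful $G$-variety with $S^p_X=S^p/\Delta'$ and $\Sigma_X=\{\sigma\}$; by the classification of rank-$1$ wonderful varieties this reduces to checking that $\supp\sigma$ embeds in the Dynkin diagram in the pattern prescribed for a spherical root of its shape and that $S^p/\Delta'$ meets $\supp\sigma$ as required, which one reads off from the definitions of $S^p/\Delta'$ and $\A/\Delta'$ and from the known list of spherical roots. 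This last verification I expect to be routine but laborious.
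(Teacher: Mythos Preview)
The paper does not give its own proof of this proposition; it is stated with a citation to \cite[Theorem~3.1]{B09}, where the full combinatorial argument is carried out. So there is no proof in the paper to compare against, and your task was in effect to reconstruct the content of that reference.

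Your outline is in the right spirit and the initial reduction is sound: the linear form $\ell=\sum_{D\in\Delta'}a_D\,c(D,-)$ is nonnegative on $\Sigma$, so $M$ sits inside the free monoid $N=\mathbb N\Sigma_0$ with $\Sigma_0=\{\tau\in\Sigma:\ell(\tau)=0\}$, and you correctly identify that proving $M$ is free inside $N$ is the real content. You are also honest that this step is the ``main obstacle'' and would require a case analysis; that is exactly what happens in \cite{B09}, and it is lengthy.

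Where your write-up is too thin to count as a proof is in the verification of the axioms for the quotient triple. The claim that (A1), ($\Sigma 1$), ($\Sigma 2$) ``are reduced in the same way, by linearity'' is not sufficient as stated: for $\sigma=\sum_\tau n_\tau\tau\in\Sigma/\Delta'$ one has $c(D,\sigma)=\sum_\tau n_\tau\,c(D,\tau)$, and nothing in the axioms for $\s$ alone bounds this sum by $1$ or forces the required non-positivity. These bounds come only after one has the explicit list of possible generators of $M$ (the ``shapes'' you mention but do not produce), so that one can see which $c(D,\tau)$ are positive and check directly. Similarly, axiom (S) for $\s/\Delta'$ needs one to verify, shape by shape, that each new $\sigma$ together with the enlarged set $S^p/\Delta'$ matches an entry in the rank-one classification; this is routine once the list is in hand, but it is not a formal consequence of the axioms for $\s$. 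In short, your reduction is correct, but the proof proper lives in the case analysis you defer.
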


If $\Delta'$ is distinguished then the spherical $G$-system $\s/\Delta'=(S^p/\Delta', \Sigma/\Delta', \A/\Delta')$ is called the {\em quotient} of $\s$ by $\Delta'$. We also use the notation $\s\to\s/\Delta'$. The set of colors of $\s/\Delta'$ can be identified with $\Delta\smallsetminus\Delta'$.

On the geometric side, let now $f\colon X\to Y$ be a surjective $G$-morphism with connected fibers between wonderful $G$-varieties. Then the subset $\Delta_f=\{D\in\Delta_X\;|\; f(D)=Y\}$ is distinguished, and $\s_Y=\s_X/\Delta_f$. Moreover, the following holds.

\begin{proposition}[{\cite[\S3.3]{Lu01}}]\label{prop:morphisms}
Let $X$ be a wonderful $G$-variety. The map $f\mapsto \Delta_f$ induces a bijection between distinguished subsets of $\Delta_X$ and $G$-isomorphism classes%
\footnote{Here two morphisms $f_1\colon X\to Y_1$, $f_2\colon X\to Y_2$ are {\em $G$-isomorphic} if there is a $G$-equivariant isomorphism $\varphi\colon Y_1\to Y_2$ such that $f_2=\varphi\circ f_1$.}
of surjective $G$-morphisms with connected fibers from $X$ onto another wonderful $G$-variety.
\end{proposition}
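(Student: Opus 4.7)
The plan is to use the Luna--Vust theory of spherical embeddings (see \cite{Kn91}) to translate the statement into combinatorial terms on colored fans. Under this theory a $G$-equivariant morphism $f\colon X\to Y$ between spherical varieties corresponds to an inclusion of the colored data of $Y$ in that of $X$ inside a fixed valuation cone, and the problem reduces to recognizing these inclusions as exactly those parametrized by distinguished subsets of $\Delta_X$.

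For well-definedness, given $f\colon X\to Y$ one chooses compatible base points so that $f$ factors through the natural map $G/H_X\to G/H_Y$ with $H_X\subseteq H_Y$. The colors of $Y$ pull back bijectively to the colors of $X$ lying outside $\Delta_f$, whereas each $G$-stable prime divisor $D^Y_\sigma$ of $Y$ (indexed by $\sigma\in\Sigma_Y\subseteq\Z\Sigma_X$) pulls back to an effective divisor whose color-component is supported on $\Delta_f$. Reading off the multiplicities through the Cartan pairing produces positive integers $a_D$ (the ramification indices) for which $\sum_{D\in\Delta_f}a_D\,c(D,\sigma)\geq 0$ holds for every $\sigma\in\Sigma_X$, which is exactly the distinguishedness condition.

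For injectivity, if $\Delta_{f_1}=\Delta_{f_2}$ then $Y_1$ and $Y_2$ carry the same spherical system $\s_X/\Delta_f$, so by \cite{Lo09} there is a $G$-equivariant isomorphism $\varphi\colon Y_1\to Y_2$; modifying $\varphi$ by a $G$-automorphism of $Y_2$ one arranges $f_2=\varphi\circ f_1$ on the open orbit, hence everywhere by density. For surjectivity, given a distinguished $\Delta'\subseteq\Delta_X$, the preceding proposition provides the spherical system $\s_X/\Delta'$; Luna--Vust then produces a simple complete $G$-embedding $Y$ of a suitable intermediate quotient $G/H'$ of $G/H_X$ whose colored data matches $\s_X/\Delta'$, and the canonical dominant morphism $f\colon X\to Y$ satisfies $\Delta_f=\Delta'$ by construction.

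The most delicate step is verifying that the $Y$ produced in the last step is \emph{wonderful} in the sense of Definition~\ref{def:wonderful}, not merely a normal complete $G$-variety: one needs smoothness together with the transversality of the $G$-orbit closures prescribed there. The local structure theorem at the unique closed $G$-orbit of $Y$ reduces this to smoothness of an affine toric slice, which in turn is guaranteed by the freeness of the monoid $\Sigma/\Delta'$ provided by the preceding proposition.
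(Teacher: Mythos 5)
The paper does not give its own proof of this proposition; it is cited directly from \cite[\S3.3]{Lu01}. Your choice of framework --- the Luna--Vust theory of spherical embeddings together with Losev's uniqueness theorem --- is indeed the framework in which the result is established there.

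However, your well-definedness argument is wrong. You claim that a $G$-stable prime divisor $D^Y_\sigma$ of $Y$ pulls back to an effective divisor whose color-component is supported on $\Delta_f$, and propose to read the integers $a_D$ off the ramification. But the colors $D\in\Delta_f$ are by definition exactly those with $f(D)=Y$; a prime divisor dominating $Y$ cannot be contained in $f^{-1}(D^Y_\sigma)$. In fact \emph{no} color of $X$ lies in $f^{-1}(D^Y_\sigma)$ at all: colors of a wonderful variety meet the open $G$-orbit (wonderful varieties are toroidal), while $D^Y_\sigma$ does not, so the image of any color meets the open orbit of $Y$ and escapes $D^Y_\sigma$. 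Thus the ``color-component'' you describe vanishes identically and yields no $a_D$. The distinguishedness inequality $\sum_{D\in\Delta_f}a_D\,c(D,\sigma)\geq 0$ for all $\sigma\in\Sigma_X$ is really a statement about the element $\sum a_D\rho(D)$ lying in the cone $-\mathcal{V}_X$ dual to $\Sigma_X$, i.e.\ a valuation-cone condition on the colors contracted by $f$, which Luna obtains by a genuinely different argument.

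Two further steps are also doing more than they can bear. For injectivity, you invoke \cite{Lo09} to get \emph{some} $G$-isomorphism $\varphi\colon Y_1\to Y_2$ and then say one can ``modify $\varphi$ by a $G$-automorphism'' to force $f_2=\varphi\circ f_1$; but that two surjections with the same $\Delta_f$ determine the same conjugacy class of intermediate subgroup $H'\supseteq H_X$ (hence differ by an automorphism of the target) is precisely what the injectivity assertion of the proposition says, so this is circular. For surjectivity, the Luna--Vust machinery as ordinarily packaged parametrizes embeddings of a \emph{fixed} homogeneous space; producing the new subgroup $H'$ between $H_X$ and $G$ with the prescribed combinatorics is an extra step (the heart of \cite[\S3.3.2]{Lu01}, or of Knop's ``colored subspaces'' correspondence) that your outline leaves implicit. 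The final paragraph, verifying smoothness of $Y$ via the local structure theorem and freeness of the monoid $\Sigma/\Delta'$, is the only part that is sound as written.
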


\section{Reduction to the primitive cases}\label{s:reduction}
We reduce the proof of Theorem~\ref{thm:luna} to a certain subclass of wonderful varieties and spherical systems called primitive, via four main reduction techniques: {\em localization}, {\em decomposition into fiber product}, {\em positive combs} and {\em tails}.

\subsection{Localizations}
For all $\sigma=\sum n_\alpha \alpha$ in $\mathbb N S$, set $\supp\sigma=\{\alpha\in S \;|\; n_\alpha\neq0\}$. For all $\Sigma\subset \mathbb N S$, set $\supp\Sigma=\cup_{\sigma\in\Sigma}\supp\sigma$.

The reduction step discussed in this section applies to all spherical systems where the set $\supp\Sigma$ does not cover the whole $S$.

\begin{definition}
Let $\s=(S^p,\Sigma,\A)$ be a spherical $G$-system. For all subsets of simple roots $S'\subseteq S$, consider a semi-simple group $G_{S'}$ with set of simple roots $S'$; we define the {\em localization} $\s_{S'}$ of $\s$ in $S'$ as the spherical $G_{S'}$-system $((S')^p,\Sigma',\A')$ as follows:
\begin{itemize}
\item $(S')^p=S^p\cap S'$,
\item $\Sigma'=\{\sigma\in\Sigma\;|\; \supp\sigma\subseteq S'\}$,
\item $\A'=\cup_{\alpha\in S\cap\Sigma'}\A(\alpha)$.
\end{itemize}
\end{definition}

The geometric counterpart of the above definition is the following.
\begin{definition}
Let $X$ be a wonderful $G$-variety. For all subsets of simple roots $S'\subseteq S$ we define the {\em localization} $X_{S'}$ of $X$ in $S'$ to be the subvariety $X^{P^r}$ of points fixed by the radical $P^r$ of $P$, where $P$ is the parabolic subgroup containing $B_-$ and corresponding to $S'$.
\end{definition}

\begin{proposition}[{\cite[\S3.2]{Lu01}}]\label{prop:locS}
Under the action of $G_{S'}=P/P^r$ the variety $X_{S'}$ is wonderful, and 
\[\s_{(X_{S'})}=(\s_X)_{S'}.\]
\end{proposition}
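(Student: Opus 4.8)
\textbf{Proof plan for Proposition~\ref{prop:locS}.}

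The plan is to reduce everything to known properties of the localization $X_{S'} = X^{P^r}$, where $P \supseteq B_-$ is the standard parabolic with Levi factor $G_{S'} = P/P^r$, following \cite[\S3.2]{Lu01}. First I would recall that the fixed-point set of a connected unipotent group (here $P^r$) acting on a smooth complete variety is smooth and complete, so $X_{S'}$ is a smooth complete $G_{S'}$-variety; its nonemptiness follows because $z \in X^{B_-} \subseteq X^{P^r}$. Next I would analyze the orbit structure: the $G_{S'}$-orbits on $X_{S'}$ should be exactly the intersections with $X_{S'}$ of those $G$-orbit closures of $X$ that meet $X_{S'}$, and one checks that a $G$-orbit closure $Z_I = \bigcap_{i \in I} D_i$ meets $X^{P^r}$ in a single $G_{S'}$-orbit closure. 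Combined with the fact that the boundary divisors of $X$ restrict to boundary divisors of $X_{S'}$ (those $D_i$ not containing $X_{S'}$) which again intersect transversally, this yields that $X_{S'}$ is wonderful under $G_{S'}$; its rank equals the number of spherical roots of $X$ supported in $S'$.

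The second assertion, $\s_{(X_{S'})} = (\s_X)_{S'}$, would then be verified invariant by invariant. For $S^p$: the stabilizer in $G_{S'}$ of the open $B \cap G_{S'}$-orbit of $X_{S'}$ has Levi part generated by $S^p \cap S'$, because the open $B$-orbit of $X$ meets $X_{S'}$ in the open $B \cap G_{S'}$-orbit and $P_X \cap P$ projects to the corresponding parabolic of $G_{S'}$. For $\Sigma$: the point $z$ lies in $X_{S'}$ and is fixed by $(B \cap G_{S'})_-$; the tangent space computation $T_z X_{S'} / T_z(G_{S'} z)$ picks out precisely the $T$-weights of $T_z X / T_z(Gz)$ that lie in $\mathbb N S'$, which is the definition of $\Sigma' = \{\sigma \in \Sigma : \supp\sigma \subseteq S'\}$. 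For $\A$ and the Cartan pairing: colors of $X$ moved by simple roots in $S'$ restrict to colors of $X_{S'}$, and since by \cite[Proposition 3.2]{Lu01} a simple root of $S'$ moves two colors in $X$ iff it is a spherical root iff it moves two colors in $X_{S'}$, the set $\A'$ matches $\cup_{\alpha \in S' \cap \Sigma'} \A(\alpha)$; the restricted Cartan pairing is unchanged because the valuations defining it are computed from the same $B$-eigenvectors restricted to $X_{S'}$.

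The main obstacle I anticipate is the orbit-structure step: showing that $X^{P^r}$ is genuinely wonderful rather than merely smooth and complete — specifically, that the intersections of its boundary divisors are transversal, nonempty, and exhaust the $G_{S'}$-orbit closures. This requires a careful local analysis at $z$ using the structure of the wonderful variety in a neighborhood of the closed orbit (the ``standard'' affine chart, where $X$ looks like $P_X^- \times_{L} V$ for a suitable $L$-module $V$ with weights the simple roots in $S \smallsetminus S^p$ together with the spherical roots), and checking that taking $P^r$-fixed points simply deletes the coordinates whose weights are not supported in $S'$. Since \cite[\S3.2]{Lu01} already carries this out, I would present the argument by invoking that reference for the geometric core and then spelling out the bookkeeping that identifies each of $S^p$, $\Sigma$, $\A$, and $c$ as claimed.
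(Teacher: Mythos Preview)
The paper does not give its own proof of this proposition: it is stated with attribution to \cite[\S3.2]{Lu01} and no argument follows. Your plan is therefore not competing with anything in the present paper; it is essentially a sketch of what one finds in Luna's original treatment, and the overall strategy (local structure theorem near the closed orbit, then bookkeeping for $S^p$, $\Sigma$, $\A$, $c$) is the right one.

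One point to correct: $P^r$ is the \emph{radical} of $P$, hence connected solvable, not unipotent. Your opening sentence invokes smoothness of fixed points of a connected unipotent group, which is not a standard result and in any case does not apply here. The clean way to get smoothness and the wonderful structure of $X_{S'}$ is exactly the local-structure argument you mention later: in the open $P_X$-stable affine chart $M_X \cong P_X^u \times \C^{\Sigma_X}$, taking $P^r$-fixed points visibly cuts out $P_{X_{S'}}^u \times \C^{\Sigma'}$, and $G$-translates of this chart cover $X_{S'}$. So rather than appealing to a general fixed-point smoothness statement, lead with the local structure theorem; everything else in your plan then goes through as written.
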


We come to the actual reduction step.

\begin{proposition}[{\cite[\S3.4]{Lu01}}]\label{prop:induction}
Let $\s=(S^p,\Sigma,\A)$ be a spherical $G$-system and let $S'\subseteq S$ be a subset containing $S^p\cup\supp\Sigma$. If there exists a wonderful $G_{S'}$-variety $Y$ with spherical system $\s_{S'}$ then there exists a wonderful $G$-variety $X$ with spherical system $\s$. Precisely, $X$ is a {\em parabolic induction} of $Y$, that is, 
\[X\cong G\times^P Y\] where $P$ is the parabolic subgroup containing $B_-$ corresponding to $S'$, and we let $P$ act on $Y$ via its quotient $P/P^r=G_{S'}$.
\end{proposition}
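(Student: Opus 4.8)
The plan is to prove Proposition~\ref{prop:induction} by explicitly constructing the parabolic induction $X = G \times^P Y$ and verifying that it is wonderful with spherical system $\s$. First I would recall the standard setup for parabolic induction of spherical (more precisely, wonderful) varieties: given the parabolic $P \supseteq B_-$ corresponding to $S'$, with unipotent radical $P^r$ and Levi quotient $P/P^r \cong G_{S'}$, and given a wonderful $G_{S'}$-variety $Y$ on which $P$ acts through this quotient, form the associated fiber bundle $X = G \times^P Y$, i.e.\ the quotient of $G \times Y$ by the $P$-action $p\cdot(g,y) = (gp^{-1}, py)$. This $X$ carries a natural $G$-action by left translation on the first factor, and the projection $G \times^P Y \to G/P$ is a locally trivial fiber bundle with fiber $Y$.

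Next I would check that $X$ so constructed is complete and non-singular: completeness follows since $G/P$ is projective and $Y$ is complete, and smoothness since both $G/P$ and $Y$ are smooth and the bundle is locally trivial. Then I would analyze the orbit structure. Because $P$ acts on $Y$ through $G_{S'} = P/P^r$, every $G_{S'}$-orbit $\mathcal{O} \subseteq Y$ is $P$-stable, so $G \times^P \overline{\mathcal{O}}$ is a well-defined $G$-stable closed subvariety of $X$, and these exhaust the $G$-orbit closures of $X$; the combinatorics of transversal intersections of the boundary divisors $D_1,\dots,D_r$ of $Y$ is inherited by their images $G \times^P D_i$ in $X$, so $X$ is wonderful of the same rank $r$ as $Y$. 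The open $G$-orbit of $X$ is $G \times^P (G_{S'}\text{-open orbit of }Y)$.

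The computation of $\s_X$ is the technical core. One picks the point $z \in X$ fixed by $B_-$: it is the image of $(e, z_Y)$ where $z_Y \in Y$ is fixed by the opposite Borel $(B_-)_{S'}$ of $G_{S'}$; one checks $B_-$ stabilizes this image since $B_- \subseteq P$ acts on $Y$ through $(B_-)_{S'}$. The stabilizer $P_X$ of the open $B$-orbit is the preimage in $G$ of $P_Y$ under $P \to G_{S'}$, extended by $P^r$; this yields $S^p_X = S^p \cap S' = (S')^p$ together with $S \smallsetminus S'$ contributing nothing new to $\Sigma$, giving $S^p_X = S^p$ once one notes $S^p \subseteq S'$ by hypothesis. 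For the spherical roots one uses the identification of tangent spaces: $T_z X / T_z(Gz) \cong T_{z_Y} Y / T_{z_Y}(G_{S'} z_Y)$ as $T$-modules, because the extra directions coming from $G/P$ lie in $T_z(Gz)$; hence $\Sigma_X = \Sigma' = \Sigma$ (again using $\supp \Sigma \subseteq S'$). Finally the colors: colors of $X$ moved by $\alpha \in S'$ correspond bijectively to colors of $Y$ moved by $\alpha$, whereas each $\alpha \in S \smallsetminus S'$ moves a single color of $b$-type that is absorbed into $\Delta^b$ without affecting $\A$, so $\A_X = \A' = \A$ with matching Cartan pairing. Assembling these identifications gives $\s_X = \s$.

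I expect the main obstacle to be the careful bookkeeping of the tangent space and color computations at $z$, specifically verifying that the $T$-weights appearing in $T_z X / T_z(Gz)$ are exactly those of $Y$ and that no spurious spherical roots or colors are introduced by the $G/P$-directions. This is essentially the content of \cite[\S3.4]{Lu01}, which I would invoke: the cleanest route is to cite Luna's result directly for the assertion $\s_{(G\times^P Y)} = \s$ (combined with Proposition~\ref{prop:locS} read in reverse, since $(G\times^P Y)_{S'} \cong Y$), rather than redo the local analysis. The remaining subtlety — ensuring the hypothesis $S^p \cup \supp\Sigma \subseteq S'$ is exactly what makes $(\s_X)_{S'} = \s_{S'}$ lift back to $\s_X = \s$ — follows because outside $S'$ the system $\s$ records only the purely parabolic data $S \smallsetminus S'$, which is faithfully reproduced by the $G/P$ factor.
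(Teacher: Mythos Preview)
The paper does not give its own proof of this proposition: it is stated with the attribution \cite[\S3.4]{Lu01} and used as a black box. Your proposal, which sketches the construction $X=G\times^P Y$ and the verification of $\s_X=\s$ before ultimately recommending to invoke Luna directly, is therefore consistent with the paper's treatment; there is nothing to compare against in the paper itself, and your sketch is a reasonable (if informal) expansion of what lies behind the citation.
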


Let $\s=(S^p,\Sigma,\A)$ be a spherical $G$-system with $S^p\cup\supp\Sigma=S$, then $\supp\Sigma$ and $S^p\smallsetminus\supp\Sigma$ are orthogonal, thus $G\cong G_{\supp\Sigma}\times G_{S^p\smallsetminus\supp\Sigma}$ and the second factor acts trivially on any wonderful $G$-variety $X$ with spherical system $\s$. Therefore, the previous proposition implies the following.

\begin{proposition}\label{prop:cuspidal}
Let $\s=(S^p,\Sigma,\A)$ be a spherical $G$-system and let $S'$ be a subset of simple roots containing $\supp\Sigma$. If there exists a wonderful $G_{S'}$-variety with spherical system $\s_{S'}$ then there exists a wonderful $G$-variety with spherical system $\s$. 
\end{proposition}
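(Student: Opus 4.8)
The plan is to derive Proposition~\ref{prop:cuspidal} directly from Proposition~\ref{prop:induction} together with the observation made immediately before its statement. First I would reduce to the case $S' = S^p \cup \supp\Sigma$: enlarging $S'$ can only help, since if $S'$ contains $\supp\Sigma$ then $S' \cup S^p$ still contains $\supp\Sigma$, and it suffices to produce a wonderful $G_{S' \cup S^p}$-variety with the appropriate localized spherical system and then apply the argument to that larger set. Concretely, I would set $S'' = S' \cup S^p$ and note $(\s_{S'})_{S'} = \s_{S'}$ while observing how $\s_{S'}$ and $\s_{S''}$ are related; this lets me assume without loss of generality that $S^p \subseteq S'$, i.e.\ $S^p \cup \supp\Sigma \subseteq S'$, which is exactly the hypothesis of Proposition~\ref{prop:induction}.

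Next I would invoke Proposition~\ref{prop:induction} with this $S'$ (now containing $S^p \cup \supp\Sigma$): from a wonderful $G_{S'}$-variety with spherical system $\s_{S'}$ it yields a wonderful $G$-variety $X = G \times^P Y$ with spherical system $\s$. The only remaining gap is the passage from the hypothesis actually given — a wonderful $G_{S'}$-variety with system $\s_{S'}$ for the \emph{original} $S'$ merely containing $\supp\Sigma$ — to a wonderful $G_{S''}$-variety with system $\s_{S''}$ for $S'' = S' \cup S^p$. Here I would use the remark preceding Proposition~\ref{prop:cuspidal}: since $\supp(\Sigma_{S''}) \cup (S'')^p$ may fail to be all of $S''$, but $(S'')^p \smallsetminus \supp\Sigma$ is orthogonal to $\supp\Sigma$, the group $G_{S''}$ splits as a product and parabolic induction along the extra $S^p$-directions produces the desired variety — this is precisely the content of the displayed reasoning in the paragraph before the proposition. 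Thus the chain is: wonderful $G_{S'}$-variety with $\s_{S'}$ $\rightsquigarrow$ wonderful $G_{S''}$-variety with $\s_{S''}$ (by Proposition~\ref{prop:induction} applied inside $G_{S''}$, whose $S'$ contains $(S'')^p \cup \supp\Sigma$ trivially since $\supp\Sigma \subseteq S'$ and $(S'')^p \subseteq S' \cup S^p = S''$... wait, one must check $(S'')^p \subseteq S'$, which holds iff $S^p \cap S'' \subseteq S'$; since $S^p \subseteq S''$ this says $S^p \subseteq S'$, not automatic) — so more carefully one applies Proposition~\ref{prop:induction} with ambient group $G_{S''}$ and subset $S'$, noting $S' \supseteq \supp\Sigma = \supp(\Sigma_{S''})$ and $S' \supseteq (S'')^p \cap S'$; the point is that $(S'')^p \smallsetminus S'$ consists of roots orthogonal to everything in $\Sigma$, handled by the product decomposition. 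Finally, apply Proposition~\ref{prop:induction} once more with ambient group $G$ and subset $S''$ to conclude.

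I expect the main obstacle to be purely bookkeeping: making precise the interplay between three subsets $\supp\Sigma \subseteq S' \subseteq S$ and the parabolic-type set $S^p$, and verifying that at each stage the hypothesis ``$S' \supseteq S^p \cup \supp\Sigma$'' of Proposition~\ref{prop:induction} is met, possibly after absorbing the orthogonal extra simple roots of $S^p$ into a product factor acting trivially. No genuinely new geometric input is needed — the proposition is a formal consequence of Proposition~\ref{prop:induction} and the orthogonality remark. A cleaner route, which I would adopt if the bookkeeping above gets awkward, is to observe that it suffices to prove the statement for $S' = S^p \cup \supp\Sigma$ (since for larger $S'$ one first applies this case inside $G_{S^p \cup \supp\Sigma}$ and then parabolic-induces up), and for this $S'$ the claim is \emph{exactly} the combination of Proposition~\ref{prop:induction} with the displayed product decomposition $G \cong G_{\supp\Sigma} \times G_{S^p \smallsetminus \supp\Sigma}$. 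In that formulation the proof is one short paragraph.
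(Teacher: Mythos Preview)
Your approach is the paper's: derive the proposition from Proposition~\ref{prop:induction} together with the orthogonality $\supp\Sigma \perp (S^p\smallsetminus\supp\Sigma)$. But your bookkeeping has a real snag, which you half-noticed and did not resolve. The step ``wonderful $G_{S'}$-variety $\rightsquigarrow$ wonderful $G_{S''}$-variety'' (with $S''=S'\cup S^p$) cannot in general be done by a product decomposition: the roots in $S^p\smallsetminus S'$ are orthogonal to $\supp\Sigma$, but they need \emph{not} be orthogonal to all of $S'$. For example, take $S=\{\alpha_1,\alpha_2,\alpha_3\}$ of type $\mathsf A_3$, $\Sigma=\{\alpha_1\}$, $S^p=\{\alpha_3\}$, $S'=\{\alpha_1,\alpha_2\}$; then $S^p\smallsetminus S'=\{\alpha_3\}$ is adjacent to $\alpha_2\in S'$, so $G_{S''}$ does not split off $G_{S'}$ as a direct factor. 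Your ``cleaner route'' also conflates $\supp\Sigma$ with $S^p\cup\supp\Sigma$ (for the latter, Proposition~\ref{prop:induction} applies directly and no product decomposition is needed) and never addresses the genuinely problematic case $S'\not\supseteq S^p$.

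The missing ingredient is localization (Proposition~\ref{prop:locS}). From the given wonderful $G_{S'}$-variety $Y$ with system $\s_{S'}$, first pass to the localization $Y_{\supp\Sigma}$, a wonderful $G_{\supp\Sigma}$-variety with system $(\s_{S'})_{\supp\Sigma}=\s_{\supp\Sigma}$. Now the product decomposition $G_{S^p\cup\supp\Sigma}\cong G_{\supp\Sigma}\times G_{S^p\smallsetminus\supp\Sigma}$ \emph{does} hold (this is precisely the displayed observation preceding the proposition), and letting the second factor act trivially on $Y_{\supp\Sigma}$ gives a wonderful $G_{S^p\cup\supp\Sigma}$-variety with system $\s_{S^p\cup\supp\Sigma}$. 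Finally, Proposition~\ref{prop:induction} applied with the subset $S^p\cup\supp\Sigma$ yields the desired wonderful $G$-variety with system $\s$. This three-step chain --- localize down to $\supp\Sigma$, extend trivially across the orthogonal $S^p$-factor, parabolically induce up to $G$ --- is what the paper's one-sentence justification compresses.
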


\begin{definition}
A spherical $G$-system $\s=(S^p,\Sigma,\A)$ is called {\em cuspidal} if $\supp\Sigma=S$.
\end{definition}

We end this section with another kind of localization, which corresponds in the geometrical setting to taking an irreducible $G$-stable subvariety of a wonderful $G$-variety. We refer again to \cite{Lu01} for details and proofs.

\begin{definition}
Let $\s=(S^p,\Sigma,\A_{\Sigma})$ be a spherical $G$-system and $\Sigma'$ a subset of $\Sigma$. The {\em localization} of $\s$ in $\Sigma'$ is the spherical system $\s_{\Sigma'}=(S^p,\Sigma',\A_{\Sigma'})$, where
\[
\A_{\Sigma'} = \bigcup_{\alpha\in S\cap\Sigma'} \Delta(\alpha)
\]
and the Cartan pairing of $\s_{\Sigma'}$ is the one of $\s$ restricted to $\Z\Sigma'$.
\end{definition}

\begin{proposition}[{\cite[\S3.2]{Lu01}}]\label{prop:locSigma}
Let $X$ be a wonderful $G$-variety, and $\Sigma'\subseteq \Sigma_X$. Then $X$ has a unique irreducible $G$-stable closed subvariety $Y$ (that is then automatically wonderful) whose set of spherical roots is $\Sigma'$. Moreover $\s_Y=\s_{\Sigma'}$. 
\end{proposition}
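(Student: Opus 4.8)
The statement to prove is Proposition~\ref{prop:locSigma}, asserting that given a wonderful $G$-variety $X$ and a subset $\Sigma'\subseteq\Sigma_X$ of its spherical roots, there is a unique irreducible $G$-stable closed subvariety $Y$ with $\Sigma_Y=\Sigma'$, and moreover $\s_Y=\s_{\Sigma'}$.

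The plan is to use the combinatorial description of $G$-orbit closures in a wonderful variety. Recall from Definition~\ref{def:wonderful} that the $G$-orbit closures of $X$ are exactly the transversal intersections $\bigcap_{i\in I}D_i$ for subsets $I\subseteq\{1,\ldots,r\}$, where $D_1,\ldots,D_r$ are the boundary prime $G$-divisors. So every irreducible $G$-stable closed subvariety is of this form, and they are in bijection with subsets $I$. First I would identify which intersection corresponds to which subset of spherical roots: the standard fact (see \cite{Lu01}) is that each $D_i$ corresponds to a spherical root $\sigma_i\in\Sigma_X$, via the point $z$ fixed by $B_-$ and the $B_-$-stable curves through $z$ meeting exactly one $D_i$; equivalently, $D_i$ is the divisor on which the $B$-eigenvalue $\sigma_i$ of the corresponding $B$-eigenvector in $\C(X)$ has nonzero valuation. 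Then the orbit closure $Y_I=\bigcap_{i\in I}D_i$ has, as its own boundary divisors, the traces $D_j\cap Y_I$ for $j\notin I$, and its spherical roots turn out to be $\{\sigma_j : j\notin I\}$. Hence to get $\Sigma_Y=\Sigma'$ one takes $I=\{i : \sigma_i\notin\Sigma'\}$, and uniqueness follows from the bijection between subsets $I$ and $G$-orbit closures, together with the fact that the map $i\mapsto\sigma_i$ is a bijection $\{1,\ldots,r\}\to\Sigma_X$.

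For the identification $\s_Y=\s_{\Sigma'}$, I would compute each of the three invariants of $Y=Y_I$ directly. The parabolic $P_Y=P_X$ (the open $B$-orbit of $Y$ is the trace of the open $B$-orbit of $X$, since $Y$ meets that orbit in a dense subset of $Y$ by the local structure of wonderful varieties near $z$), so $S^p_Y=S^p_X=S^p$; this matches $\s_{\Sigma'}$ which keeps the same $S^p$. The spherical roots of $Y$ are $\Sigma'$ by the orbit-closure computation above. For the set $\A_Y$ and the Cartan pairing, I would use the local model of $X$ around the fixed point $z$ of $B_-$: in suitable coordinates $X$ looks near $z$ like $P^-_X \times^{?}$ an affine space on which $T$ acts with weights $-\Sigma_X$ (plus the roots of $P^r$), and $Y_I$ is the coordinate subspace where the coordinates indexed by $I$ vanish. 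Restriction of $B$-eigenvectors in $\C(X)$ to $Y_I$ then identifies the colors of $Y_I$ with those colors of $X$ that do not contain $Y_I$, and one reads off that $\A_Y = \bigcup_{\alpha\in S\cap\Sigma'}\A_X(\alpha)$ with the Cartan pairing inherited by restriction to $\Z\Sigma'$ — which is precisely the definition of $\s_{\Sigma'}$. The compatibility of colors under restriction is exactly the content of \cite[\S3.2]{Lu01}, so I would cite it rather than redo it.

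The main obstacle is the precise bookkeeping of colors under passing to the subvariety $Y_I$: one must check that no two distinct colors of $X$ restrict to the same color of $Y_I$ and that every color of $Y_I$ arises this way, i.e.\ that restriction gives a bijection $\Delta_X\setminus\{D : Y_I\subseteq D\}\to\Delta_Y$, compatibly with the labelling by simple roots and with the Cartan pairings. This requires the local structure theorem for wonderful varieties and a careful analysis of $B$-stable divisors on $Y_I$ versus those on $X$; the weight computations themselves (valuations of $B$-eigenvectors, the formula $c_Y(D,\sigma)=c_X(D,\sigma)$ for $\sigma\in\Sigma'$) are then routine. Everything of substance here is already established in \cite[\S3.2]{Lu01}, so in the write-up I would present the argument as an assembly of: (i) the orbit-closure/divisor dictionary, (ii) $P_Y=P_X$, and (iii) the color-restriction statement from \loccit, concluding $\s_Y=\s_{\Sigma'}$ and uniqueness from the orbit-closure bijection.
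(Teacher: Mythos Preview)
The paper does not give its own proof of this proposition: it is simply quoted from \cite[\S3.2]{Lu01} and used as an input. Your sketch is correct and is essentially the argument one finds in Luna's paper (orbit-closure/divisor bijection from Definition~\ref{def:wonderful}, $P_Y=P_X$ via the local structure, and restriction of colors), so there is nothing to compare beyond noting that you have reconstructed the cited proof rather than an alternative one.
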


For later reference, we also recall how morphisms behave with respect to localizations in a subset of spherical roots. Let $\s=(S^p,\Sigma,\A_{\Sigma})$ be a spherical system and $\Sigma'\subseteq\Sigma$. Given a subset $\widetilde\Delta$ of the colors $\Delta$ of $\s$, we define the {\em restriction} of $\widetilde\Delta$ to $\s_{\Sigma'}$ in the following way (see also \cite[\S3.2]{BP11}).

Let us denote by $\Delta^b_1\subseteq\Delta^b$ the set of colors moved by only one simple root, and by $\Delta^b_2$ the set $\Delta^b\smallsetminus\Delta_1^b$. We also write $\Delta_{\Sigma'}$ for the whole set of colors of the spherical system $\s_{\Sigma'}$, and correspondingly $\Delta_{\Sigma'}(\alpha)$ denotes the colors of $\s_{\Sigma'}$ moved by $\alpha\in S$ (notice that $\Delta_{\Sigma'}(\alpha)$ is identified with $\Delta(\alpha)$ if $\alpha\in\Sigma'\cap S$).

Then we define the restriction of $\widetilde\Delta$ to $\s_{\Sigma'}$ as
\[
\widetilde\Delta|_{\Sigma'} = \left(\widetilde\Delta_{\Sigma',1}\right)\cup\left(\widetilde\Delta_{\Sigma',2}\right)\cup\left(\widetilde\Delta_{\Sigma',3}\right)\cup\left(\widetilde\Delta_{\Sigma',4}\right),
\]
where:
\[
\widetilde\Delta_{\Sigma',1} =  \left(\bigcup_{\alpha\in\Sigma'\cap S} \Delta(\alpha)\cap\widetilde\Delta \right) \cup \left(\bigcup_{\begin{array}{c}\scriptstyle{\alpha\in(\Sigma\smallsetminus\Sigma')\cap S }\\  \scriptstyle{\textrm{with } \Delta(\alpha)\subseteq \widetilde\Delta}\end{array}} \Delta_{\Sigma'}(\alpha)\right),
\]
\[
\widetilde\Delta_{\Sigma',2} =  \bigcup_{\begin{array}{c}\scriptstyle{\alpha\in\frac12\Sigma\cap S }\\  \scriptstyle{\textrm{with } \Delta(\alpha)\subseteq \widetilde\Delta}\end{array}} \Delta_{\Sigma'}(\alpha),
\]
\[
\widetilde\Delta_{\Sigma',3} =  \bigcup_{\begin{array}{c}\scriptstyle{\{\alpha\}\in\Delta^b_1 }\\  \scriptstyle{\textrm{with } \Delta(\alpha)\subseteq \widetilde\Delta}\end{array}} \Delta_{\Sigma'}(\alpha),
\]
\[
\widetilde\Delta_{\Sigma',4} =  \bigcup_{\begin{array}{c}\scriptstyle{\{\alpha,\beta\}\in\Delta^b_2 }\\  \scriptstyle{\textrm{with } \Delta(\alpha)=\Delta(\beta)\subseteq \widetilde\Delta}\end{array}} \left(\Delta_{\Sigma'}(\alpha)\cup\Delta_{\Sigma'}(\beta)\right).
\]

With this definition, taking quotients is compatible with localization in $\Sigma'$, as stated by the following lemma.
\begin{lemma}\cite[Lemma 2.6.1]{BP11}
Let $\widetilde\Delta$ be a distinguished set of colors of $\s$. 
Then the restriction $\widetilde\Delta|_{\Sigma'}$ is a distinguished set of colors of $\s_{\Sigma'}$, and we have
\begin{equation}
(\s/\widetilde\Delta)_{\Sigma''} = (\s_{\Sigma'}) / (\widetilde\Delta|_{\Sigma'}),
\end{equation}
where $\Sigma''$ consists of the elements of $\Sigma/\widetilde\Delta$ that are linear combinations of elements of $\Sigma'$.
\end{lemma}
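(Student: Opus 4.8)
The plan is to argue directly from the combinatorial definitions, in three stages. First I would record two elementary consequences of axiom (A1). (i) If $\gamma\in S\cap\Sigma$ and $\sigma\in\Sigma\smallsetminus\{\gamma\}$, then $\langle\gamma^\vee,\sigma\rangle=c(D^+_\gamma,\sigma)+c(D^-_\gamma,\sigma)\le0$: by (A1) each summand is at most $1$, and equals $1$ only if $\sigma\in S\cap\Sigma$, in which case $\langle\gamma^\vee,\sigma\rangle$ is an off-diagonal Cartan integer, so in all cases $\langle\gamma^\vee,\sigma\rangle\le0$. (ii) If $D\in\A$ is moved only by simple roots lying in $(\Sigma\smallsetminus\Sigma')\cap S$, then $c(D,\sigma)\le0$ for all $\sigma\in\Sigma'$ — it cannot be $1$ by (A1) and the definition of $\A(\alpha)$ — hence $c(D,\tau)\le0$ for all $\tau\in\IN\Sigma'$. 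I would also fix the dictionary of how a color of $\s$ gives rise to colors of $\s_{\Sigma'}$: a type-$a$ color still moved by some root of $\Sigma'\cap S$ survives unchanged; the pair $\A(\gamma)$ for $\gamma\in(\Sigma\smallsetminus\Sigma')\cap S$ merges into one type-$b$ color $D_\gamma$; a type-$2a$ color $D_\gamma$ becomes type-$b$ when $2\gamma\notin\Sigma'$; a type-$b$ color moved by two roots $\alpha,\beta$ splits into $D_\alpha,D_\beta$ when $\alpha+\beta\notin\Sigma'$; and a type-$a$ color moved only by roots outside $\Sigma'$ disappears. The four summands of $\widetilde\Delta|_{\Sigma'}$ are designed precisely to match these five cases, and in each of them the Cartan value is read off from $\langle\gamma^\vee,-\rangle$ (or $\frac12\langle\gamma^\vee,-\rangle$) and from $c(D^+_\gamma,-)+c(D^-_\gamma,-)=\langle\gamma^\vee,-\rangle$.

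\textbf{Distinguishedness of $\widetilde\Delta|_{\Sigma'}$.}
Fix positive integers $a_D$ ($D\in\widetilde\Delta$) realizing it, so the functional $u:=\sum_{D\in\widetilde\Delta}a_D\,c(D,-)$ is $\ge0$ on $\Sigma$, hence on $\Sigma'$. I would rewrite $u|_{\Z\Sigma'}$ as a positive rational combination of the $c_{\s_{\Sigma'}}(E,-)$, $E\in\widetilde\Delta|_{\Sigma'}$, plus a remainder that is $\le0$ on $\Sigma'$; then that combination is $\ge0$ on $\Sigma'$, and clearing denominators produces positive integers witnessing that $\widetilde\Delta|_{\Sigma'}$ is distinguished. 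Concretely: a surviving type-$a$ color $D$ contributes $a_D\,c_{\s_{\Sigma'}}(D,-)$; a type-$b$ or $2a$ color $D_\gamma\in\widetilde\Delta$ contributes a positive rational multiple of $c_{\s_{\Sigma'}}(D_\gamma,-)$ (or of its split pieces); for $\gamma\in(\Sigma\smallsetminus\Sigma')\cap S$ with $\A(\gamma)=\{D^+_\gamma,D^-_\gamma\}\subseteq\widetilde\Delta$ one uses $c_{\s_{\Sigma'}}(D_\gamma,-)=c(D^+_\gamma,-)+c(D^-_\gamma,-)$ and, with $m_\gamma=\min(a_{D^+_\gamma},a_{D^-_\gamma})$, splits $a_{D^+_\gamma}c(D^+_\gamma,-)+a_{D^-_\gamma}c(D^-_\gamma,-)$ as $m_\gamma\,c_{\s_{\Sigma'}}(D_\gamma,-)$ plus a nonnegative multiple of one further $c_{\s_{\Sigma'}}(E,-)$ or of a functional that is $\le0$ on $\Sigma'$ by (ii); and a disappearing type-$a$ color contributes a functional $\le0$ on $\Sigma'$ by (ii). One then checks that every $E\in\widetilde\Delta|_{\Sigma'}$ ends up with a strictly positive coefficient.

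\textbf{The equality $(\s/\widetilde\Delta)_{\Sigma''}=(\s_{\Sigma'})/(\widetilde\Delta|_{\Sigma'})$.}
Here I would compare the three ingredients separately. The $S^p$-parts agree because localization in a set of spherical roots leaves $S^p$ unchanged, so both reduce to comparing $\Delta(\alpha)\subseteq\widetilde\Delta$ with $\Delta_{\Sigma'}(\alpha)\subseteq\widetilde\Delta|_{\Sigma'}$, which follows from the dictionary above (cases $\alpha\in\Sigma'\cap S$, $\alpha\in(\Sigma\smallsetminus\Sigma')\cap S$, $\alpha$ of type $b$ or $2a$). For the spherical roots, both sides are the basis of a submonoid of $\IN\Sigma'$: on the left it is $\IN\Sigma''=\IN(\Sigma/\widetilde\Delta)\cap\IN\Sigma'$ (using that $\Sigma$ is linearly independent), and on the right it is $M':=\{\tau\in\IN\Sigma':c_{\s_{\Sigma'}}(E,\tau)=0\ \forall E\in\widetilde\Delta|_{\Sigma'}\}$. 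The inclusion $\IN\Sigma''\subseteq M'$ is immediate from $\langle\gamma^\vee,\tau\rangle=c(D^+_\gamma,\tau)+c(D^-_\gamma,\tau)$ and the companion identities. For $M'\subseteq\IN\Sigma''$, take $\tau\in M'$: the conditions coming from $\widetilde\Delta|_{\Sigma'}$ already force $c(D,\tau)=0$ for every $D\in\widetilde\Delta$ of type $b$ or $2a$, or of type $a$ and still moved by a root of $\Sigma'\cap S$; then, since $c(D,\tau)\le0$ for the remaining (deleted) type-$a$ colors by (ii) while $\sum_{D\in\widetilde\Delta}a_D c(D,\tau)\ge0$, all remaining terms must vanish, so $c(D,\tau)=0$ for all $D\in\widetilde\Delta$, i.e.\ $\tau\in\IN(\Sigma/\widetilde\Delta)\cap\IN\Sigma'=\IN\Sigma''$. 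Finally the remaining colors and the Cartan pairings of the two sides coincide, since in each case they are the colors of $\s$ outside $\widetilde\Delta$ moved by a simple root occurring in $\Sigma''$, together with the restriction of $c$ to those colors and to $\Z\Sigma''$.

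\textbf{Main obstacle.}
I expect the genuinely laborious part to be the bookkeeping in the last two stages: keeping consistent track of which colors of $\s$ are kept, merged, split, retyped or deleted in $\s_{\Sigma'}$, and checking that coefficients and Cartan values remain consistent through each operation — handling the four summands of $\widetilde\Delta|_{\Sigma'}$ simultaneously and coping with the fact that a single color of $\s_{\Sigma'}$ may arise in more than one of these ways. The one conceptual point that makes everything go through, used both in the distinguishedness argument and in the inclusion $M'\subseteq\IN\Sigma''$, is to exploit the strict positivity of the distinguishing coefficients together with lemma (ii) to force the contributions of the deleted type-$a$ colors to vanish.
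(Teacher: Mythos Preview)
The paper does not give its own proof of this lemma: it is simply quoted from \cite[Lemma~2.6.1]{BP11} and used as a black box. There is therefore nothing in the present paper to compare your argument against.

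That said, your outline is the natural direct verification and is essentially correct in structure. The two preliminary observations (i) and (ii) are exactly what one needs to control the type-$a$ colors that disappear under localization, and your monoid argument for the equality of spherical-root sets---in particular the step forcing the vanishing of the deleted colors via the positivity of the distinguishing coefficients combined with (ii)---is the right idea. One small point to be careful about in the distinguishedness step: when $\gamma\in(\Sigma\smallsetminus\Sigma')\cap S$ and only one of $D^\pm_\gamma$ lies in $\widetilde\Delta$, your recipe does not produce the merged color $D_\gamma$ in $\widetilde\Delta|_{\Sigma'}$ (nor should it, by the definition of $\widetilde\Delta_{\Sigma',1}$), and the contribution of the single surviving $D^\pm_\gamma$ must then be handled entirely via (ii) as a ``$\le0$ on $\Sigma'$'' term if it is no longer moved by any root of $\Sigma'\cap S$. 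Your sketch covers this, but it is worth spelling out, since it is precisely the case where a type-$a$ color of $\s$ neither survives as a type-$a$ color of $\s_{\Sigma'}$ nor contributes to a merged type-$b$ color.
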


\subsection{Decompositions into fiber product}\label{s:products}

After \cite{BP11} was published, D.~Luna pointed out a mistake in Section~4 therein: 
the first combinatorial condition of \cite[Definition~4.1.2]{BP11}, i.e.\ the definition of {\em decomposition} of a spherical system, is not equivalent to the equation (4) in \cite[\S4]{BP11}. The consequence is that \cite[Definition~4.1.2]{BP11} is not the exact combinatorial counterpart of the condition that a wonderful variety is a fiber product.

Here we correct that mistake by slightly revising \cite[Definition~4.1.2]{BP11}. We also show that all this does not affect our proof of Theorem~\ref{thm:luna} based on the lists of \cite[Theorems~2.10 and~2.12]{B09}, although they had been established using \cite[Definition~4.1.2]{BP11}.

Let $X_1$, $X_2$ and $X_3$ be wonderful $G$-varieties 
and let $\varphi_1\colon X_1\to X_3$ and $\varphi_2\colon X_2\to X_3$ be 
surjective equivariant maps with connected fibers. 
The fiber product $X=X_1\times_{X_3}X_2$ is a $G$-variety, 
with $\psi_1\colon X\to X_1$, $\psi_2\colon X\to X_2$ surjective equivariant maps with connected fibers 
such that $\varphi_1\circ\psi_1=\varphi_2\circ\psi_2$. It is not necessarily spherical. 

Let $Z_1$, $Z_2$ and $Z_3$ be the respective closed $G$-orbits, with the corresponding restricted maps $\varphi_1$ and $\varphi_2$. 

\begin{definition}[{\cite[Definition~4.1.1]{BP11}}]\label{def:prodgeom}
The $G$-variety $X_1\times_{X_3}X_2$ is called a {\em wonderful fiber product} 
if it is wonderful with closed $G$-orbit $Z_1\times_{Z_3}Z_2$.
\end{definition}

\begin{definition}\label{def:prod}
Let $\s=(S^p,\Sigma,\A)$ be a spherical $G$-system with set of colors $\Delta$. 
Two distinguished sets of colors $\Delta_1,\Delta_2$ are said to {\em decompose} $\s$ if:
\begin{enumerate}
\item \label{def:prod:Sp}
\begin{enumerate}
\item $S^p/\Delta_1 \cap S^p/\Delta_2$ is equal to $S^p$,
\item every connected component of $S^p/(\Delta_1\cup\Delta_2)$ is contained in either $S^p/\Delta_1$ or $S^p/\Delta_2$,
\end{enumerate}
\item \label{def:prod:Sigma} $\Sigma$ is included in $\Sigma/\Delta_1 \cup \Sigma/\Delta_2$.
\end{enumerate}
If $\s$ admits two non-empty such subsets of colors then $\s$ is called {\em decomposable}.
\end{definition}

We report now some useful statements that follow immediately from the above definition. Under the assumption (2) of Definition~\ref{def:prod} 
\begin{itemize}
\item[(P1)] there exist no $\sigma\in\Sigma$, $D_1\in\Delta_1$, $D_2\in\Delta_2$ 
such that both $c(D_1,\sigma)$ and $c(D_2,\sigma)$ are non-zero. 
\end{itemize}
This together with the assumption (1a) implies that 
\begin{itemize}
\item[(P2)] $\Delta_1$ and $\Delta_2$ are disjoint.
\end{itemize}
Moreover, $\Delta_2$ (resp.\ $\Delta_1$) is a distinguished set of colors of $\s/\Delta_1$ (resp.\ $\s/\Delta_2$), 
$\Delta_1\cup\Delta_2$ is a distinguished set of colors of $\s$ 
and 
\begin{itemize}
\item[(P3)] $(\s/\Delta_1)/\Delta_2=(\s/\Delta_2)/\Delta_1=\s/(\Delta_1\cup\Delta_2)$.  
\end{itemize}
In particular, 
\begin{itemize}
\item[(P4)] $S^p/(\Delta_1\cup\Delta_2)$ is disjoint union of $S^p$, $S^p/\Delta_1\smallsetminus S^p$ and $S^p/\Delta_2\smallsetminus S^p$. 
\end{itemize}

Before discussing the relationship between Definitions~\ref{def:prodgeom} and \ref{def:prod}, we recall the local structure of wonderful varieties (see e.g.\ \cite[Theorem 2.3]{Kn94}). Consider a wonderful $G$-variety $Y$ and the open subset $M_Y$ obtained removing from $Y$ all its colors. Recall that $Y$ is {\em toroidal}, i.e.\ no $G$-orbit is contained in a color. This implies that $M_Y$ is the union of all $B$-orbits of $Y$ that are dense in their respective $G$-orbits.

Now $M_Y$ has a closed subvariety isomorphic to $\C^{\Sigma_Y}$ such that the standard Levi subgroup $L$ of $P_Y$ containing $T$ acts linearly on $\C^{\Sigma_Y}$ via the spherical roots of $Y$, and the map
\[
P_Y^u\times \C^{\Sigma_Y} \to M_Y
\]
induced by the action is an isomorphism.

\begin{lemma}\label{lemma:local}
Let $f\colon Y\to Y'$ be a surjective morphism with connected fibers between two wonderful varieties. Then $f(M_Y)=M_{Y'}$, and there is a choice of the subvariety $\C^{\Sigma_Y}$ of $M_Y$ such that $f(\C^{\Sigma_Y})=\C^{\Sigma_{Y'}}$. Moreover, identifying $P_Y^u\times\{0\}$ (resp.\ $P_{Y'}^u\times\{0\}$) with its image in $M_Y$ (resp.\ $M_{Y'}$), we have $f(P_Y^u\times\{0\})=P_{Y'}^u\times\{0\}$.
\end{lemma}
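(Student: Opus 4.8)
The plan is to exploit the local structure recalled just before the statement, together with the combinatorial description of morphisms via distinguished sets of colors (Proposition~\ref{prop:morphisms}). The key remark is that $M_Y$ is intrinsically characterized inside $Y$: since $Y$ is toroidal, $M_Y$ is the complement of all colors of $Y$, equivalently the union of all $B$-orbits dense in their $G$-orbit. The morphism $f$ carries colors of $Y$ not dominating $Y'$ onto colors of $Y'$, and colors dominating $Y'$ (those in $\Delta_f$) have image equal to $Y'$, hence meeting $M_{Y'}$; so a point of $Y$ lies in $M_Y$ if and only if it avoids every color of $Y$, and one checks using the orbit description that such a point maps into $M_{Y'}$, while conversely the preimage of $M_{Y'}$ meets each $G$-orbit of $Y$ in its dense $B$-orbit. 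This gives $f(M_Y)=M_{Y'}$ and that $f$ restricts to a surjection $M_Y\to M_{Y'}$ with connected fibers.

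Next I would pass through the Levi. Write $P=P_Y$, $P'=P_{Y'}$ with standard Levi factors $L\supseteq T$, $L'\supseteq T$. Since $S^p_{Y'}=S^p_Y/\Delta_f\supseteq S^p_Y$ (the Levi of $Y'$ is no smaller), after the identifications of the local structure one has $P^u\supseteq (P')^u$ and a compatible surjection $L\to L'$ up to central isogeny; in particular the unipotent radical decomposes and the factor $P^u\times\{0\}\subseteq M_Y$ maps onto $(P')^u\times\{0\}\subseteq M_{Y'}$. The first two assertions of the lemma then amount to producing, inside the $L$-stable slice, a copy of $\C^{\Sigma_Y}$ mapping onto $\C^{\Sigma_{Y'}}$: here one uses that $L$ acts on the slice $\C^{\Sigma_Y}$ of $M_Y$ via the spherical roots, and that the spherical roots of $Y'$ form a sublattice basis obtained from those of $Y$ through $\Sigma_{Y'}=\Sigma_Y/\Delta_f$; concretely, restricting $f$ to the slice and using that $f$ is $L$-equivariant, the induced linear-ish map $\C^{\Sigma_Y}\to\C^{\Sigma_{Y'}}$ is, after a suitable choice of the slice (i.e.\ after an automorphism of $M_Y$ over $M_{Y'}$), a coordinate projection sending the $\C^{\Sigma_Y}$ onto $\C^{\Sigma_{Y'}}$.

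Concretely the steps are: (i) identify $M_Y$, $M_{Y'}$ with the toroidal complements of colors and check $f(M_Y)=M_{Y'}$ and that the restriction is surjective with connected fibers; (ii) relate $P_Y$ and $P_{Y'}$ and the Levi actions using $S^p_{Y'}=S^p_Y/\Delta_f$, deducing $f(P_Y^u\times\{0\})=P_{Y'}^u\times\{0\}$; (iii) analyze $f$ on the $L$-stable slice, using that $L$ acts through the spherical roots and that $\Sigma_{Y'}$ is the quotient of $\Sigma_Y$, to see that $f$ is a coordinate projection on an appropriate translate of the slice; (iv) adjust the choice of $\C^{\Sigma_Y}\subseteq M_Y$ by an automorphism fixing the image in $M_{Y'}$ so that $f(\C^{\Sigma_Y})=\C^{\Sigma_{Y'}}$ exactly. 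The main obstacle I expect is step (iii)--(iv): showing that the freedom in choosing the slice $\C^{\Sigma_Y}$ (which is only defined up to the action of $P^u$ and up to the unipotent part of the isotropy) is enough to make the restriction of $f$ a clean surjection onto the chosen slice of $M_{Y'}$, rather than merely a map whose image is some other slice; this requires knowing that $P_Y^u$ acts transitively on the set of admissible slices over a fixed slice downstairs, which follows from the local structure theorem but needs to be spelled out carefully.
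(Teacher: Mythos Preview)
Your argument for $f(M_Y)=M_{Y'}$ via the description of $M_Y$ as the union of $B$-orbits dense in their $G$-orbits is exactly the paper's argument.

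For the remaining two assertions, however, the paper takes a different and cleaner route than your steps (ii)--(iv). For $f(P_Y^u\times\{0\})=P_{Y'}^u\times\{0\}$, the paper bypasses any Levi analysis: it simply observes that $P_Y^u\times\{0\}$ is the intersection of $M_Y$ with the closed $G$-orbit of $Y$ (and likewise for $Y'$); since $f$ maps the closed orbit of $Y$ onto that of $Y'$ and $f(M_Y)=M_{Y'}$, the equality is immediate. This avoids the bookkeeping with $S^p_{Y'}=S^p_Y/\Delta_f$ that you propose.

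For the slice, the paper does \emph{not} start from an arbitrary $\C^{\Sigma_Y}$ and then try to adjust it. Instead it applies the local structure theorem twice, in a nested fashion. First, it considers the larger open set
\[
M_Y' = Y \smallsetminus \bigcup_{D\in\Delta_{Y'}} f^{-1}(D),
\]
and applies the local structure theorem with respect to $P_{Y'}$ (not $P_Y$): this gives $M_Y'\cong P_{Y'}^u\times Z$ with $Z=f^{-1}(\C^{\Sigma_{Y'}})$, a spherical $L_{Y'}$-variety. Second, it applies the local structure theorem to $Z$ itself (for the group $L_{Y'}$), obtaining a slice $V\subseteq M_Z$ with $(P_Y^u\cap L_{Y'})\times V\cong M_Z$. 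One then checks that $M_Y$ is the image of $P_{Y'}^u\times M_Z$, so $V$ serves as a choice of $\C^{\Sigma_Y}$; by construction $V\subseteq Z=f^{-1}(\C^{\Sigma_{Y'}})$, whence $f(V)\subseteq\C^{\Sigma_{Y'}}$, and equality follows because $f(V)$ meets every $T$-orbit of $\C^{\Sigma_{Y'}}$.

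The advantage of the paper's approach is that it produces the slice already sitting inside $f^{-1}(\C^{\Sigma_{Y'}})$, so no adjustment is needed. Your proposed step (iv), moving the slice by a $P_Y^u$-action to force its image into $\C^{\Sigma_{Y'}}$, would require verifying that the resulting translate is still a valid slice for the local structure of $M_Y$; this is precisely the obstacle you flagged, and the paper's nested construction sidesteps it entirely.
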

\begin{proof}
The fact that $f(M_Y)=M_{Y'}$ follows from the fact that $Y$ and $Y'$ have finitely many $B$-orbits and that $M_Y$ (resp.\ $M_Y'$) is the union of all $B$-orbits of $Y$ (resp.\ $M_{Y'}$) that are dense in their respective $G$-orbits.

Define
\[
M_Y' = Y\smallsetminus \bigcup_{D\in \Delta_{Y'}} f^{-1}(D).
\]
Then the map
\[
P_{Y'}^u\times Z\to  M_Y'
\] 
induced by the action is an isomorphism, and $Z=f^{-1}(\C^{\Sigma_{Y'}})$ is a spherical $L_{Y'}$-variety where $L_{Y'}$ is the standard Levi subgroup of $P_{Y'}$ containing $T$ (see also \cite[Remark 3.5.3]{Lo09}).

Define now $M_Z$ as $Z$ without all its colors (with respect to the action of $L_{Y'}$ and its Borel subgroup $L_{Y'}\cap B$). Again by \cite[Theorem 2.3]{Kn94} there exists a closed subvariety $V$ of $M_Z$ such that the map
\[
(P_Y^u\cap L_{Y'})\times V \to M_Z
\]
induced by the product is an isomorphism. Since $M_Y$ is the image of $P_{Y'}^u\times M_Z$, we can choose $V$ as the subvariety $\C^{\Sigma_Y}$ of $M_Y$, so that $f(\C^{\Sigma_Y})\subseteq \C^{\Sigma_{Y'}}$. In addition $f(\C^{\Sigma_Y})$ intersects all $G$-orbits of $Y'$, thus all $T$-orbits of $\C^{\Sigma_{Y'}}$, therefore $f(\C^{\Sigma_Y})=\C^{\Sigma_{Y'}}$.

The last equality is true because $P_{Y}^u\times\{0\}$ is the intersection of $M_Y$ with the closed $G$-orbit of $Y$, and the same holds in $Y'$.
\end{proof}

\begin{proposition}\label{prop:products}\
\begin{enumerate}
\item\label{prop:products:productdecomposes}
Let $X_1$, $X_2$ and $X_3$ be wonderful $G$-varieties 
with $\varphi_1\colon X_1\to X_3$ and $\varphi_2\colon X_2\to X_3$ 
surjective equivariant maps with connected fibers.
If $X=X_1\times_{X_3}X_2$ is a wonderful fiber product, and denoting $\psi_1\colon X\to X_1$ and $\psi_2\colon X\to X_2$ the corresponding maps, then the distinguished sets of colors $\Delta_{\psi_1}$ and $\Delta_{\psi_2}$ decompose $\s_X$.
\item\label{prop:products:decomposedisproduct}
Let $\s=(S^p,\Sigma,\A)$ be a spherical $G$-system with set of colors $\Delta$, 
and let $\Delta_1,\Delta_2$ be distinguished subsets of $\Delta$ that decompose $\s$. 
If $X_1$ and $X_2$ are wonderful $G$-varieties 
with spherical system $\s/\Delta_1$ and $\s/\Delta_2$, respectively,
with their surjective equivariant maps with connected fibers 
onto a wonderful $G$-variety $X_3$ with spherical system $\s/(\Delta_1\cup\Delta_2)$,
then the $G$-variety $X_1\times_{X_3}X_2$ is a wonderful fiber product with spherical system $\s$.
\end{enumerate}
\end{proposition}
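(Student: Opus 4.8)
For part~(\ref{prop:products:productdecomposes}), suppose $X=X_1\times_{X_3}X_2$ is wonderful with closed $G$-orbit $Z_1\times_{Z_3}Z_2$; let $\psi_i\colon X\to X_i$ be the projections and $\pi=\varphi_1\circ\psi_1=\varphi_2\circ\psi_2\colon X\to X_3$. These are surjective with connected fibers, so $\Delta_{\psi_i}$ and $\Delta_\pi$ are distinguished, with $\s_{X_i}=\s_X/\Delta_{\psi_i}$ and $\s_{X_3}=\s_X/\Delta_\pi$. The first point is that $\Delta_\pi=\Delta_{\psi_1}\cup\Delta_{\psi_2}$: the inclusion $\supseteq$ is immediate, and if a color $D$ of $X$ lay in $\Delta_\pi$ but in neither $\Delta_{\psi_i}$, then its dense $B$-orbit $\mathcal O_D$ would map onto a $B$-orbit of codimension $\geq1$ in each $X_i$ while $\pi$ maps it onto the open $B$-orbit of $X_3$; since $(\psi_1,\psi_2)\colon X\hookrightarrow X_1\times X_2$ is a closed immersion and the two orbit maps are fibrations over that open $B$-orbit, one would get $\dim X-1=\dim\mathcal O_D\le\dim\psi_1(\mathcal O_D)+\dim\psi_2(\mathcal O_D)-\dim X_3\le\dim X-2$, a contradiction. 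Condition~(2) I would deduce from the local structure: at the $B_-$-fixed point $z=(z_1,z_2)$ of $X$, smoothness of fiber products of smooth varieties gives $T_zX=T_{z_1}X_1\times_{T_{z_3}X_3}T_{z_2}X_2$, and, since $Gz=Z_1\times_{Z_3}Z_2$ and each $\varphi_i$ restricts to a smooth surjection $Z_i\to Z_3$, also $T_z(Gz)=T_{z_1}Z_1\times_{T_{z_3}Z_3}T_{z_2}Z_2$; taking $T$-equivariant quotients yields $N_X\cong N_{X_1}\times_{N_{X_3}}N_{X_2}$ (writing $N$ for the normal space to the closed orbit at the fixed point), whence a nonzero $T$-weight space of $N_X$ forces a nonzero weight space of $N_{X_1}$ or of $N_{X_2}$, i.e.\ $\Sigma_X\subseteq\Sigma_{X_1}\cup\Sigma_{X_2}$. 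Finally, the closed orbits are flag varieties $Z_i=G/Q_i$ with $Q_i$ parabolic of type $S^p_{X_i}$ and $Q_i\subseteq Q_3$, and the hypothesis that $Z_1\times_{Z_3}Z_2$ is a single (closed, $G$-homogeneous) orbit forces $Q_3=Q_1Q_2$ and the closed orbit of $X$ to be $G/(Q_1\cap Q_2)$; using that for parabolics containing a common Borel the product $Q_1Q_2$ is again a parabolic exactly when every connected component of $S^p_{X_1}\cup S^p_{X_2}$ lies in $S^p_{X_1}$ or in $S^p_{X_2}$, and then has type $S^p_{X_1}\cup S^p_{X_2}$ while $Q_1\cap Q_2$ has type $S^p_{X_1}\cap S^p_{X_2}$, one gets $S^p_{X_3}=S^p_X/(\Delta_{\psi_1}\cup\Delta_{\psi_2})=S^p_{X_1}\cup S^p_{X_2}$ with the component property, and $S^p_X=S^p_{X_1}\cap S^p_{X_2}$ — that is, conditions~(1a) and~(1b). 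Hence $\Delta_{\psi_1},\Delta_{\psi_2}$ decompose $\s_X$.

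For part~(\ref{prop:products:decomposedisproduct}), the maps $\varphi_i\colon X_i\to X_3$ exist because $\Delta_2$ (resp.\ $\Delta_1$) is a distinguished set of colors of $\s/\Delta_1$ (resp.\ $\s/\Delta_2$) with quotient $\s/(\Delta_1\cup\Delta_2)$ by~(P3), so Proposition~\ref{prop:morphisms} and the injectivity half of Theorem~\ref{thm:luna} provide them; put $X=X_1\times_{X_3}X_2$, which is complete being closed in $X_1\times X_2$, with projections $\psi_i$ and $\pi=\varphi_i\circ\psi_i$. The crux is to describe $\pi^{-1}(M_{X_3})$: running the argument in the proof of Lemma~\ref{lemma:local} one gets $\varphi_i^{-1}(M_{X_3})\cong P^u_{X_3}\times F_i$ with $F_i:=\varphi_i^{-1}(\C^{\Sigma_{X_3}})$ a spherical $L_{X_3}$-variety, hence $\pi^{-1}(M_{X_3})\cong P^u_{X_3}\times\bigl(F_1\times_{\C^{\Sigma_{X_3}}}F_2\bigr)$; applying the local structure once more to the $F_i$ reduces the determination of $F_1\times_{\C^{\Sigma_{X_3}}}F_2$ to that of a fiber product of affine spaces modelled on $\C^{\Sigma_{X_1}}\times_{\C^{\Sigma_{X_3}}}\C^{\Sigma_{X_2}}$, the maps $\C^{\Sigma_{X_i}}\to\C^{\Sigma_{X_3}}$ being monomial (the $\tau$-coordinate, $\tau\in\Sigma_{X_3}\subseteq\IN\Sigma_{X_i}$, pulling back to the unique monomial of weight $\tau$), together with an analogous fiber product of the unipotent parts. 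Using conditions~(1b) and~(2) of Definition~\ref{def:prod}, in the form of the consequences (P1)--(P4), I would show that this affine fiber product is irreducible and $T$-equivariantly isomorphic to $\C^{\Sigma}$ with $T$ acting through the weights $\Sigma$, and that the unipotent piece is the unipotent radical of the parabolic $P$ of type $S^p$. Granting this, $\pi^{-1}(M_{X_3})\cong P^u\times\C^{\Sigma}$ is an irreducible open affine subset of $X$ meeting every $G$-orbit; it follows that $X$ is irreducible of dimension $\dim X_1+\dim X_2-\dim X_3$, that $G\cdot\pi^{-1}(M_{X_3})=X$, and, by the local-structure characterization of wonderful varieties, that $X$ is wonderful with closed orbit $Z_1\times_{Z_3}Z_2$. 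To identify $\s_X$: the closed orbit gives $S^p_X=S^p_{X_1}\cap S^p_{X_2}=S^p$, the big cell gives $\Sigma_X=\Sigma$, and $\Delta_X\smallsetminus(\Delta_{\psi_1}\cup\Delta_{\psi_2})$ is identified with $\Delta_{X_3}$ (again because $\Delta_\pi=\Delta_{\psi_1}\cup\Delta_{\psi_2}$ by the orbit-dimension argument of part~(\ref{prop:products:productdecomposes})), so, since $\psi_i$ realizes the quotient $\s\to\s/\Delta_i$, the colors and the restricted Cartan pairing of $X$ are forced to be those of $\s$ by~(P1); thus $\s_X=\s$.

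The main obstacle is the toric step in part~(\ref{prop:products:decomposedisproduct}), namely proving that $\C^{\Sigma_{X_1}}\times_{\C^{\Sigma_{X_3}}}\C^{\Sigma_{X_2}}$ is irreducible and equal to an affine space carrying the prescribed $T$-action. This is exactly where the revision of \cite[Definition~4.1.2]{BP11} is essential: condition~(2) prevents the monomial relations cutting out the fiber product from producing extra components, and the refined condition~(1b) is what makes the unipotent parts assemble correctly, so this point must be argued by hand rather than invoked as a general principle. Everything else — smoothness, connectedness of fibers, and the bookkeeping of the colors and the Cartan pairing — then follows from the corresponding features of $X_1$, $X_2$, $X_3$ and from Lemma~\ref{lemma:local}.
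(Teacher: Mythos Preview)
Your approach is essentially the paper's. In part~(1) you obtain $\Sigma_X\subseteq\Sigma_{X_1}\cup\Sigma_{X_2}$ via the normal space at the $B_-$-fixed point, whereas the paper uses Lemma~\ref{lemma:local} and the identification $\C^{\Sigma_X}=\C^{\Sigma_{X_1}}\times_{\C^{\Sigma_{X_3}}}\C^{\Sigma_{X_2}}$; your route is slightly more direct. Note however that you only need the weight-space inclusion, not the full isomorphism $N_X\cong N_{X_1}\times_{N_{X_3}}N_{X_2}$ you assert --- the latter is not obvious, since the induced maps $N_{X_i}\to N_{X_3}$ need not be surjective weight-by-weight (elements of $\Sigma_{X_3}$ may fail to lie in $\Sigma_{X_i}$).

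The one place you are too optimistic is the identification $\A_X=\A$ at the end of part~(2): calling it ``bookkeeping'' that follows from~(P1) undersells it. Knowing $S^p_X=S^p$, $\Sigma_X=\Sigma$ and the two quotients $\s_X/\Delta_{\psi_i}=\s/\Delta_i$ does not by itself pin down $\A_X$ and its Cartan pairing. The paper argues first that $\A_X$ is the amalgam of $\A_{X_1}$ and $\A_{X_2}$ along $\A_{X_3}$ (each $\alpha\in S\cap\Sigma$ lies in some $\Sigma_{X_i}$, and Proposition~\ref{prop:morphisms} identifies $\A_X(\alpha)$ with $\A_{X_i}(\alpha)$), and then checks $c_X(D,\sigma)=c(D,\sigma)$ in two cases: when $D\in\A_{X_i}$ and $\sigma\in\Sigma_{X_i}$ this comes from the quotient, but in the ``cross'' case $D\in\A_X(\alpha)$ with $\alpha\in\Sigma_{X_i}\smallsetminus\Sigma_{X_{3-i}}$ and $\sigma\in\Sigma_{X_{3-i}}\smallsetminus\Sigma_{X_i}$ one must argue that for both $\s$ and $\s_X$ exactly one of the two colors moved by $\alpha$ lies in $\Delta_{3-i}$ (resp.\ $\Delta_{\psi_{3-i}}$), that this color is zero on $\sigma$, and hence that axiom~(A2) forces the other to take the value $\langle\alpha^\vee,\sigma\rangle$. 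This is a genuine, if short, case analysis rather than a formality.
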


\begin{proof}
We prove part (\ref{prop:products:productdecomposes}). First observe that the subset of colors of $X$ mapped dominantly onto $X_3$ is $\Delta_{\psi_1}\cup\Delta_{\psi_2}$.

The closed $G$-orbit of $X$ is $G/P_{X}=Z_1\times_{Z_3}Z_2$ where $Z_i=G/P_{X_i}$ is the closed $G$-orbit of $X_i$ for $i\in\{1,2,3\}$.  Comparing stabilizers of the base points we deduce that $P_X=P_{X_1}\cap P_{X_2}$, therefore $S^p/\Delta_{\psi_1}\cap S^p/\Delta_{\psi_2}=S^p_X$.

From the fact that $Z_1\times_{Z_3}Z_2$ is a single $G$-orbit we deduce that $P_{X_3}=P_{X_1}P_{X_2}$, and this implies that $L_3=L_1L_2$ where for all $i\in\{1,2,3\}$ we denote by $L_i$ the standard Levi subgroup of $P_{X_i}$ containing $T$. This is only possible if each simple factor of the commutator of $L_3$ is contained either in $L_1$ or in $L_2$, which translates into the condition that every connected component of $S^p/(\Delta_{\psi_1}\cup\Delta_{\psi_2})$ is contained in either $S^p/\Delta_{\psi_1}$ or $S^p/\Delta_{\psi_2}$.

Property (\ref{def:prod:Sp}) of Definition~\ref{def:prod} is proven, let us show property (\ref{def:prod:Sigma}). Thanks to Lemma~\ref{lemma:local} the open chart $M_X$ of the local structure of $X$ is $M_{X_1}\times_{M_{X_3}}M_{X_2}$, and we have
\[
\C^{\Sigma_X} = \C^{\Sigma_{X_1}}\times_{\C^{\Sigma_{X_3}}}\C^{\Sigma_{X_2}},
\]
which is equivalent to
\[
\C[\IN\Sigma]=\C[\IN\Sigma_{X_1}]\otimes_{\C[\IN\Sigma_{X_3}]}\C[\IN\Sigma_{X_2}].
\]
This implies $\Sigma\subseteq \Sigma_{X_1}\cup\Sigma_{X_2}$, which is the desired inclusion.

We prove part (\ref{prop:products:decomposedisproduct}). 
For $i\in\{1,2\}$ write $\Sigma/\Delta_i=\Sigma_i^b\cup\Sigma_i'$ 
where $\Sigma_i^b=(\Sigma/\Delta_i)\cap\Sigma$ and $\Sigma_i'=(\Sigma/\Delta_i)\smallsetminus\Sigma$. 
Then we have $\Sigma_i'\subseteq\IN(\Sigma\smallsetminus\Sigma_i^b)$, 
which is a consequence of the fact that $\IN(\Sigma/\Delta_i)$ is by definition 
equal to the intersection of $\IN\Sigma$ with the kernels of all colors of $\Delta_i$.

At this point property (\ref{def:prod:Sigma}) of Definition~\ref{def:prod} implies
\begin{equation}\label{eqn:Sigma}
\Sigma_i'\subseteq\IN(\Sigma_{3-i}^b)
\end{equation}
for all $i\in\{1,2\}$, and again by definition of $\Sigma/\Delta_i$ we have
\begin{equation}\label{eqn:intersection}
\IN(\Sigma/(\Delta_1\cup\Delta_2))=\IN(\Sigma/\Delta_1)\cap\IN(\Sigma/\Delta_2).
\end{equation}

Set $\Delta_3=\Delta_1\cup\Delta_2$.
Now consider the inclusions of rings  $\C[\IN(\Sigma/\Delta_i)]\hookrightarrow\C[\IN\Sigma]$ for all $i\in\{1,2,3\}$ and the induced natural map
\[
\varphi\colon\C[\IN(\Sigma/\Delta_1)]\otimes_{\C[\IN(\Sigma/\Delta_3)]}\C[\IN(\Sigma/\Delta_2)]\to \C[\IN\Sigma]
\]
Thanks to property (\ref{def:prod:Sigma}) of Definition~\ref{def:prod} the map $\varphi$ is surjective; using (\ref{eqn:Sigma}) and (\ref{eqn:intersection}) it is elementary to show that $\varphi$ is also injective. We conclude that there is a $T$-equivariant isomorphism
\begin{equation}\label{eqn:localstr1}
\C^\Sigma\cong\C^{\Sigma/\Delta_1}\times_{\C^{\Sigma/\Delta_3}}\C^{\Sigma/\Delta_2}.
\end{equation}

We turn now to $P^u$, where $P$ is the parabolic subgroup of $G$ containing $B$ and associated with $S^p$. Thanks to condition (\ref{def:prod:Sp}) of Definition~\ref{def:prod} we have that $P_{X_3}^u=P_{X_1}^u\cap P_{X_2}^u$, and $P^u$ is the product of its two normal subgroups $P_{X_1}^u$ and $P_{X_2}^u$. In other words
\begin{equation}\label{eqn:localstr2}
P^u\cong P_{X_1}^u\times_{P_{X_3}^u}P_{X_2}^u.
\end{equation}

Consider now the variety $X=X_1\times_{X_3}X_2$. It has an open subset equal to $M=M_{X_1}\times_{M_{X_3}}M_{X_2}$; thanks to Lemma~\ref{lemma:local} the isomorphisms (\ref{eqn:localstr1}) and (\ref{eqn:localstr2}) induce a $TP^u$-equivariant isomorphism
\begin{equation}\label{eqn:M}
M\cong P^u\times \C^\Sigma
\end{equation}
where $T$ acts by conjugation on the first factor. Notice that the standard Levi subgroup $L$ of $P$ containing $T$ is the intersection of the standard Levi subgroups of $P_{X_1}$ and of $P_{X_2}$; it follows that the isomorphism (\ref{eqn:M}) is also $P$-equivariant, where $L$ acts by conjugation on the factor $P^u$ and the commutator $(L,L)$ acts trivially on $\C^\Sigma$.

Therefore $M$ has an open $B$-orbit, and $X$ is a spherical variety.  It remains to show that $X$ is wonderful, with spherical system $\s$.

First notice that any color $D$ of $X$ doesn't intersect $M$, whence $D$ is mapped not dominantly on $X_i$ for some $i\in\{1,2\}$. As a consequence $D$ doesn't contain any $G$-orbit of $X$, because $X_1$ and $X_2$ are toroidal, in other words $X$ is toroidal too. It also follows that $M$ intersects all $G$-orbits of $X$, hence $X$ is smooth and {\em simple}, which means by definition that it has a unique closed $G$-orbit.

To sum up $X$ is a complete, smooth, simple, toroidal spherical variety: then it is wonderful (see e.g.\ \cite[Theorem 30.15]{Ti11}). The open subset of its local structure is $M$, therefore $S^p_X=S^p$ and $\Sigma_X=\Sigma$. It remains to prove that $\A_X$ can be identified with $\A$ compatibly with the Cartan pairing.

First of all Proposition~\ref{prop:morphisms} implies that there is a bijection between $\A_{X_3}$ with a subset of $\A_{X_i}$ for all $i\in\{1,2\}$, and a bijection of $\A_{X_i}$ with a subset of $\A_X$, such that these bijections preserve the property of being moved by a simple root and are compatible with the Cartan pairings. Considering that a simple root that is also a spherical root moves exactly two colors, and that $\Sigma_X\subseteq \Sigma_{X_1}\cup\Sigma_{X_2}$, we conclude that $\A_X$ is the union of $\A_{X_1}$ and $\A_{X_2}$ where we identify $D_1\in\A_{X_1}$ with $D_2\in\A_{X_2}$ if $D_1$ and $D_2$ are the same color of $\A_{X_3}$

By properties (P2) and (P3) the same is true for $\A$ in place of $\A_X$, inducing an identification of $\A$ with $\A_X$ such that $\Delta_1\cap\A$ and $\Delta_2\cap\A$ correspond to the colors of $\A_X$ mapped dominantly onto resp.\ $X_1$ and $X_2$.

Let $D$ be a color of $\A_X$ and $\sigma\in\Sigma_X$. If $D\in\A_{X_i}$ and $\sigma\in\Sigma_{X_i}$ for some $i\in\{1,2\}$, then $c(D,\sigma)=c_{X_i}(D,\sigma)$ by definition of quotient spherical system, and $c_{X_i}(D,\sigma)=c_X(D,\sigma)$ by Proposition~\ref{prop:morphisms}. The equality $c(D,\sigma)=c_X(D,\sigma)$ follows.

Finally, we have to show the same equality when $\sigma\notin\Sigma_{X_i}$, and in this case $\sigma\in\Sigma_{X_{3-i}}$, which implies that any color in $\Delta_{3-i}$ is zero on $\sigma$. Let $\alpha\in\Sigma_{X_i}$ be a simple root moving $D$; we may assume that $\alpha\notin\Sigma_{X_{3-i}}$. Then $\Delta_{3-i}$ contains some color moved by $\alpha$.

To summarize, at least one of the two colors of $\s$ moved by $\alpha$ is zero on $\sigma$, and if one has a non-zero value then it is the only one not in $\Delta_{3-i}$ and its value on $\sigma$ is $\langle\alpha^\vee,\sigma\rangle$. The same holds for the colors of $X$ moved by $\alpha$, thanks to the same argument where we replace $\Delta_i$ by the set of colors of $X$ mapped dominantly onto $X_i$. This proves $c(D,\sigma)=c_X(D,\sigma)$.
\end{proof}


We conclude the section showing that Definition~\ref{def:prod} is equivalent to \cite[Definition~4.1.2]{BP11} 
and \cite[Definition~2.1]{B09} under further mild assumptions, which are in particular satisfied by primitive spherical systems and primitive positive 1-combs (see Definition~\ref{def:primitive}) of rank $>1$.

\begin{lemma}\label{lemma:equivalent}
Let $\s=(S^p,\Sigma,\A)$ be a spherical $G$-system with set of colors $\Delta$. 
Assume it is cuspidal and of rank $>1$.
The spherical system $\s$ is decomposable if and only if
there exist distinguished subsets $\Delta_1$ and $\Delta_2$ of $\Delta$ such that
\begin{itemize}
\item[(1')] ($S^p/\Delta_1\smallsetminus S^p) \perp (S^p/\Delta_2\smallsetminus S^p$),
\item[(2)] $\Sigma\subset(\Sigma/\Delta_1\cup\Sigma/\Delta_2)$. 
\end{itemize}
\end{lemma}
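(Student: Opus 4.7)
The forward direction is a direct check. Assuming $\Delta_1,\Delta_2$ decompose $\s$ in the sense of Definition~\ref{def:prod}, set $A=S^p/\Delta_1\smallsetminus S^p$ and $B=S^p/\Delta_2\smallsetminus S^p$; these are disjoint by condition~(\ref{def:prod:Sp}a). Any edge in the Dynkin diagram between an $\alpha\in A$ and a $\beta\in B$ would place them in the same connected component of $S^p/(\Delta_1\cup\Delta_2)$, contradicting condition~(\ref{def:prod:Sp}b) since $\alpha\notin S^p/\Delta_2$ and $\beta\notin S^p/\Delta_1$. Hence $A\perp B$, i.e.\ (1').

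For the converse, assume $\Delta_1,\Delta_2$ are non-empty distinguished sets satisfying (1') and (2), and derive (\ref{def:prod:Sp}a) and (\ref{def:prod:Sp}b) for the same pair. Condition~(\ref{def:prod:Sp}a) is immediate: any element of $(S^p/\Delta_1\cap S^p/\Delta_2)\smallsetminus S^p$ would lie in $A\cap B$, and (1') would force $\langle\alpha^\vee,\alpha\rangle=0$, absurd. As a preparation, I then show $S^p/(\Delta_1\cup\Delta_2)=S^p\sqcup A\sqcup B$. For $\alpha$ in the left set and not in $S^p$, the nonempty $\Delta(\alpha)$ lies in $\Delta_1\cup\Delta_2$; if $|\Delta(\alpha)|=1$ the inclusion is in one of the $\Delta_i$ trivially, and if $\alpha\in\Sigma\cap S$ then axioms (A1) and (A2) give $c(D^+_\alpha,\alpha)=c(D^-_\alpha,\alpha)=1$, so by (2) the root $\alpha$ lies in $\Sigma/\Delta_i$ for some $i$, whence neither $D^\pm_\alpha$ can belong to $\Delta_i$, and therefore $\Delta(\alpha)\subseteq\Delta_{3-i}$.

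For (\ref{def:prod:Sp}b), assume by contradiction that some connected component $C$ of $S^p/(\Delta_1\cup\Delta_2)$ contains both $\alpha_0\in A$ and $\beta_0\in B$. Since $A\perp B$ by (1'), the component $C$ must contain a bridging vertex $\gamma\in S^p$ adjacent to some $\alpha\in A$. By cuspidality, $\gamma\in\supp\sigma$ for some $\sigma\in\Sigma$; and because $\gamma\in S^p$ we have $\gamma\notin\Sigma\cup\tfrac12\Sigma$, so $|\supp\sigma|\geq 2$. By (2) we may assume $\sigma\in\Sigma/\Delta_1$. Since $\Delta(\alpha)\subseteq\Delta_1$, combining $c(D,\sigma)=0$ for all $D\in\Delta(\alpha)$ with the explicit formulas for the full Cartan pairing forces $\langle\alpha^\vee,\sigma\rangle=0$. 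Writing $\sigma=\sum_{\delta\in S} n_\delta\,\delta$ with $n_\gamma>0$, $\langle\alpha^\vee,\gamma\rangle<0$, and $\langle\alpha^\vee,\delta\rangle\leq 0$ for $\delta\neq\alpha$, one concludes $n_\alpha>0$, i.e.\ $\alpha\in\supp\sigma$. The symmetric analysis applied to a bridging vertex adjacent to $B$ yields a spherical root whose support enters $B$; combining this information with the connectedness of $\supp\sigma$ for each spherical root $\sigma$, the rank $>1$ hypothesis, and the rank-$1$ constraints enforced by axiom (S), the bridging configuration is ruled out, giving the required contradiction.

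The main obstacle is closing the last step: one must exclude that supports of spherical roots in $\Sigma/\Delta_1$ and in $\Sigma/\Delta_2$ can simultaneously extend through a common $S^p$-bridge between $A$ and $B$, despite the orthogonality $A\perp B$ provided by (1'). The hypothesis rank $>1$ is what ensures that $\Sigma$ is rich enough to carry this obstruction, while cuspidality guarantees that every bridging vertex $\gamma\in S^p$ does sit in the support of some spherical root to which the preceding analysis can be applied.
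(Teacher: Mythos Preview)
Your forward direction is fine and matches the paper. The gap is in the converse, and it is not just a matter of tightening the final paragraph: the strategy of showing that \emph{the given pair} $(\Delta_1,\Delta_2)$ satisfies condition~(1b) cannot succeed in general.

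Concretely, your step ``by (2) we may assume $\sigma\in\Sigma/\Delta_1$'' is not without loss of generality, because you have already broken the symmetry by fixing $\alpha\in A=S^p/\Delta_1\smallsetminus S^p$. When the bridging configuration is a string $\alpha_1,\ldots,\alpha_m$ of type $\mathsf B_m$ with $\alpha_1\in A$, $\alpha_2,\ldots,\alpha_{m-1}\in S^p$, $\alpha_m\in B$, and $\sigma=\alpha_1+\ldots+\alpha_m\in\Sigma$, one computes $c(D_{\alpha_m},\sigma)=\langle\alpha_m^\vee,\sigma\rangle=0$ (short root) while $c(D_{\alpha_1},\sigma)=\langle\alpha_1^\vee,\sigma\rangle=1\neq 0$. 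Hence $\sigma$ lies in $\Sigma/\Delta_2$ but \emph{not} in $\Sigma/\Delta_1$, and your argument forcing $\langle\alpha^\vee,\sigma\rangle=0$ breaks down. In this configuration property~(P1) is not violated and there is no contradiction to extract: the pair $(\Delta_1,\Delta_2)$ may genuinely fail (1b).

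The paper handles this by changing course. After isolating the $\mathsf B_m$ exception, it observes that $\supp\sigma$ is disjoint from the support of every other spherical root, invokes rank $>1$ to ensure $\Sigma\smallsetminus\{\sigma\}\neq\varnothing$, and then \emph{constructs a different pair} $(\Delta'_1,\Delta'_2)$ that decomposes $\s$: either the obvious splitting when $\supp\sigma$ is orthogonal to its complement, or, when it is not, a pair built from colors supported away from $\supp\sigma$ (using \cite[Lemma~2.22(2)]{B09}) together with a single color inside $\supp\sigma$. The lemma only asks that $\s$ be decomposable, not that the original witnesses do the job, so this suffices. This replacement of the decomposing pair is the missing idea in your argument.
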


\begin{proof}
One implication is easy (and no assumption on cuspidality and rank is needed). 
If $\Delta_1$ and $\Delta_2$ are distinguished subsets of $\Delta$ that decompose $\s$, 
they satisfy conditions (1a), (1b) and (2) of Definition~\ref{def:prod}. 
In particular (P4) holds, so condition (1b) implies condition (1').

Let us pass to the other implication.
Assume there exist distinguished subsets $\Delta_1$ and $\Delta_2$ of $\Delta$ 
satisfying conditions (1') and (2). 
Then the subsets $\Delta_1$ and $\Delta_2$ satisfy condition (1a), too.
Assume that they do not satisfy condition (1b), 
so there exists a string of $m\geq3$ adjacent simple roots, say $\alpha_1,\ldots,\alpha_m$,
such that $\alpha_1\in S^p/\Delta_1\smallsetminus S^p$, $\alpha_2,\ldots,\alpha_{m-1}\in S^p$
and $\alpha_m\in S^p/\Delta_2\smallsetminus S^p$.
Here we use the cuspidality assumption and obtain in particular that $\alpha_2,\ldots,\alpha_{m-1}\in\supp\Sigma$.
By axiom (S) of Definition~\ref{def:system} (see \cite[Section~1.1.6]{BL11} for an equivalent combinatorial version),
every $\alpha\in S^p$ must be orthogonal to every $\sigma\in\Sigma$. 
Therefore, in our case there exists at least one spherical root $\sigma$ such that $\alpha_2,\ldots,\alpha_{m-1}\in\supp\sigma$.
This contradicts the property (P1) unless $\sigma$ is equal to $\alpha_1+\ldots+\alpha_m$ with support of type $\mathsf B_m$,
see \cite[Table~1]{BL11} for the list of spherical roots.  
Moreover, for all $\sigma'\in\Sigma\smallsetminus\{\sigma\}$ we have $\supp\sigma'\cap\supp\sigma=\varnothing$.

We now use the rank $>1$ assumption. 
So the set $S\smallsetminus\supp\sigma$ which is equal to $\supp(\Sigma\smallsetminus\{\sigma\})$ is nonempty.

If $S\smallsetminus\supp\sigma$ is orthogonal to $\supp\sigma$ then $\s$ is decomposable: 
indeed $\Delta'_1=\{D\in\Delta(\alpha):\alpha\not\in\supp\sigma\}$ 
and $\Delta'_2=\{D\in\Delta(\alpha):\alpha\in\supp\sigma\}$ decompose $\s$.

Otherwise $S\smallsetminus\supp\sigma$ necessarily has a connected component of type $\mathsf A_r$
non-orthogonal to $\supp\sigma$. 
By \cite[Lemma~2.22(2)]{B09} there exists a non-empty distinguished set $\Delta'_1$ of colors 
moved only by simple roots in $S\smallsetminus\supp\sigma$ 
and such that $c(D,\sigma)=0$ for all $D\in\Delta_1$.  
Set $\Delta'_2$ consisting of only one color, 
the unique one moved by a simple root in $\supp\sigma$ and orthogonal to $S\smallsetminus\supp\sigma$.    
The sets $\Delta'_1$ and $\Delta'_2$ decompose $\s$.
\end{proof}

\subsection{Positive combs}\label{s:poscomb}
In this reduction step we deal with spherical systems having a color $D\in \A$ such that $c(D,-)$ is positive on all spherical roots. In loose terms, the geometric realizability of such a system depends on the geometric realizability of other systems that contain the same kind of color, but where the latter is moved only by one simple root.

\begin{definition}
Let $\s=(S^p,\Sigma,\A)$ be a spherical $G$-system. A {\em positive comb} of $\s$ is an element $D$ of $\A$ such that $c(D,\sigma)\geq0$ for all $\sigma\in\Sigma$. It is also called {\em positive $n$-comb} if $n=\card\{\alpha\in S\cap\Sigma \;|\; c(D,\alpha)=1\}$.
\end{definition}

Let $\s=(S^p,\Sigma,\A)$ be a spherical $G$-system with a positive comb $D$. Set $S_D=\{\alpha\in S\cap\Sigma\;|\; c(D,\alpha)=1\}$. For all $\alpha\in S_D$, define $\s_\alpha=(S^p,\Sigma_\alpha,\A_\alpha)$ where $\Sigma_\alpha=\Sigma\smallsetminus(S_D\smallsetminus\{\alpha\})$ and $\A_\alpha=\cup_{\beta\in S\cap\Sigma_\alpha}\A(\beta)$. Then the spherical $G$-system $\s_\alpha$ has a positive $1$-comb in $\A_\alpha(\alpha)$.

\begin{proposition}[{\cite[\S6]{BP11}}]\label{prop:ncomb}
Let $\s=(S^p,\Sigma,\A)$ be a spherical $G$-system with a positive $n$-comb $D$, with $n>1$. If for all $\alpha\in S_D$ there exists a wonderful $G$-variety with spherical system $\s_\alpha$, then there exists a wonderful $G$-variety with spherical system $\s$.
\end{proposition}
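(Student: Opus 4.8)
The plan is to realize $\s$ as a wonderful fiber product of the varieties $Y_\alpha$ realizing the systems $\s_\alpha$, with $\alpha$ ranging over $S_D$, iterating the decomposition construction of Proposition~\ref{prop:products}. The crucial combinatorial input is that deleting from $\Sigma$ the spherical roots in $S_D\smallsetminus\{\alpha\}$ is the same as taking a suitable quotient of $\s$: one shows that the set of colors $\Delta_\alpha$ consisting of the second color $D_\beta^-$ moved by each $\beta\in S_D\smallsetminus\{\alpha\}$ (the one that is not $D$) together with, if necessary, colors needed to kill the corresponding spherical roots, is distinguished, and that $\s/\Delta_\alpha$ is either $\s_\alpha$ or parabolically induces it. The positivity of $D$ on all of $\Sigma$ is exactly what makes these sets distinguished: adding $D$ to $\Delta_\alpha$ with appropriate multiplicities gives a non-negative row, and a Zariski-density/induction argument on the rank lets us assume all the $\s_\alpha$ of strictly lower rank are geometrically realizable.

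The key steps, in order, would be as follows. First I would verify the combinatorial claim that for each $\alpha\in S_D$ there is a distinguished set of colors $\Delta_\alpha$ of $\s$ with $\s/\Delta_\alpha\cong\s_\alpha$ (up to parabolic induction, handled by Proposition~\ref{prop:induction}); this uses axioms (A1)--(A3), the explicit formula for the Cartan pairing, and the positivity of $D$. Second, I would check that any two of these, say $\Delta_\alpha$ and $\Delta_{\alpha'}$ for distinct $\alpha,\alpha'\in S_D$, when intersected appropriately, decompose $\s_{\alpha,\alpha'}:=\s/(\text{colors killing }S_D\smallsetminus\{\alpha,\alpha'\})$ in the sense of Definition~\ref{def:prod}: condition (1) on the $S^p$'s is immediate because the colors removed are of type $\A$ and affect $S^p/\Delta'$ only through simple roots already in $S^p$, and condition (2), namely $\Sigma\subseteq\Sigma/\Delta_1\cup\Sigma/\Delta_2$, holds because every spherical root of $\s_{\alpha,\alpha'}$ lies in $S_D\cup(\Sigma\smallsetminus S_D)$ and each such root survives in at least one of the two quotients. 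Third, I would run an induction on $|S_D|=n$: by the rank hypothesis $n>1$ each $\s_\alpha$ has strictly smaller rank, hence is realizable by some $Y_\alpha$; grouping $S_D=\{\alpha_1\}\sqcup\{\alpha_2,\ldots,\alpha_n\}$, apply the inductive statement to the subsystem with comb-set $\{\alpha_2,\ldots,\alpha_n\}$ to get a variety realizing $\s/\Delta_{\alpha_1}$... no, rather to get the variety for the system $\s'$ obtained by removing only $\alpha_1$; then Proposition~\ref{prop:products}(\ref{prop:products:decomposedisproduct}) applied to the pair $(\s/\Delta_{\alpha_1},\s')$ over their common quotient yields the wonderful fiber product realizing $\s$.

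The main obstacle I expect is the bookkeeping in the second step: showing that the relevant pairs of color-subsets genuinely satisfy Definition~\ref{def:prod}, and in particular that the auxiliary colors one adjoins to $\Delta_\alpha$ to make $\Sigma/\Delta_\alpha$ come out right do not violate condition (1b) about connected components of $S^p/(\Delta_1\cup\Delta_2)$. This is delicate precisely because of the phenomenon exploited in Lemma~\ref{lemma:equivalent}: a spherical root of type $\mathsf B$ or $\mathsf G$ supported partly in $S^p$ can link the two halves. One must check that a positive comb cannot have value $1$ on a simple root sitting inside such a $\mathsf B_m$-support, which follows from axiom ($\Sigma$1) together with (A2); granting this, the components of $S^p/(\Delta_1\cup\Delta_2)$ split cleanly and Proposition~\ref{prop:products} applies. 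A secondary technical point is ensuring the identification of Cartan pairings under the fiber product matches the prescribed pairing of $\s$, but this is exactly the content of the last paragraphs of the proof of Proposition~\ref{prop:products}(\ref{prop:products:decomposedisproduct}) and requires no new idea, only care that $D$ is identified consistently across all the factors $Y_\alpha$.
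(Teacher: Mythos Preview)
There is a genuine gap. Your plan hinges on realizing each $\s_\alpha$ as a \emph{quotient} $\s/\Delta_\alpha$ so that Proposition~\ref{prop:products} applies, but $\s_\alpha$ is by definition the \emph{localization} of $\s$ in the subset $\Sigma_\alpha=\Sigma\smallsetminus(S_D\smallsetminus\{\alpha\})$ of spherical roots (cf.\ Proposition~\ref{prop:locSigma}), not a quotient. For your candidate $\Delta_\alpha=\{D_\beta^-:\beta\in S_D\smallsetminus\{\alpha\}\}$ to give $\Sigma/\Delta_\alpha=\Sigma_\alpha$, every $D_\beta^-$ would have to vanish on every $\sigma\in\Sigma_\alpha$; but $c(D_\beta^-,\sigma)=\langle\beta^\vee,\sigma\rangle-c(D,\sigma)$, and there is no reason for this to be zero (take $\sigma$ a spherical root whose support is adjacent to $\beta$). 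Your justification that $\Delta_\alpha$ is distinguished is also incorrect: the definition requires a positive combination of the colors \emph{in $\Delta_\alpha$} to be non-negative on $\Sigma$, and you are not allowed to throw $D$ into that combination since $D\notin\Delta_\alpha$ (and you cannot put $D$ into $\Delta_\alpha$ either, since $c(D,\alpha)=1$ would then kill $\alpha$ in the quotient). So the decomposition framework of Definition~\ref{def:prod} simply does not apply here.

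The approach actually used in \cite{BP11}, and recalled in this paper at the end of \S\ref{s:mqhd}, is different: one takes the quotient by the single color $\Delta'=\{D\}$, which \emph{is} distinguished because $D$ is a positive comb. This quotient is minimal of type~($\mathscr L$) and of higher defect in the sense of Definition~\ref{def:combqhd}, with $k=n-1$; the systems $\s_\alpha$ are precisely the localizations $\s_i$ appearing there (each retaining $D$ as a positive $1$-comb), and they all share the same quotient $\s_\alpha/\{D\}=\s/\{D\}$. Theorem~\ref{thm:hd} then gives geometric realizability of $\s$, and the generic stabilizer $H$ is built explicitly inside the stabilizer $K$ for $\s/\{D\}$ via the recipe at the end of \S\ref{s:mqhd}, with the special feature that the simple $L$-modules $W_0,\ldots,W_k$ are all one-dimensional. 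The argument is thus about a single quotient together with several localizations feeding into the higher-defect machinery, not an iterated fiber product.
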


In this case a principal isotropy group of the wonderful $G$-variety with spherical system $\s$ can be explicitly constructed starting from the principal isotropy groups of the wonderful $G$-varieties with spherical systems $\s_\alpha$, for $\alpha\in S_D$ (see \cite[\S5.3 and \S5.4]{BP11} for details).

We will recall how this construction works in a more general case, in Section~\ref{s:mqhd}.

\subsection{Tails}

In this section we recall some results of \cite{BP11}, which show that in proving geometric realizability of a spherical system $\s$ it is possible to ``remove'' certain kinds of spherical roots, called {\em tails}, together with some of the simple roots of their support.

\begin{definition}\label{def:tails}
Let $\s=(S^p,\Sigma,\A)$ be a spherical $G$-system. A {\em tail} of $\s$ is a subset of spherical roots $\widetilde\Sigma\subset\Sigma$ with $\supp\widetilde\Sigma$ included in a connected component $S_0=\{\alpha_1,\ldots,\alpha_n\}$ of $S$ such that:\begin{enumerate}
\item there exists a distinguished subset of colors $\Delta'$ of $\s$ with $\Sigma/\Delta'=\widetilde\Sigma$, and
\item one of the following cases occur:
\begin{itemize}
\item {\em (type $b(m)$)} $S_0$ is of type $\mathsf B_n$, $1\leq m\leq n$, $\widetilde\Sigma=\{\alpha_{n-m+1}+\ldots+\alpha_n\}$ and $\alpha_n\in S^p$ if $m>1$ or $c(D^+_{\alpha_n},\sigma')=c(D^-_{\alpha_n},\sigma')$ for all $\sigma'\in\Sigma$ if $m=1$; 
\item {\em (type $2b(m)$)} $S_0$ is of type $\mathsf B_n$, $1\leq m\leq n$, and $\widetilde\Sigma=\{2\alpha_{n-m+1}+\ldots+2\alpha_n\}$; 
\item {\em (type $c(m)$)} $S_0$ is of type $\mathsf C_n$, $2\leq m\leq n$, and $\widetilde\Sigma=\{\alpha_{n-m+1}+2\alpha_{n-m+2}+\ldots+2\alpha_{n-1}+\alpha_n\}$;
\item {\em (type $d(m)$)} $S_0$ is of type $\mathsf D_n$, $2\leq m\leq n$, and $\widetilde\Sigma=\{2\alpha_{n-m+1}+\ldots+2\alpha_{n-2}+\alpha_{n-1}+\alpha_n\}$;
\item {\em (type $(aa,aa)$)} $S_0$ is of type $\mathsf E_6$ and $\widetilde\Sigma=\{\alpha_1+\alpha_6,\alpha_3+\alpha_5\}$; 
\item {\em (type $(d3,d3)$)} $S_0$ is of type $\mathsf E_7$ and $\widetilde\Sigma=\{\alpha_2+2\alpha_4+\alpha_5,\alpha_5+2\alpha_6+\alpha_7\}$;
\item {\em (type $(d5,d5)$)} $S_0$ is of type $\mathsf E_8$ and $\widetilde\Sigma=\{2\alpha_1+\alpha_2+2\alpha_3+2\alpha_4+\alpha_5,\alpha_2+\alpha_3+2\alpha_4+2\alpha_5+2\alpha_6\}$;
\item {\em (type $(2a,2a)$)} $S_0$ is of type $\mathsf F_4$ and $\widetilde\Sigma=\{2\alpha_3,2\alpha_4\}$.
\end{itemize}
\end{enumerate}
\end{definition}

\begin{proposition}[{\cite[\S6]{BP11}}]\label{prop:tails}
Let $\s=(S^p,\Sigma,\A)$ be a spherical $G$-system with a tail $\widetilde\Sigma$. Set $S'=\supp(\Sigma\smallsetminus\widetilde\Sigma)$. If there exists a wonderful $G_{S'}$-variety with spherical system $\s_{S'}$ then there exists a wonderful $G$-variety with spherical system $\s$.
\end{proposition}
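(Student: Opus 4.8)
The plan is to transfer geometric realizability from $\s_{S'}$ to $\s$ by an explicit construction at the level of generic stabilizers, following the method of \cite[\S6]{BP11}. First I would reduce to the cuspidal case: by Proposition~\ref{prop:cuspidal} it suffices to realize the localization of $\s$ in $\supp\Sigma$, so we may assume $\supp\Sigma=S$, and then $S\smallsetminus S'\subseteq\supp\widetilde\Sigma$ is contained in the single connected component $S_0$ fixed in Definition~\ref{def:tails}, with $S=S'\cup\supp\widetilde\Sigma$.

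Next I would isolate the ``tail part''. Using the distinguished set of colors $\Delta'$ with $\Sigma/\Delta'=\widetilde\Sigma$ supplied by the definition of a tail, together with the axioms of a spherical system, one shows that the colors moving the simple roots of $S_0$ are combinatorially decoupled from those moving the simple roots of $S\smallsetminus S_0$ (in the style of properties (P1)--(P4) of Section~\ref{s:products}), and that the localization $\s_{S_0}$ belongs to a short explicit list: for each of the eight types $b(m)$, $2b(m)$, $c(m)$, $d(m)$, $(aa,aa)$, $(d3,d3)$, $(d5,d5)$, $(2a,2a)$ there is a known ``model'' wonderful $G_{S_0}$-variety $X_0$ with explicitly known generic stabilizer $H_0\subseteq G_{S_0}$ (these are varieties of low rank, or wonderful compactifications of classical symmetric spaces).

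Then I would glue the pieces. Let $H'\subseteq G_{S'}$ be a generic stabilizer of the given wonderful $G_{S'}$-variety $Y$ with $\s_Y=\s_{S'}$. I would define a candidate subgroup $H\subseteq G$ by enlarging $H'$, in the directions of $S\smallsetminus S'$, by the torus and unipotent pieces prescribed by the type of the tail --- exactly as $H_0$ arises from the generic stabilizer of the ``trivial-tail'' model inside $G_{S_0}$ --- and then show that $G/H$ is spherical and admits a wonderful completion $X$. Concretely one can exhibit $X$ as a wonderful fiber product (Proposition~\ref{prop:products}) of a parabolic induction of $Y$ with a parabolic induction of $X_0$ over a common wonderful quotient, checking that the two distinguished sets of colors at play decompose $\s$; alternatively one argues directly with the Luna--Vust theory of embeddings.

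Finally I would verify $\s_X=\s$. The stabilizer of the open $B$-orbit of $X$ has standard Levi with simple roots $S^p$, so $S^p_X=S^p$; by Proposition~\ref{prop:locS} the localization $X_{S'}$ is a wonderful $G_{S'}$-variety with spherical system $\s_{S'}$, whence $\Sigma\smallsetminus\widetilde\Sigma\subseteq\Sigma_X$, while the construction reinserts exactly the roots of $\widetilde\Sigma$, so $\Sigma_X=\Sigma$ by a rank count; finally $\A_X$ and its restricted Cartan pairing are read off from the local structure via the formulas of Definition~\ref{def:system}. The main obstacle is precisely this last check for the colors interacting with both $S'$ and $\supp\widetilde\Sigma$: in particular the case $m=1$ of type $b(m)$, where whether the two colors moved by $\alpha_n$ become identified is governed by the condition $c(D^+_{\alpha_n},\sigma')=c(D^-_{\alpha_n},\sigma')$ for all $\sigma'\in\Sigma$, and this must be faithfully reproduced by the construction; since no single model covers all eight tail types, the verification ultimately runs through a finite case analysis, which is where most of the effort goes.
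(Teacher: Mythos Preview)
The paper does not prove this proposition; it is quoted from \cite[\S6]{BP11}, and the paper only records (in the paragraphs following the statement) the construction of the generic stabilizer $H$ that is carried out in that reference. Your plan has the right overall shape---reduce to the cuspidal case, build a candidate $H$, then verify $\s_X=\s$---but the construction you propose differs from the one the paper describes, and your suggested fiber-product realization is problematic.

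On the construction: the paper does not build $H$ by enlarging $H'=H_\ell$ (your generic stabilizer for $\s_{S'}$) using a model $H_0$ for the localization $\s_{S_0}$. Instead it uses the \emph{quotient} $\s/\Delta'$ supplied by Definition~\ref{def:tails}, together with $H_\ell$. Concretely, one takes the wonderful subgroup $H_q$ for $\s/\Delta'$ and a minimal parabolic $Q\supset H_q$; the Levi $L_Q$ factors (up to a torus) as $G_{S'\cap S_Q}\times G_{\supp\widetilde\Sigma}$, and $H_q=(G_{S'\cap S_Q}\times K_q)Q^r$. One then pulls back the Levi $L_\ell$ of $H_\ell$ along a central isogeny $\pi\colon G_{S'\cap S_Q}Z(L_Q)^\circ\to L_R$ (with $R\supset H_\ell$ minimal parabolic in $G_{S'}$), sets $L=\pi^{-1}(L_\ell)\times K_q$, and chooses $U\subset Q^u$ so that $\Lie Q^u/\Lie U\cong\Lie R^u/\Lie H_\ell^u$ as $L$-modules; then $H=LU$. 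Both the localization and the quotient enter essentially. Your proposal invokes only the localization side, and the phrase ``enlarging $H'$ by the torus and unipotent pieces prescribed by the type of the tail'' does not pin down this recipe.

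On the fiber-product idea: realizing $X$ as a wonderful fiber product of parabolic inductions of $Y$ and of a model $X_0$ would require exhibiting two distinguished subsets of colors that \emph{decompose} $\s$ in the sense of Definition~\ref{def:prod}. But tails appear as a reduction step in their own right precisely because decomposition does not cover them: primitive systems (Definition~\ref{def:primitive}) are by definition indecomposable yet can carry tails, so there exist cuspidal indecomposable systems with tails for which no such $\Delta_1,\Delta_2$ exist. Moreover, the parabolic induction of $Y$ from $S'$ to $G$ is only available when $S'\supseteq S^p$, which fails for most tail types (e.g.\ type $b(m)$ with $m>1$ forces $\alpha_n\in S^p\smallsetminus S'$). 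So this route does not work in general; the actual argument in \cite{BP11} is the direct construction and verification sketched above, with the type $c(m)$ case handled by a variant of the same method.
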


In this case the principal isotropy group of the wonderful $G$-variety with spherical system $\s$ can be explicilty constructed starting from the principal isotropy group of the wonderful $G_{S'}$-variety with spherical system $\s_{S'}$ (see \cite[\S6]{BP11} for details).

Let us recall how this construction works, referring for details and proofs to \cite[\S6]{BP11}. Suppose that $\s$ is cuspidal and that $\widetilde\Sigma$ is a tail of $\s$ not of type $c(m)$, denote by $H_\ell$ a generic stabilizer for the localization of $\s$ in $S \smallsetminus \supp\widetilde\Sigma$, and by $H_q$ one corresponding to the quotient $\s/\Delta'$. Let also $Q$ be a parabolic subgroup of $G$ minimal containing $H_q$, and $R$ be a parabolic subgroup of $G_{S'}$ minimal containing $H_\ell$.

We can assume that $Q$ contains the chosen maximal torus $T$ of $G$, and we denote by $S_Q$ the set of simple roots of $G$ belonging to the standard (i.e.\ containing $T$) Levi subgroup $L_Q$ of $Q$. This Levi subgroup, up to a torus factor, is actually isomorphic to $G_{S'\cap S_Q}\times G_{\supp\widetilde\Sigma}$. Then $H_q$ can be choosen in such a way that $H_q=(G_{S'\cap S_Q}\times K_q)Q^r$, where $K_q$ is a subgroup of $G_{\supp\widetilde\Sigma}$.

Let us denote by $L_R$ the standard Levi subgroup of $R$. By construction there is a central isogeny $\pi\colon G_{S'\cap S_Q} Z(L_Q) \to L_R$. The subgroup $H_\ell$ can be chosen to have Levi subgroup $L_\ell$ inside $L_R$, and we set $L_\ell'=\pi^{-1}(L_\ell)$. We define then $L=L_\ell' \times K_q$. It is a subgroup of $H_q$.

Now the unipotent radical $H^u_\ell$ of $H_\ell$ turns out to correspond to a subgroup $U$ of the unipotent radical $Q^u$ of $Q$ in such a way that $U$ is stable under conjugation by $L$ and the two quotients $\Lie R^u/\Lie H_\ell^u$ and $\Lie Q^u/\Lie U$ are isomorphic as $L$-modules. Here we let $L$ act on $\Lie R^u$ via the map $L\to L_\ell$ given by the projection on $L_\ell'$ followed by the map $\pi$.

Finally, the subgroup $H$ corresponding to $\s$ is $H=LU$. The construction in the case of type $c(m)$ is slightly more involved, but very similar.

Let us illustrate the subgroups appearing in the above procedure in an example. Consider the following system $\s$ with a tail of type $b(m)$, for $G= \SO(2n+1)$:
\[
\begin{picture}(14700,600)(-300,-300)
\put(0,0){\usebox{\mediumam}}
\put(5400,0){\usebox{\plusbm}}
\end{picture}
\]
\[
\s=(\{ \alpha_2,\ldots,\alpha_{l-1},\alpha_{l+2},\ldots,\alpha_{l+m} \}, \{ \alpha_1+\ldots+\alpha_l, \alpha_{l+1}+\ldots+\alpha_{l+m} \}, \varnothing)
\]
where $l=n-m$. 
The system $\s$ has rank $2$, its wonderful subgroup $H$ of $G$ has Levi factor $L=\GL(l)\times \SO(2m)$ 
and unipotent radical $U$ with $\Lie U\cong \C^l\otimes\C^{2m}\oplus\wedge^2\C^l$. 
The quotient $\s/\Delta'$ has rank $1$ and following diagram
\[
\begin{picture}(14700,600)(-300,-300)
\put(0,0){\usebox{\edge}}
\put(1800,0){\usebox{\shortsusp}}
\put(3600,0){\usebox{\edge}}
\put(5400,0){\usebox{\wcircle}}
\put(5400,0){\usebox{\plusbm}}
\end{picture}
\]
Its wonderful subgroup of $G$ is $H_q=(\GL(l)\times \SO(2m))Q^u$. 
It is a parabolic induction by means of the parabolic subgroup $Q$ of $G$, where $S_Q=S\smallsetminus\{ \alpha_l \}$. Therefore $Q$ has Levi part $L_Q=\GL(l)\times\SO(2m+1)$.
The set $S'$ is $\{ \alpha_1,\ldots,\alpha_l\}$, and the localization $\s_{S'}$ is
\[
\begin{picture}(14700,600)(-300,-300)
\put(0,0){\usebox{\mediumam}}
\end{picture}
\]
whose wonderful subgroup of $\SL(l+1)$ is $H_\ell=L_\ell=\GL(l)$. 
The set of colors $\Delta'$, restricted to $\s_{S'}$, corresponds to the inclusion $\GL(l)\subset R$ 
where $R$ is the maximal parabolic subgroup of $\SL(l+1)$ corresponding to the simple root $\alpha_l$. The $L_\ell$-module $\Lie R^u/\Lie H_\ell^u$ is isomorphic to $\C^l$, and correspondingly the unipotent radical $U$ of $H$ is the subgroup of $Q^u$ such that $\Lie Q^u/\Lie U\cong\C^l$.

\subsection{Primitive cases}\label{s:primitive}
The above results lead to the following.

\begin{definition}\label{def:primitive}
\begin{itemize}
\item A spherical $G$-system is {\em primitive} if it is cuspidal, not decomposable, without positive combs and without tails. Correspondingly, a wonderful variety is {\em primitive} if its spherical system is.
\item A positive $1$-comb of a spherical $G$-system $\s$ is called {\em primitive} if $\s$ is cuspidal, not decomposable and without tails.
\end{itemize}
\end{definition}

After Propositions~\ref{prop:cuspidal}, \ref{prop:products}, \ref{prop:ncomb}, and \ref{prop:tails}, Theorem~\ref{thm:luna} holds provided that all primitive spherical systems and all spherical systems with a primitive positive 1-comb are geometrically realizable.

Wonderful varieties with rank $\leq 2$ are well known after \cite{A,Br89,W} and in that case Theorem~\ref{thm:luna} holds.

Therefore we assume that the rank is at least $3$. Then, thanks to Lemma~\ref{lemma:equivalent}, primitive spherical systems and spherical systems with a primitive positive 1-comb are classified in \cite{B09}. To complete the proof of Theorem~\ref{thm:luna}, we examine them all in the next sections: some cases are already known in the literature, some can be treated with further reduction techniques, and some have to be treated explicitly case-by-case.

\subsection{Known cases}\label{s:known}
Spherical homogeneous spaces that are affine (i.e.\ of the form $G/H$ with $H$ reductive) are also well known, see \cite{K,M,Br87}. On the other hand, the wonderful $G$-varieties $X$ whose open $G$-orbit is affine are characterized by the existence of $n_{\sigma}\geq0$ for all $\sigma\in\Sigma_X$ such that $c_X(D,\sum_{\sigma\in\Sigma_X}n_{\sigma} \sigma)>0$ for all $D\in\Delta_X$. It has been shown that all spherical systems with the above property are geometrically realizable; they are also called {\em reductive} spherical systems (see \cite{BP11b}). In the notations of \cite{B09}, they are: the entire clan $\mathsf R$, $\mathsf S$-1, $\mathsf S$-2, $\mathsf S$-3, $\mathsf S$-5, $\mathsf S$-68, $\mathsf T$-1 (with $G$ of type $\mathsf A_{2n}$), $\mathsf T$-9, $\mathsf T$-12, $\mathsf T$-15, $\mathsf T$-15', $\mathsf T$-25. 

The remaining known cases are those with $G$ having a simply-laced Dynkin diagram, see \cite{Lu01,BP05,B07}, and strict wonderful varieties, see \cite{BC10}. We recall that a wonderful variety $X$ is strict if all its isotropy groups are self-normalizing, and this is equivalent to a combinatorial condition on $\s_X$: for every $\sigma \in \Sigma_X$, there exists no wonderful $G$-variety $X'$ with $S^p_{X'}=S^p_X$ and $\Sigma_{X'}=\{2\sigma\}$. 

We end this section including, as a reference, the generic stabilizers of some of the cases of the last two families above (strict and with $G$ simply laced). Precisely, we report in Tables~\ref{table:pm} and \ref{table:a} those that cannot be described with the techniques we will introduce in Section~\ref{s:L}.

The subgroups we give in the tables are all spherical, connected and equal to their normalizers, hence they are wonderful by \cite{Kn96}. Each corresponds to the correct spherical system: one may check this fact directly in the above cited papers \cite{Lu01,BP05,B07,BC10} (except for the case $\mathsf T$-4). Another proof of this fact can be given as follows: given the subgroup $H\subseteq G$ and the spherical system $\s$ of one entry, one checks in \cite{B09} that $\s$ is uniquely determined, among the primitive spherical systems, by its quotients (different from $\s$ itself), which are all known cases. Then our explicit description of $H$ implies that $\s_X$ has the same quotients, where $X$ is the wonderful compactification of $G/H$, and we conclude that $\s_X=\s$. How to read the tables:

\begin{enumerate}
\item The cases in Table \ref{table:pm} satisfy the following property: the generic stabilizer $H$ is a parabolic subgroup of a symmetric subgroup $K$ of $G$. The sixth case is $\mathsf T$-1 for $n=r+1$ and $\mathsf T$-22 otherwise, where $r$ is the rank of $G/H$; moreover $l = \lfloor r/2\rfloor$.
\begin{table}\label{table:pm}
\caption{Known cases I}
\begin{tabular}{cccc}
Case & Type of $G$ & Semisimple type of $K$ & Semisimple type of $H$ \\
\hline
$\mathsf S$-50 &  $\mathsf A_5$ & $\mathsf C_3$ & $\mathsf A_2$ \\
$\mathsf S$-62 & $\mathsf A_{p+2}$ & $\mathsf A_{p}\times\mathsf A_1$ & $\mathsf A_{p}$ \\
$\mathsf S$-67 & $\mathsf A_{p+q+3}$ & $\mathsf A_{p+q+1}\times \mathsf A_1$ & $\mathsf A_{p}\times \mathsf A_{q}\times \mathsf A_1$ \\
$\mathsf S$-75 & $\mathsf A_{2p+q}$ & $\mathsf A_{p+q-1}\times \mathsf A_p$ & $\mathsf A_{p+q-1}\times \mathsf A_{p-1}$ \\
$\mathsf S$-76 & $\mathsf A_{2p+q-1}$ & $\mathsf A_{p+q-1}\times\mathsf A_{p-1}$ & $\mathsf A_{p+q-2}\times\mathsf A_{p-1}$ \\
$\mathsf T$-1 and $\mathsf T$-22  & $\mathsf C_n$ & $\mathsf C_{l}\times\mathsf C_{n-l}$ & $\mathsf C_{l}\times\mathsf C_{n-l-1}$ \\
$\mathsf T$-1  & $\mathsf E_6$ & $\mathsf F_4$ & $\mathsf C_3$ \\
$\mathsf T$-1  & $\mathsf F_4$ & $\mathsf B_4$ & $\mathsf A_1\times\mathsf B_2$ \\
$\mathsf T$-2 & $\mathsf D_{n}$ & $\mathsf A_{n-1}$ & $\mathsf A_{n-2}$ \\
$\mathsf T$-10 & $\mathsf D_n$ & $\mathsf D_{n-1}$ & $\mathsf A_{n-1}$ \\
$\mathsf T$-11 &  $\mathsf E_6$ & $\mathsf D_5$ & $\mathsf D_4$
\end{tabular}
\end{table}
\item The cases in Table \ref{table:a} admit a generic stabilizer $H$ contained in a parabolic subgroup $P\supset B_-$ of $G$, in such a way that $H^u\subset P^u$ and there are Levi subgroups $L\subset L_P$ of $H$ and $P$ respectively, with $L$ containing the radical of $L_P$. We give the semisimple types of $L$ and $L_P$, and the $(L,L)$-module structure of $P^u/H^u$. We denote in the table by $V(\omega_i)$ the simple $(L,L)$-module with highest weight $\omega_i$.
\begin{table}\label{table:a}
\caption{Known cases II}
\begin{tabular}{cccc}
Case & Type of $G$ & Semisimple types of $L\subset L_P$ & $P^u/H^u$ \\
\hline
$\mathsf S$-105  & $\mathsf E_7$ & $\mathsf A_3\times\mathsf A_2\subset\mathsf A_6$ & $V(\omega_1)$ \\
$\mathsf T$-1  & $\mathsf B_{2n}$ & $\mathsf C_n\subset A_{2n-1}$ & $V(\omega_1)$ \\
$\mathsf T$-3  & $\mathsf E_7$ & $\mathsf A_5\subset\mathsf D_6$ & $V(\omega_1)$ \\
$\mathsf T$-4  & $\mathsf E_6$ & $\mathsf A_4\subset\mathsf D_5$ & $V(\omega_2)$ \\
$\mathsf T$-4  & $\mathsf E_8$ & $\mathsf A_6\subset\mathsf D_7$ & $V(\omega_2)$ \\
$\mathsf T$-8  & $\mathsf E_6$ & $\mathsf A_3\times\mathsf A_1\subset\mathsf A_5$ & $V(\omega_1)$
\end{tabular}
\end{table}
\end{enumerate}

\section{Further reduction techniques}\label{s:L}
\subsection{Quotients of type ($\mathscr L$)}
Let $\s=(S^p,\Sigma,\A)$ be a primitive spherical $G$-system with set of colors $\Delta$. Typically, it admits a {\em minimal quotient $\s/\Delta'$ of type ($\mathscr L$)} which is somewhat simpler and known to be geometrically realizable. 

Let us recall that if $\s$ is geometrically realizable then a quotient $\s\to\s/\Delta'$ is called of type ($\mathscr L$) if it corresponds to an inclusion $H\subset K$ of spherical subgroups of $G$ such that $K/H$ is connected, $K$ is minimal containing $H$, Levi subgroups of $H$ and $K$ ($L_H$ and $L_K$, respectively) are equal up to their connected centers and $H^u$ is strictly contained in $K^u$. In addition, the quotient is minimal if $\Lie K^u /\Lie H^u$ is a simple $L_H$-module.

To give a combinatorial version of these properties, it is necessary to take into account the chain of inclusions $H\subseteq K\subseteq L_KQ^r\subseteq Q$ where $Q$ is a parabolic subgroup of $G$ minimal containing $K$, and the corresponding chain of distinguished subsets of colors of $\s$. The result is the following definition; for more details we refer to \cite[\S5]{BP11}.

\begin{definition}
Let $\s$ be a spherical $G$-system with colors $\Delta$, and let $\Delta'$ be a distinguished subset of colors. The quotient $\s/\Delta'$ is {\em of type ($\mathscr L$)} if there exist distinguished subsets of colors $\widetilde\Delta'$ and $\Delta''$ such that $\Delta'\subseteq \widetilde\Delta'\subseteq \Delta''$ and such that:
\begin{enumerate}
\item $\Sigma/\Delta''=\varnothing$, and $\Delta''$ is minimal having this property;
\item no simple root in the support of $\Sigma/\widetilde\Delta'$ moves a color in $\Delta\smallsetminus\Delta''$;
\item there exists a linear combination of elements in $\Sigma/\widetilde\Delta'$, with positive coefficients, that takes via the Cartan pairing non-negative values on all colors in $\Delta''\smallsetminus\widetilde\Delta'$;
\item $\widetilde\Delta'$ is minimal with the above properties.
\end{enumerate}
If in addition $\Delta'$ is minimal distinguished, then $\s/\Delta'$ is {\em minimal of type ($\mathscr L$)}.
\end{definition}

We review in the next sections some special classes of such quotients. For them the above definition together with additional analysis actually provide a way to {\em construct} the principal isotropy group $H$ of a wonderful $G$-variety with spherical system $\s$.

\subsection{Minimal quotients of higher defect}\label{s:mqhd}
The defect of a spherical system $\s=(S^p,\Sigma,\A)$ with set of colors $\Delta$ is defined as 
\[\defect(\s)=\card\,\Delta - \card\,\Sigma.\]
We recall that if $\s$ is geometrically realizable and $H$ is the principal isotropy group of a wonderful variety with spherical system $\s$, then $\defect(\s)$ equals the rank of the character group of $H$.

In this section we discuss those primitive cases $\s$ that admit a minimal quotient $\s/\Delta'$ of type ($\mathscr L$) such that $\defect(\s/\Delta')>\defect(\s)$.

We are now able to define the quotients involved in the reduction step, and state the latter.

\begin{definition}\label{def:combqhd}
Let $\s=(S^p,\Sigma,\A)$ be a spherical $G$-system with set of colors $\Delta$. Let $\Delta'$ be a distinguished subset, minimal of type ($\mathscr L$) with $k=\defect(\s/\Delta')-\defect(\s)>0$.
The quotient $\s/\Delta'$ is of {\em higher defect} if there exist $k+1$ spherical roots $\sigma_0,\ldots,\sigma_k\in\Sigma$ such that, 
if we set, for all non-empty $I\subset\{0,\ldots,k\}$,  $\Sigma_I = \Sigma\smallsetminus\{\sigma_i\;|\;i\not\in I\}$,  
$\s_I=\s_{\Sigma_I}$ and $\Delta'_I=\Delta'|_{\Sigma_I}$, we have: 
\begin{enumerate}
\item $\defect(\s_I)=\defect(\s)+k+1-|I|$,
\item $\Delta'_I$ is minimal of type ($\mathscr L$),
\item $\s_I/\Delta'_I=\s/\Delta'$.
\end{enumerate}
\end{definition}

In the above definition, we also denote for simplicity $\s_{\{i\}}$ by $\s_i$.

\begin{theorem}\label{thm:hd}\cite[Theorem 5.3.1]{BP11}
Let $\s$ be a spherically closed spherical $G$-system with a quotient of higher defect $\s/\Delta'$ as in Definition~\ref{def:combqhd}. Assume that the spherical $G$-systems $\s/\Delta'$, $\s_0,\ldots,\s_k$ are geometrically realizable, and that $\s/\Delta'$ is spherically closed. Then $\s$ is geometrically realizable.
\end{theorem}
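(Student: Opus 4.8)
The plan is to realize $\s$ as a wonderful fiber product built from the systems $\s_0,\ldots,\s_k$ over the common quotient $\s/\Delta'$, using Proposition~\ref{prop:products}\eqref{prop:products:decomposedisproduct} repeatedly. Since $\s/\Delta'$ is geometrically realizable, let $Y$ be a wonderful $G$-variety with $\s_Y = \s/\Delta'$; similarly let $X_i$ be a wonderful $G$-variety with $\s_{X_i} = \s_i$ for each $i \in \{0,\ldots,k\}$. Condition~(3) of Definition~\ref{def:combqhd} provides, for each $i$, a surjective equivariant morphism with connected fibers $X_i \to Y$ (realizing the distinguished subset $\Delta'_i$ of $\s_i$, which is minimal of type ($\mathscr L$) by condition~(2) and hence in particular distinguished). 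The first step is therefore to form the iterated fiber product $X = X_0 \times_Y X_1 \times_Y \cdots \times_Y X_k$ and show it is a wonderful fiber product with spherical system $\s$. To make Proposition~\ref{prop:products}\eqref{prop:products:decomposedisproduct} applicable one must check at each stage that the relevant pair of distinguished sets of colors decomposes the intermediate system, which is where the defect bookkeeping of condition~(1) enters.

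The key computation is to identify, inside $\s$, the distinguished subsets of colors that we are fibering along. For each $i$, let $\Delta_i^{\mathrm{c}} \subset \Delta$ be the set of colors of $\s$ that become ``extra'' when passing from $\s$ to $\s_i$ — concretely, $\Delta_i^{\mathrm{c}}$ should correspond to the color(s) attached to the spherical root $\sigma_i$ that are killed in $\s_i$. Condition~(1), which says $\defect(\s_i) = \defect(\s) + k$, forces the count: removing one of the $k+1$ chosen spherical roots raises the defect by exactly $k$, and this numerology is precisely what guarantees that the colors indexed by the complementary roots $\{\sigma_j : j\ne i\}$ assemble into $k$ pairwise ``independent'' distinguished subsets whose union is itself distinguished with quotient $\s/\Delta'$ (using condition~(3) together with the localization-compatibility Lemma~\cite[Lemma 2.6.1]{BP11} recalled in the excerpt). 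I would then verify the two conditions of Definition~\ref{def:prod} for the pair consisting of (the colors distinguishing $\s_0$ from $\s$) and (the colors distinguishing all the other $\s_j$ jointly from $\s$): condition~(1) on $S^p$ follows because a quotient of type ($\mathscr L$) does not change the parabolic $P$ beyond its center, so $S^p$ is essentially untouched and the connectedness requirement (1b) is automatic; condition~(2), that $\Sigma \subseteq \Sigma/\Delta_1 \cup \Sigma/\Delta_2$, is exactly the statement that every spherical root of $\s$ survives in at least one of the two partial quotients, which holds by construction since each $\sigma_i$ survives in $\s_i$ and all other roots survive in both.

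The main obstacle, and the step I expect to be most delicate, is the inductive organization: one wants to peel off the factors $X_0, X_1, \ldots$ one at a time, at each stage recognizing the partially-built fiber product $X_0\times_Y\cdots\times_Y X_{j}$ as a wonderful $G$-variety whose spherical system is the localization $\s_{\Sigma_{\{0,\ldots,j\}}}$ (equivalently, the quotient of $\s$ by the colors attached to $\sigma_{j+1},\ldots,\sigma_k$), and checking that \emph{this} system again decomposes compatibly so the next application of Proposition~\ref{prop:products}\eqref{prop:products:decomposedisproduct} is legitimate. This requires knowing that the intermediate systems remain cuspidal, or at least that the hypotheses of the decomposition results hold; the spherical-closedness hypotheses on $\s$ and $\s/\Delta'$ in the theorem are presumably used here to control the spherical roots (ensuring no root needs to be doubled) and to pin down the colors uniquely. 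A clean way to avoid a fragile induction is to prove directly that the full iterated fiber product is smooth, complete, simple and toroidal — exactly as in the proof of Proposition~\ref{prop:products}\eqref{prop:products:decomposedisproduct}, where the local structure chart $M = P^u \times \C^\Sigma$ is reconstructed from the charts of the factors via Lemma~\ref{lemma:local} — and then to match up $\A_X$ with $\A$ and the Cartan pairings using Proposition~\ref{prop:morphisms} and the compatibility of colors with localization. Once the local structure is in hand, wonderfulness follows from the standard criterion (complete + smooth + simple + toroidal spherical $\Rightarrow$ wonderful), and reading off $S^p_X = S^p$, $\Sigma_X = \Sigma$ from the chart completes the identification $\s_X = \s$.
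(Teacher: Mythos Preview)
Your approach has a genuine gap: you treat the $\s_i$ as quotients of $\s$, but they are \emph{localizations} $\s_i=\s_{\Sigma_{\{i\}}}$ in a subset of spherical roots (Definition~\ref{def:combqhd}), and these two operations are not the same. A quotient $\s/\Delta_i$ removes colors and keeps (or combines) spherical roots; a localization in $\Sigma'$ removes spherical roots and may \emph{create} new colors of type~$b$ for the simple roots that leave $\Sigma$. In the paper's worked example $\mathsf S$-28, the localization $\s_0$ has the same number of colors as $\s$ (five), but one of them is the new color $D_{\alpha'_3}$, which does not lie in $\Delta$ at all; there is no distinguished $\Delta_0^{\mathrm c}\subset\Delta$ with $\s/\Delta_0^{\mathrm c}=\s_0$. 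Consequently Proposition~\ref{prop:products}\eqref{prop:products:decomposedisproduct} cannot be invoked, and your iterated fiber product $X_0\times_Y\cdots\times_Y X_k$ has no reason to be wonderful with system $\s$ (already the rank count fails in general).

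The paper does not prove this theorem here; it is imported from \cite{BP11}, but the construction is summarized just after the statement. The method is entirely different from fiber products: one works inside the subgroup $K$ corresponding to $\s/\Delta'$, decomposes $\Lie K^u=V\oplus W_0\oplus\cdots\oplus W_k$ as an $L_K$-module so that each $H_i$ (generic stabilizer for $\s_i$) has $\Lie H_i^u$ missing exactly the summand $W_i$, and then defines $H$ directly by specifying its Levi part (cut out in $Z(L_K)^\circ$ by the equations $\gamma_0=\cdots=\gamma_k$) and its unipotent radical (a co-simple $L$-submodule of $\Lie K^u$ containing $V$ but none of the $W_i$). The spherical-closedness hypotheses are used in \cite{BP11} to control this construction and to identify the resulting spherical system, not to manage a decomposition in the sense of Definition~\ref{def:prod}.
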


On the list of \cite{B09} one can check that the combinatorial conditions required by Definition~\ref{def:combqhd} are satisfied by all minimal quotients $\s/\Delta'$ with $\defect(\s/\Delta')>\defect(\s)$ of any primitive spherical system $\s$.

These primitive systems are: $\mathsf S$-4, $\mathsf S$-6, $\mathsf S$-8, $\ldots$,$\mathsf S$-13, $\mathsf S$-15, $\mathsf S$-16, $\mathsf S$-17, $\mathsf S$-19,$\ldots$,$\mathsf S$-49, $\mathsf S$-51, $\mathsf S$-52, $\mathsf S$-54,$\ldots$,$\mathsf S$-60, $\mathsf S$-66, $\mathsf S$-82, $\mathsf S$-83, $\mathsf S$-84, $\mathsf S$-89,$\ldots$,$\mathsf S$-94, $\mathsf S$-97,$\ldots$,$\mathsf S$-104, $\mathsf S$-106,$\ldots$,$\mathsf S$-122, $\mathsf T_1$ (with $G$ of type $\mathsf A_{2n+1}$, $\mathsf B_{2n+1}$, $\mathsf D_{n}$, $\mathsf E_{7}$, $\mathsf E_{8}$), $\mathsf T$-3 of rank $5$ and $7$, $\mathsf T$-4 of rank $6$, $\mathsf T$-5, $\mathsf T$-6, $\mathsf T$-7, $\mathsf T$-13, $\mathsf T$-14, $\mathsf T$-16, $\mathsf T$-17.

For the proof of Theorem~\ref{thm:luna} only the following 38 cases need to be examined, since the others are known after Section~\ref{s:known}: $\mathsf S$-6, $\mathsf S$-8, $\mathsf S$-12, $\mathsf S$-13, $\mathsf S$-15, $\mathsf S$-16, $\mathsf S$-19, $\mathsf S$-21, $\mathsf S$-25,$\ldots$,$\mathsf S$-31, $\mathsf S$-36,$\ldots$,$\mathsf S$-40, $\mathsf S$-43, $\mathsf S$-44, $\mathsf S$-45, $\mathsf S$-51, $\mathsf S$-52, $\mathsf S$-59, $\mathsf S$-60, $\mathsf S$-89, $\mathsf S$-98, $\mathsf S$-99, $\mathsf S$-100, $\mathsf S$-103, $\mathsf S$-109, $\mathsf S$-113, $\mathsf S$-114, $\mathsf S$-115, $\mathsf T$-16, $\mathsf T$-17.

As an example, we give the details for the system $\mathsf S$-28, leaving to the Reader the long but elementary verification on the others. Let $\s$ be the case $\mathsf S$-28. Then $G=G_1\times G_2$ with $G_1$ of type $\mathsf A_1$ and $G_2$ of type $\mathsf B_3$. $\s=(S^p,\Sigma,\A)$ with $S^p=\varnothing$, $\Sigma=S$ and $\A=\{D_1^+, D_1^-, E_1^-, E_2^+, E_3^-\}$, with Cartan pairing
\[
\begin{array}{c|cccc}
c & \alpha_1 & \alpha'_1 & \alpha'_2 & \alpha'_3 \\
\hline
D_1^+ & 1 & 1 & -1 & 1\\
D_1^- & 1 & -1 & 1 & -1\\
E_1^- & -1 & 1 & 0 & -1\\
E_2^+ & -1 & 0 & 1 & 0\\
E_3^- & -1 & -1 & -1 & 1\\
\end{array}
\]

Set $\Delta'=\widetilde\Delta'=\{D_1^+, E_2^+\}$ and $\Delta''=\Delta'\cup\{D_1^-\}$. The former is minimal distinguished and the latter is distinguished, we have $\s/\Delta'=(\varnothing, \{\alpha_1+\alpha_2'\}, \varnothing)$ and $\s/\Delta''=(\{\alpha_2'\}, \varnothing, \varnothing)$. Definition~\ref{def:combqhd} is satisfied, indeed the simple roots in the support of $\Sigma/\Delta'$ are $\alpha_1$ and $\alpha_2'$, which do not take value $1$ on $\Delta\smallsetminus\Delta''=\{E_1^-,E_3^-\}$, and the element $\alpha_1+\alpha_2'$ of $\Sigma/\Delta'=\Sigma/\widetilde\Delta'$ takes non-negative values on all colors in $\Delta''\smallsetminus\widetilde\Delta'=\{D_1^-\}$. Hence $\s/\Delta'$ is a minimal quotient of type ($\mathscr L$).

The full set of colors of $\s$ is $\A$, and the full set of colors of $\s/\Delta'$ is $\{D_{\alpha_1}=D_{\alpha_2'}, D_{\alpha_1'}, D_{\alpha_3'} \}$; the defect of $\s$ is $1$ and the defect of $\s/\Delta'$ is $2$. Hence $k=1$, and the two spherical roots $\sigma_0$, $\sigma_1$ required by Definition~\ref{def:combqhd} are resp.\ $\alpha_1'$ and $\alpha_3'$. If $I$ is equal e.g.\ to $\{0\}$, then $\s_I=(\varnothing, \{\alpha_1,\alpha_1',\alpha_2'\}, \{D_1^+, D_1^-, E_1^-, E_2^+\})$ (where however $D_1^+$ is not moved by $\alpha_3'$), the full set of colors of $\s_I$ is $\{D_1^+, D_1^-, E_1^-, E_2^+, D_{\alpha_3'}\}$, and $\s_I$ has defect $2$. The restriction $\Delta'_I$ is $\{D_1^+, E_2^+\}$, the quotient $\s_I/\Delta'_I$ is equal to $\s/\Delta'$, and one checks as above that it is minimal of type ($\mathscr L$) for $\s_I$.

Finally, all systems in this example are spherically closed, the systems $\s/\Delta'$ and $\s_{\{0,1\}}$ are geometrically realizable because they have rank $1$, the system $\s_{\{0\}}$ and $\s_{\{1\}}$ are geometrically realizable because they are parabolic inductions of the cases resp.\ $\mathsf S$-5 (a known case) and $\mathsf S$-7 (discussed explicitly in Section~\ref{sect:expl}).

Also for all the other cases the remaining assumptions of Theorem~\ref{thm:hd} are fulfilled: the geometric realizability of $\s/\Delta'$, $\s_0,\ldots,\s_k$ follows via the previous reduction techniques from cases that are either known after Section~\ref{s:known}, or discussed explicitly in Section~\ref{sect:expl}, or are primitive cases with minimal quotients of higher defect and having rank less than the rank of $\s$. Therefore, we have the following.

\begin{proposition}
Theorem~\ref{thm:luna} follows from the geometric realizability of the primitive spherical systems without minimal quotients of higher defect.
\end{proposition}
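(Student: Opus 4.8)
The plan is to obtain the reduction by assembling the steps of Section~\ref{s:reduction} with Theorem~\ref{thm:hd}, organised as an induction on the rank. First I would record the starting point: by Propositions~\ref{prop:cuspidal}, \ref{prop:products}, \ref{prop:ncomb} and \ref{prop:tails}, geometric realizability of an arbitrary spherical $G$-system is implied by that of all primitive spherical systems and all spherical systems carrying a primitive positive $1$-comb; by the reduction to the spherically closed setting over an adjoint group we may assume $\s$ spherically closed; and since the rank $\leq 2$ cases are classical (\cite{A,Br89,W}), we may argue under the inductive hypothesis that every spherical system of rank $<r=\rank\s$ is geometrically realizable. By Lemma~\ref{lemma:equivalent}, for $r\geq 3$ the two families above are exactly the finite lists of \cite{B09}.

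Next I would treat the primitive systems of rank $\geq 3$ by dichotomy. If $\s$ admits no minimal quotient of type $(\mathscr L)$ with $\defect(\s/\Delta')>\defect(\s)$, then $\s$ is precisely one of the systems whose realizability is assumed in the statement, and there is nothing to do. If $\s$ does admit such a quotient $\s/\Delta'$, I would invoke Theorem~\ref{thm:hd}. This requires checking, on the list of \cite{B09}, the combinatorial conditions of Definition~\ref{def:combqhd} --- exhibiting the spherical roots $\sigma_0,\dots,\sigma_k$ and the localizations $\s_0,\dots,\s_k$, as done above for $\mathsf S$-28 --- together with spherical closedness of $\s/\Delta'$. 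It then remains to know that $\s/\Delta'$ and $\s_0,\dots,\s_k$ are geometrically realizable: the systems $\s_i$ have rank $r-k<r$, and both they and $\s/\Delta'$, after applying the reductions of Section~\ref{s:reduction}, reduce to systems that are either known (Section~\ref{s:known}), treated explicitly in Section~\ref{sect:expl}, or primitive of rank $<r$ --- hence realizable by the inductive hypothesis, the primitive cases of smaller rank that themselves carry a higher-defect quotient being handled by recursion on the rank. Theorem~\ref{thm:hd} then delivers the realizability of $\s$.

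Finally I would dispose of the spherical systems with a primitive positive $1$-comb, which are not primitive and so are not covered directly by the hypothesis: for $r\geq 3$ these form a further finite list in \cite{B09}, and each is realized either by the same application of Theorem~\ref{thm:hd} --- its auxiliary systems again being of smaller rank --- or directly in Section~\ref{sect:expl}. I expect the main obstacle to be not conceptual but organisational: one must verify uniformly, over the lengthy lists of \cite{B09}, that every minimal higher-defect quotient meets all of Definition~\ref{def:combqhd} and that the corresponding $\s/\Delta'$ is spherically closed, and that the recursive reduction of the auxiliary systems $\s/\Delta'$ and $\s_0,\dots,\s_k$ always lands either on cases of rank $<r$ or on cases already settled, so that the induction genuinely closes. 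Granting these routine verifications, the Proposition follows by collecting the pieces.
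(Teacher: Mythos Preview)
Your treatment of primitive spherical systems matches the paper's exactly: the discussion preceding the proposition is the induction on rank you outline, applying Theorem~\ref{thm:hd} to each primitive system with a higher-defect quotient and verifying on the list of \cite{B09} that the auxiliary systems $\s/\Delta'$, $\s_0,\ldots,\s_k$ reduce (via the techniques of Section~\ref{s:reduction}) to known cases, to cases in Section~\ref{sect:expl}, or recursively to primitive systems of smaller rank that themselves carry a higher-defect quotient.

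Your handling of the primitive positive $1$-comb systems, however, is incorrect. They are not in Section~\ref{sect:expl}, which treats only primitive systems, and they are not in general amenable to Theorem~\ref{thm:hd}: the remark at the end of \S\ref{s:mqhd} that a positive $k$-comb yields a higher-defect quotient applies only for $k>1$. The paper treats these systems separately in Section~\ref{sect:pp1c}, reducing most of them via \cite[Proposition~3.6]{Lu01} to the realizability of the quotient $\s/\{D\}$, and settling the four exceptions (those with $S_D\cap\supp(\Sigma\smallsetminus S)\neq\varnothing$) by explicit construction of the isotropy group. You are right that a literal reading of the proposition requires this input --- the paper's own argument before the proposition covers only the primitive-system half and tacitly defers the $1$-combs --- but the mechanism you propose for them does not work.
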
 

Finally, let us give more details on how the spherical subgroup $H$ corresponding to $\s$ is related to $H_0,\ldots,H_k$ and $K$, corresponding resp.\ to $\s_0,\ldots,\s_k$ and $\s/\Delta'$.

We show in \cite[\S5]{BP11} that $H_0,\ldots,H_k$ and $K$ can be choosen so that the following holds.
\begin{enumerate}
\item A Levi factor $L_K$ of $K$ contains a subgroup $L'$ equal to $L_K$ up to the connected center (i.e.\ $L'Z(L_K)^\circ=L_K$) and such that $L'$ is also equal to a Levi subgroup of $H_i$ up to the connected center;
\item there exist an $L_K$-module decomposition $\Lie\,K^u=V\oplus W_0\oplus\ldots\oplus W_k$ and $Z(L_K)^\circ$-characters $\gamma_0,\ldots,\gamma_k$ such that for all $i,j\in \{0,\ldots,k\}$
\begin{enumerate}
\item $W_i$ is a simple module under the action of $L'$;
\item $W_i\cong W_j$ as $L'$-modules;
\item $Z(L_K)^\circ$ acts on $W_i$ via the weight $\gamma_i$;
\item $\Lie\,H^u_i = V\oplus W_0\oplus\ldots W_{i-1}\oplus W_{i+1}\oplus\ldots\oplus W_k$.
\end{enumerate}
\end{enumerate}

Then, the subgroup $H$ can be defined as a subgroup of $K$ satisfying the following:
\begin{enumerate}
\item it has a Levi subgroup $L$ equal to $L_K$ up to the connected center;
\item the connected center $Z(L)^\circ$ is the connected subgroup of $Z(L_K)^\circ$ defined by the equations $\gamma_0=\ldots=\gamma_k$;
\item $\Lie\,H^u$ is a co-simple $L$-submodule of $\Lie\, K^u$ containing $V$ but not any direct summand $W_0,\ldots,W_k$ of $\Lie\, K^u$.
\end{enumerate}

If $\s$ has a positive $k$-comb $D$ as in Section~\ref{s:poscomb} with $k>1$, then the subset of colors $\Delta'=\{D\}$ gives a quotient of higher defect. The localizations $\s_i$ have a positive comb corresponding to $D$, but in their case it is a $1$-comb; in the quotient $\s/\Delta'$ the comb $D$ is obviously not present.

The description of $H$ follows the above recipe, with the additional property that for all $i$ the submodule $W_i$ has dimension $1$. 

\subsection{Minimal quotients of rank 0}\label{s:minquotrkz}
Some primitive spherical systems $\s$ of defect $1$ admit a rank $0$ (i.e.\ with $\Sigma=\varnothing$) spherical system $\s/\Delta'$ as quotient of type ($\mathscr L$) of {\em constant defect} (i.e.\ with $\defect(\s)=\defect(\s/\Delta')$). They are: $\mathsf S$-53, $\mathsf S$-73, $\mathsf T$-23, and $\mathsf T$-26. In these cases we also have $\defect(\s)=\defect(\s/\Delta') = 1$.

Rank $0$ spherical systems correspond to partial flag varieties, namely the corresponding principal isotropy groups are parabolic subgroups, which are maximal in $G$ if the defect is equal to $1$. More precisely, such a spherical system has only one color, say $D_\alpha$ where $S\smallsetminus S^p=\{\alpha\}$: then up to conjugation we have as principal isotropy group the parabolic subgroup $Q$ containing $B_-$ corresponding to $S\smallsetminus\{\alpha\}$. The Lie algebra of the unipotent radical of $Q$ decomposes under the action of the standard Levi subgroup $L_Q\supset T$ as
\[\Lie Q^u\cong V(-\alpha)\oplus[\Lie Q^u,\Lie Q^u],\]
where $V(-\alpha)$ is the simple $L_Q$-module of highest weight $-\alpha$. This leads to a unique possible candidate $H$ for the principal isotropy group of a wonderful variety with spherical system $\s$. Namely, the group $H$ has unipotent radical $(Q^u,Q^u)$ and Levi subgroup $L_Q$.

With this choice, $H$ is a spherical subgroup of $G$ and it is equal to its normalizer, hence it is the principal isotropy group of a wonderful variety $X$ (see \cite[Corollary 7.2]{Kn96}). The only proper subgroup of $G$ strictly containing $H$ is $Q$, whence the spherical system $\s_X$ is primitive and admits $\s/\Delta'$ as a quotient of type ($\mathscr L$) of constant defect. Finally, one can check on the list in \cite{B09} that, for the four above systems, these properties identify $\s$ uniquely, so $\s=\s_X$.

Therefore, we have the following.

\begin{proposition}\label{prop:rank0quot}
If $\s$ admits a quotient spherical system $\s/\Delta'$ of type ($\mathscr L$) of constant defect of rank $0$ then it is geometrically realizable and, with the above notation, we have $H=H^u\,L$ where $L=L_Q$ and $H^u=(Q^u,Q^u)$.
\end{proposition}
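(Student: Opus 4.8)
\emph{Proof plan.}
Since $\s/\Delta'$ has rank $0$ and defect $1$, it has a single color $D_\alpha$ with $S\smallsetminus(S^p/\Delta')=\{\alpha\}$, and the only homogeneous space it can realize is the partial flag variety $G/Q$, where $Q\supseteq B_-$ is the maximal parabolic subgroup corresponding to $S\smallsetminus\{\alpha\}$. As $\s\to\s/\Delta'$ is a quotient of type ($\mathscr L$) of constant defect, it corresponds, in any geometric realization of $\s$, to an inclusion $H\subseteq Q$ in which $L_Q$ is a Levi subgroup of $H$ up to its connected center, $\Lie H^u\subsetneq\Lie Q^u$, and $\Lie Q^u/\Lie H^u$ is a simple $L_Q$-module. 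Combined with the decomposition $\Lie Q^u\cong V(-\alpha)\oplus[\Lie Q^u,\Lie Q^u]$ recalled above, this forces $\Lie H^u=[\Lie Q^u,\Lie Q^u]$, i.e.\ $H$ has Levi subgroup $L_Q$ and unipotent radical $(Q^u,Q^u)$. Hence the asserted formula will follow once this particular subgroup is shown to be a generic stabilizer of a wonderful variety realizing $\s$.

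\emph{$H$ is wonderful.} I would check, for each of the four relevant systems, that $H$ (with Levi $L_Q$ and unipotent radical $(Q^u,Q^u)$) is a spherical subgroup of $G$ equal to its normalizer. Sphericity is equivalent to $V(-\alpha)\cong Q^u/(Q^u,Q^u)$ being a spherical $L_Q$-module: indeed $G/H\to G/Q$ is a fibration with fiber $Q/H\cong Q^u/(Q^u,Q^u)$, on which the stabilizer $B\cap Q$ of the open point of $G/Q$ acts as a Borel subgroup of $L_Q$ via $V(-\alpha)$, so the generic $B$-orbit of $G/H$ is dense exactly when a Borel of $L_Q$ has a dense orbit on $V(-\alpha)$; this is verified directly in the four cases. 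For $N_G(H)=H$: any $g\in N_G(H)$ normalizes $H^u=(Q^u,Q^u)$, hence lies in $N_G(Q^u)=Q$, and inside $Q$ it must respect a Levi decomposition with Levi $L_Q$ together with the induced linear action of $L_Q$ on $Q^u/(Q^u,Q^u)$, which forces $g\in H$. By \cite[Corollary 7.2]{Kn96} the homogeneous space $G/H$ then admits a wonderful completion $X$, and $\s_X$ is a well-defined spherical $G$-system.

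\emph{Identification $\s_X=\s$.} The crucial point is that $Q$ is the only proper subgroup of $G$ strictly containing $H$. This follows from the $L_Q$-module structure of $\Lie G$: $\Lie Q$ is a maximal subalgebra of $\Lie G$, the only $L_Q$-submodules of $\Lie Q^u$ containing $[\Lie Q^u,\Lie Q^u]$ are $[\Lie Q^u,\Lie Q^u]$ and $\Lie Q^u$ (since $V(-\alpha)$ is simple), and $H$, $Q$, $G$ are self-normalizing. Consequently, via Proposition~\ref{prop:morphisms}, the variety $X$ has exactly one nontrivial quotient, which is of rank $0$, of type ($\mathscr L$), of constant defect, and attached to the maximal parabolic $Q$. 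It then remains to inspect the classification of primitive spherical $G$-systems in \cite{B09}: for each of $\mathsf S$-53, $\mathsf S$-73, $\mathsf T$-23, $\mathsf T$-26 one verifies that primitivity together with possessing this single nontrivial quotient determines $\s$ uniquely among primitive systems, whence $\s_X=\s$. This yields both geometric realizability and the formula $H=H^u L$ with $L=L_Q$ and $H^u=(Q^u,Q^u)$.

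The step I expect to be the main obstacle is the last one: establishing that $Q$ is the unique proper overgroup of $H$ (so that $X$ carries a single nontrivial quotient) and carrying out the finite check against the list of \cite{B09} that this quotient pins $\s$ down among primitive systems. By contrast, exhibiting the candidate $H$, verifying its sphericity and self-normalization in the four cases, and invoking \cite[Corollary 7.2]{Kn96} are comparatively routine.
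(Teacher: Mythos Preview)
Your proposal is correct and follows essentially the same route as the paper: define $H=L_Q\,(Q^u,Q^u)$, verify it is spherical and self-normalizing so that Knop's result yields a wonderful completion $X$, observe that $Q$ is the unique proper subgroup of $G$ strictly containing $H$ so that $\s_X$ is primitive with the prescribed rank-$0$ quotient of type~($\mathscr L$) and constant defect, and then use the list in \cite{B09} to identify $\s_X=\s$. The paper states these steps tersely; your version adds the fibration argument for sphericity and the $L_Q$-module analysis for the overgroup claim, but the strategy is the same.
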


\subsection{Localizations}\label{sect:loc}
Finally, we report the following obvious consequence of Proposition~\ref{prop:locS}.

\begin{proposition}\label{prop:loc}
Let $\s$ be a sperical $G$-system and $S'$ a subset of $S$. Then the geometric realizability of its localization $\s_{S'}$ follows from the geometric realizability of $\s$.
\end{proposition}

This may be also considered as a reduction technique and applied to those primitive spherical systems that are localizations of other primitive systems. Here some care is needed, since unlike the others this reduction technique works ``backwards'' with respect to the rank. To avoid any problem, we apply Proposition~\ref{prop:loc} to a case of the form $\s_{S'}$ only if the geometric realizability of $\s$ does not depend (possibly through other reduction techniques) on systems of lower rank.

This can be done in the cases: $\mathsf S$-64 which is a localization of $\mathsf S$-70 (discussed explicitly in \S\ref{sect:expl}), $\mathsf S$-65  which is a localization of $\mathsf S$-72 (discussed explicitly in \S\ref{sect:expl}), $\mathsf S$-74 and $\mathsf S$-87 which are localizations of $\mathsf S$-73 (where Proposition \ref{prop:rank0quot} applies), $\mathsf S$-95  which is a localization of $\mathsf S$-96 (discussed explicitly in \S\ref{sect:expl}), $\mathsf T$-24  which is a localization of $\mathsf T$-25 (a reductive, hence known case), $\mathsf T$-29 which is a localization of $\mathsf T$-26 (where Proposition \ref{prop:rank0quot} applies).

\section{Explicit computations}\label{sect:expl}  
In this section we study all the remaining primitive spherical systems. To be precise we are left with the primitive spherical $G$-systems $\s$ such that:
\begin{itemize}
\item the rank is $>2$,
\item $\s$ is not reductive, i.e.\ there does not exist a linear combination $\sum_{\sigma\in\Sigma}n_{\sigma} \sigma$ with $n_{\sigma}\geq0$ for all $\sigma\in\Sigma$ such that $c(D,\sum_{\sigma\in\Sigma}n_{\sigma} \sigma)>0$ for all $D\in\Delta$,
\item the Dynkin diagram of $G$ is not simply-laced,
\item $\s$ is not strict, i.e.\ there exists $\sigma \in \Sigma$ and a wonderful $G$-variety $X$ with $S^p_X=S^p$ and $\Sigma_X=\{2\sigma\}$,
\item $\s$ has no minimal quotient of higher defect,
\item $\s$ has no quotient of type ($\mathscr L$) with constant defect and rank $0$,
\item $\s$ is not one of the localizations listed in \S\ref{sect:loc}.
\end{itemize} 
They consist of 24 cases, which in the notation of \cite{B09} are: $\mathsf{ab}^\mathsf{y}(p-1,p)$, $\mathsf{ag}^\mathsf{y}(1,2)$, $\mathsf{b}^\mathsf{y}(4)$, $\mathsf{b}^\mathsf{w}(3)$, $\mathsf S$-63, $\mathsf S$-69,\ldots,$\mathsf S$-72, $\mathsf S$-77,\ldots,$\mathsf S$-81, $\mathsf S$-85, $\mathsf S$-86, $\mathsf S$-88, $\mathsf S$-96, $\mathsf T$-18,\ldots,$\mathsf T$-21, $\mathsf T$-27 and $\mathsf T$-28.

\subsection{Non-essential quotients of type ($\mathscr L$)}\label{subsect:noness}
Let $\s=(S^p,\Sigma,\A)$ be a spherical $G$-system. 

\begin{definition}
A minimal quotient $\s\to\s/\Delta'$ is called {\em essential} if 
\[(\Sigma/\Delta')\cap\Sigma=\varnothing.\]
\end{definition}

Among the $24$ cases we have to consider, the following admit a non-essential minimal quotient of type ($\mathscr L$) and of constant defect: $\mathsf S$-69, $\mathsf S$-71, $\mathsf S$-77,$\ldots$,$\mathsf S$-80, $\mathsf S$-86, $\mathsf S$-88, $\mathsf T$-18, $\mathsf T$-19, $\mathsf T$-20, $\mathsf T$-27. We show here how to treat them with a common approach: this leads to their geometric realizability with an explicit description of their principal isotropies, and only requires a trivial check on each respective non-essential quotient.

Let in general $\s\to\s/\Delta'$ be a non-essential minimal quotient of type ($\mathscr L$) of constant defect. Roughly speaking, the subset of spherical roots $(\Sigma/\Delta')\cap\Sigma$ plays no role in the co-connected inclusion $H\subset K$ corresponding to the quotient $\s\to\s/\Delta'$. 

For all $D\in\Delta'$ and $\sigma\in(\Sigma/\Delta')\cap\Sigma$, one clearly has $c(D,\sigma)=0$. Therefore, the subset $\Delta'$ can be identified with a distinguished subset of the spherical $G$-system $\widehat{\s}=(S^p,\widehat\Sigma,\widehat\A)$ with $\widehat\Sigma=\Sigma\smallsetminus(\Sigma/\Delta')$ and $\widehat\A=\cup_{\alpha\in S\cap\widehat\Sigma}\A(\alpha)$. The quotient $\widehat{\s}\to\widehat{\s}/\Delta'$ is still minimal, of type ($\mathscr L$) and of constant defect, but clearly essential.

We suppose that $\widehat\s$ and $\s/\Delta'$ are geometrically realizable, and let $\widehat H\subset\widehat K$ be the co-connected inclusion corresponding to $\widehat{\s}\to\widehat{\s}/\Delta'$. Then $\widehat H^u\subset\widehat K^u$ and $W=\Lie \widehat K^u/\Lie \widehat H^u$ is a simple $\widehat H$-module. More explicitly, we can fix the same Levi subgroup $\widehat L$ for $\widehat H$ and $\widehat K$, and $W$ is a simple spherical $\widehat L$-module. 
Let also $K$ be the general isotropy group corresponding to $\s/\Delta'$, with Levi factor $L$ and unipotent radical $K^u$.

Now the crucial claim is that $K$ and $\widehat L$ have a common direct factor $M$ that acts non-trivially on $W$, and that $K^u$ has an $L$-stable subgroup $H^u$ such that $\Lie K^u/\Lie H^u$ is $M$-isomorphic to $W$. Then we have a natural choice of $H$, namely $H=LH^u$.

We conjecture that the claim holds in general; however, it can be checked directly on the $12$ cases above. Indeed, for all of them the spherical system $\widehat{\s}$ is obtained via parabolic induction (see Proposition~\ref{prop:induction}) from a primitive system among those treated in \S\ref{s:known}, \S\ref{s:mqhd} and \S\ref{s:minquotrkz}. As a consequence $\widehat \s$ is geometrically realizable and the general isotropy $\widehat K$ is known. Moreover the quotient $\s/\Delta'$ has rank $1$ or $2$, which implies that $K$ is also known.

For each of the $12$ above cases the resulting $H$ is spherical and self-normalizing, thus wonderful. Let $X$ be the wonderful $G$-variety with general isotropy $H$: it remains to show that $\s = \s_X$.

For this, one has to compute the minimal distinguished subsets of colors of $\s$. The corresponding minimal quotients $\s_1,\s_2,\ldots$ are all of rank $1$ or rank $2$, or are obtained from a case of rank $2$ by adding a tail. As a consequence they are geometrically realizable, and the general isotropies $H_1,H_2,\ldots$ of the associated wonderful varieties are known. We underline that if $\s_i$ has rank $1$ or $2$ then $H_i$ is found in \cite{W}, but if $\s_i$ has a tail the description of $H_i$ is more involved and must be derived from \cite[\S6]{BP11}. However this occurs only once, for $\s$ equal to the case $\mathsf S$-69, and will be discussed in more details later.

One checks that $H_i$ contains $H$ up to conjugation for all $i$, and that there is no subgroup strictly between $H_i$ and $H$. This ensures that $\s_X$ also has the quotients $\s_1,\s_2,\ldots$, and the corresponding subsets of colors of $\s_X$ are minimal distinguished. Then it is elementary to show that these quotients, together with its defect, uniquely determine $\s$ among all spherical $G$-systems, which finally shows $\s_X=\s$.

To illustrate the whole procedure let us discuss the first case (where $\s$ is $\mathsf S$-69) in details. Here and in the following we make use of Luna diagrams, see \cite{B09} for their definition.

The case $\mathsf S$-69 has the following Luna diagram:
\[
\begin{picture}(16000,3000)(-300,-1500)
\put(0,0){\usebox{\aone}}
\put(0,0){\usebox{\edge}}
\put(1800,0){\usebox{\shortam}}
\put(5400,0){\usebox{\edge}}
\put(7200,0){\usebox{\edge}}
\put(9000,0){\usebox{\shortam}}
\put(12600,0){\usebox{\leftbiedge}}
\put(7200,0){\usebox{\aone}}
\put(14400,0){\usebox{\aone}}
\multiput(0,900)(7200,0){2}{\line(0,1){450}}
\put(0,1350){\line(1,0){7200}}
\multiput(7200,-900)(7200,0){2}{\line(0,-1){450}}
\put(7200,-1350){\line(1,0){7200}}
\put(0,600){\usebox{\toe}}
\put(7200,600){\usebox{\tow}}
\put(0,-900){\line(0,-1){900}}
\put(0,-1800){\line(1,0){15000}}
\put(15000,-1800){\line(0,1){2400}}
\put(15000,600){\line(-1,0){300}}
\end{picture}
\]
for $G=\Sp(2p+2q+6)$, where $p,q\geq 1$ and the set of spherical roots is $\Sigma = \{\alpha_1,\alpha_2+\ldots+\alpha_{p+1}, \alpha_{p+2},\alpha_{p+3}+\ldots+\alpha_{p+q+2},\alpha_{p+q+3}\}$. It is convenient to denote here by $Q$ the parabolic subgroup containing $B_-$ and associated to the simple roots $\alpha_1,\ldots,\alpha_p, \alpha_{p+2},\ldots,\alpha_{p+q+1}, \alpha_{p+q+3}$. Let $L$ be the Levi subgroup of $Q$ containing $T$; its commutator subgroup $L'$ is isomorphic to $\SL(p+1)\times\SL(q+1)\times\SL(2)$. The Lie algebra of the unipotent radical $Q^u$ is $L'$-isomorphic to $(\C^{p+1}\otimes\C^{q+1})\oplus(\C^{p+1}\otimes\C^{2})\oplus(\C^{q+1}\otimes\C^{2})\oplus(\C^{p+1}\otimes(\C^{q+1})^*)\oplus S^2\C^{p+1}\oplus S^2\C^{q+1}$.

The non-essential minimal quotient of constant defect $\s/\Delta'$ has Luna diagram: 
\[
\begin{picture}(16000,3000)(-300,-1500)
\put(0,0){\usebox{\edge}}
\put(1800,0){\usebox{\susp}}
\put(5400,0){\usebox{\edge}}
\put(7200,0){\usebox{\edge}}
\put(9000,0){\usebox{\shortam}}
\put(12600,0){\usebox{\leftbiedge}}
\put(5400,0){\usebox{\wcircle}}
\put(7200,0){\usebox{\wcircle}}
\put(14400,0){\usebox{\wcircle}}
\multiput(7200,-300)(7200,0){2}{\line(0,-1){450}}
\put(7200,-750){\line(1,0){7200}}
\end{picture}
\]
The latter has rank $2$, and is obtained by parabolic induction as in Proposition~\ref{prop:induction} with $S'=\{\alpha_1,\ldots,\alpha_{p},\alpha_{p+2},\ldots, \alpha_{p+q+3}\}$ and $\s'$ equal to the primitive system
\[
\begin{picture}(16000,3000)(-300,-1500)
\put(0,0){\usebox{\susp}}
\put(7200,0){\usebox{\edge}}
\put(9000,0){\usebox{\shortam}}
\put(12600,0){\usebox{\leftbiedge}}
\put(7200,0){\usebox{\wcircle}}
\put(14400,0){\usebox{\wcircle}}
\multiput(7200,-300)(7200,0){2}{\line(0,-1){450}}
\put(7200,-750){\line(1,0){7200}}
\end{picture}
\]
With the above notations, the group $K$ (associated to $\s/\Delta'$) has Levi subgroup equal to $L$, and $\Lie K^u$ is obtained from $Q^u$ by removing the summand $\C^{q+1}\otimes\C^{2}$.

The system $\widehat \s$ has Luna diagram
\[
\begin{picture}(16000,3000)(-300,-1500)
\put(0,0){\usebox{\aone}}
\put(0,0){\usebox{\edge}}
\put(1800,0){\usebox{\shortam}}
\put(5400,0){\usebox{\edge}}
\put(7200,0){\usebox{\edge}}
\put(9000,0){\usebox{\susp}}
\put(9000,0){\usebox{\wcircle}}
\put(12600,0){\usebox{\wcircle}}
\put(12600,0){\usebox{\leftbiedge}}
\put(7200,0){\usebox{\aone}}
\put(14400,0){\usebox{\aone}}
\multiput(0,900)(7200,0){2}{\line(0,1){450}}
\put(0,1350){\line(1,0){7200}}
\multiput(7200,-900)(7200,0){2}{\line(0,-1){450}}
\put(7200,-1350){\line(1,0){7200}}
\put(0,600){\usebox{\toe}}
\put(7200,600){\usebox{\tow}}
\put(0,-900){\line(0,-1){900}}
\put(0,-1800){\line(1,0){15000}}
\put(15000,-1800){\line(0,1){2400}}
\put(15000,600){\line(-1,0){300}}
\end{picture}
\]
and is obtained by parabolic induction with $S'=\{\alpha_1,\ldots,\alpha_{p+2}, \alpha_{p+q+3}\}$ and $\s'$ equal to the primitive system
\[
\begin{picture}(16000,3000)(-300,-1500)
\put(0,0){\usebox{\aone}}
\put(0,0){\usebox{\edge}}
\put(1800,0){\usebox{\shortam}}
\put(5400,0){\usebox{\edge}}
\put(7200,0){\usebox{\aone}}
\put(14400,0){\usebox{\aone}}
\multiput(0,900)(7200,0){2}{\line(0,1){450}}
\put(0,1350){\line(1,0){7200}}
\multiput(7200,-900)(7200,0){2}{\line(0,-1){450}}
\put(7200,-1350){\line(1,0){7200}}
\put(0,600){\usebox{\toe}}
\put(7200,600){\usebox{\tow}}
\put(0,-900){\line(0,-1){900}}
\put(0,-1800){\line(1,0){15000}}
\put(15000,-1800){\line(0,1){2400}}
\put(15000,600){\line(-1,0){300}}
\end{picture}
\]
The (essential) quotient $\widehat \s/\Delta'$ is
\[
\begin{picture}(16000,3000)(-300,-1500)
\put(0,0){\usebox{\edge}}
\put(1800,0){\usebox{\susp}}
\put(5400,0){\usebox{\edge}}
\put(7200,0){\usebox{\edge}}
\put(9000,0){\usebox{\susp}}
\put(9000,0){\usebox{\wcircle}}
\put(12600,0){\usebox{\wcircle}}
\put(12600,0){\usebox{\leftbiedge}}
\put(5400,0){\usebox{\wcircle}}
\put(7200,0){\usebox{\wcircle}}
\put(14400,0){\usebox{\wcircle}}
\multiput(7200,-300)(7200,0){2}{\line(0,-1){450}}
\put(7200,-750){\line(1,0){7200}}
\end{picture}
\]
The commutator group $\widehat L'$ of $\widehat L$ is isomorphic to $\SL(p+1)\times\SL(2)\times\SL(q-1)$, and $W$ is $\widehat L'$-isomorphic to $\C^{p+1}\otimes\C^2$. Then $H^u$ is obtained from $K^u$ removing the summand $\C^{p+1}\otimes\C^2$ from $\Lie K^u$; one checks directly that $H=LH^u$ is a spherical subgroup of $G$, equal to its normalizer. We denote by $X$ the wonderful variety with general isotropy $H$, and we must show that $\s_X=\s$.

We must consider all other minimal quotients of $\s$; other than $\s_1=\s/\Delta'$ we have the following two other quotients:
\[
\begin{picture}(16000,3000)(-300,-1500)
\put(0,0){\usebox{\edge}}
\put(1800,0){\usebox{\susp}}
\put(5400,0){\usebox{\edge}}
\put(7200,0){\usebox{\edge}}
\put(9000,0){\usebox{\susp}}
\multiput(1800,0)(10800,0){2}{\circle{600}}\multiput(1800,0)(25,25){13}{\circle*{70}}\put(2100,300){\multiput(0,0)(300,0){34}{\multiput(0,0)(25,-25){7}{\circle*{70}}}\multiput(150,-150)(300,0){34}{\multiput(0,0)(25,25){7}{\circle*{70}}}}\multiput(12600,0)(-25,25){13}{\circle*{70}}
\put(12600,0){\usebox{\leftbiedge}}
\put(0,0){\usebox{\wcircle}}
\put(14400,0){\usebox{\wcircle}}
\multiput(0,-300)(14400,0){2}{\line(0,-1){450}}
\put(0,-750){\line(1,0){14400}}
\end{picture}
\]
\[
\begin{picture}(16000,3000)(-300,-1500)
\put(0,0){\usebox{\edge}}
\put(1800,0){\usebox{\susp}}
\put(5400,0){\usebox{\edge}}
\put(1800,0){\usebox{\shortam}}
\put(7200,0){\multiput(0,0)(1800,0){2}{\usebox{\edge}}\put(3600,0){\usebox{\shortsusp}}\put(5400,0){\usebox{\leftbiedge}}\put(1800,0){\usebox{\gcircle}} }
\put(0,0){\usebox{\wcircle}}
\put(7200,0){\usebox{\wcircle}}
\multiput(0,-300)(7200,0){2}{\line(0,-1){450}}
\put(0,-750){\line(1,0){7200}}
\end{picture}
\]
The former (denote it by $\s_2$) has rank $2$, and according to \cite{W} the general isotropy $H_2$ is equal to the product of $H^u$ with the Levi subgroup of type $\mathsf A_{p+q+1}\times \mathsf A_1$ containing $T$. The latter quotient (denote it by $\s_3$) is obtained adding to a spherical system of rank $2$ a tail of type $c_{q+2}$. According to \cite[\S6]{BP11} the general isotropy $H_3$ is equal to $LH_3^u$, where $\Lie H_3^u$ is obtained from $\Lie Q^u$ by removing the summand $\C^{p+1}\otimes\C^2$.

The inclusions $H\subset H_i$ for $i\in\{1,2,3\}$ are clear, hence $\s_X$ admits $\s_1$, $\s_2$ and $\s_3$ as quotients; now we must show that $\s_X=\s$.

The quotients $\s_1$, $\s_2$ and $\s_3$  of $\s_X$ have one color each moved by exactly two simple roots, and each of these simple roots (namely $\alpha_1$, $\alpha_{p+2}$ and $\alpha_{p+q+3}$) occurs exactly twice; we deduce that they are spherical roots of $X$ and that $\A_X$ has (possibly among others) three colors in the following configuration:
\[
\begin{picture}(16000,3000)(-300,-1500)
\put(0,0){\usebox{\aone}}
\put(0,0){\usebox{\edge}}
\put(1800,0){\usebox{\susp}}
\put(5400,0){\usebox{\edge}}
\put(7200,0){\usebox{\edge}}
\put(9000,0){\usebox{\susp}}
\put(12600,0){\usebox{\leftbiedge}}
\put(7200,0){\usebox{\aone}}
\put(14400,0){\usebox{\aone}}
\multiput(0,900)(7200,0){2}{\line(0,1){450}}
\put(0,1350){\line(1,0){7200}}
\multiput(7200,-900)(7200,0){2}{\line(0,-1){450}}
\put(7200,-1350){\line(1,0){7200}}
\put(0,-900){\line(0,-1){900}}
\put(0,-1800){\line(1,0){15000}}
\put(15000,-1800){\line(0,1){2400}}
\put(15000,600){\line(-1,0){300}}
\end{picture}
\]
The quotient $\s_2$ of $\s_X$ implies that all simple roots are in $\supp\Sigma_X$, and property (S) of Definition~\ref{def:system} excludes that $\alpha_1$, $\alpha_{p+2}$ and $\alpha_{p+q+3}$ are in the support of any other spherical root. Repeated application of property ($\Sigma 2$) of Definition~\ref{def:system} (with $\sigma$ equal to $\alpha_1$, $\alpha_{p+2}$ and $\alpha_{p+q+3}$) shows that no spherical root of $X$ is a sum of two orthogonal simple roots. 

At this point properties (A1) and (A2) of Definition~\ref{def:system}, applied to the spherical roots supported on $\alpha_2$, $\alpha_{p+1}$, $\alpha_{p+3}$ and $\alpha_{p+q+2}$, imply that $\{\alpha_2,\ldots,\alpha_{p+1}\}$ is the support of a single spherical root, as well as $\{\alpha_{p+3},\ldots,\alpha_{p+q+2}\}$. Property (S) of Definition~\ref{def:system} yields that $\alpha_2+\ldots+\alpha_{p+1}$ and $\alpha_{p+3}+\ldots+\alpha_{p+q+2}$ are spherical roots of $X$ and that the Cartan pairing of $\s_X$ is that of $\s$. It follows $\s_X = \s$.

The above techniques, with repeated application of the statements of Definition~\ref{def:system}, are rather standard and are used in a similar way to complete the analysis of the remaining $11$ cases. They will also be used implicitly in the rest of the paper.

\subsection{Remaining cases} We are left with 12 cases, which we subdivide as follows:
\begin{enumerate}
\item $\mathsf{ab}^\mathsf{y}(p-1,p)$, $\mathsf{ag}^\mathsf{y}(1,2)$, $\mathsf{b}^\mathsf{w}(3)$, $\mathsf S$-70, $\mathsf S$-72, $\mathsf S$-96 and $\mathsf T$-28;
\item $\mathsf{b}^\mathsf{y}(4)$;
\item $\mathsf S$-63, $\mathsf S$-81 and $\mathsf S$-85;
\item $\mathsf T$-21.
\end{enumerate}

\subsubsection{}

We start with the case $\mathsf{ab}^\mathsf{y}(p-1,p)$. Here $G$ is of type $\mathsf A_{p-1}\times\mathsf B_p$, with $p\geq2$: $S=\{\alpha_1,\ldots,\alpha_{p-1},\alpha'_1,\ldots,\alpha'_p\}$. Let us consider the quotient $\s\to\s/\Delta'$ of type $(\mathscr L)$ of constant defect described by the following diagrams.
\[\begin{picture}(15900,10500)
\put(300,8700){
\multiput(0,0)(8100,0){2}{
\put(0,0){\usebox{\edge}}
\put(1800,0){\usebox{\susp}}
\multiput(0,0)(1800,0){2}{\usebox{\aone}}
\put(5400,0){\usebox{\aone}}
\put(5400,600){\usebox{\tow}}
\put(1800,600){\usebox{\tow}}
}
\put(15300,0){\usebox{\aone}}
\put(15300,600){\usebox{\tow}}
\put(13500,0){\usebox{\rightbiedge}}
\multiput(0,900)(15300,0){2}{\line(0,1){900}}
\put(0,1800){\line(1,0){15300}}
\multiput(1800,900)(11700,0){2}{\line(0,1){600}}
\put(1800,1500){\line(1,0){11700}}
\multiput(5400,900)(4500,0){2}{\line(0,1){300}}
\put(5400,1200){\line(1,0){4500}}
\multiput(0,-900)(13500,0){2}{\line(0,-1){1200}}
\put(0,-2100){\line(1,0){13500}}
\put(1800,-900){\line(0,-1){900}}
\put(1800,-1800){\line(1,0){9900}}
\multiput(11700,-1800)(0,300){3}{\line(0,1){150}}
\multiput(3600,-1500)(0,300){2}{\line(0,1){150}}
\put(3600,-1500){\line(1,0){6300}}
\put(9900,-1500){\line(0,1){600}}
\multiput(5400,-1200)(2700,0){2}{\line(0,1){300}}
\put(5400,-1200){\line(1,0){2700}}
}
\put(7950,6000){\vector(0,-1){3000}}
\put(300,1800){
\multiput(0,0)(8100,0){2}{
\put(0,0){\usebox{\edge}}
\put(1800,0){\usebox{\susp}}
\multiput(0,0)(1800,0){2}{\usebox{\wcircle}}
\put(5400,0){\usebox{\wcircle}}
}
\put(15300,0){\usebox{\wcircle}}
\put(13500,0){\usebox{\rightbiedge}}
\multiput(0,-300)(13500,0){2}{\line(0,-1){1500}}
\put(0,-1800){\line(1,0){13500}}
\put(1800,-300){\line(0,-1){1200}}
\put(1800,-1500){\line(1,0){9900}}
\multiput(11700,-1500)(0,300){3}{\line(0,1){150}}
\multiput(3600,-1200)(0,300){2}{\line(0,1){150}}
\put(3600,-1200){\line(1,0){6300}}
\put(9900,-1200){\line(0,1){900}}
\multiput(5400,-900)(2700,0){2}{\line(0,1){600}}
\put(5400,-900){\line(1,0){2700}}
}
\end{picture}\]
The spherical system $\s/\Delta'$ is geometrically realizable, it is parabolic induction of the spherical system of a wonderful $Q/Q^r$-variety with affine open $Q/Q^r$-orbit, where $Q$ is the maximal parabolic subgroup of $G$ containing $B_-$ corresponding to $S\smallsetminus\{\alpha'_p\}$. Indeed, the group $Q/Q^r$ is semi-simple of type $\mathsf A_{p-1}\times\mathsf A_{p-1}$ and we can choose the principal isotropy group $K$ corresponding to $\s/\Delta'$ as the subgroup containing $Q^r$ and such that $K/Q^r$ is the semi-simple subgroup of type $\mathsf A_{p-1}$ diagonally embedded in $Q/Q^r$, a very reductive subgroup. Set $Q=Q^u\,L_Q$ with $L_Q\supset T$ and $K=K^u\,L_K$ with $L_K\subset L_Q$. As $L_Q$-modules we have
\[\Lie Q^u\cong V(-\alpha'_p)\oplus[\Lie Q^u,\Lie Q^u],\]
where $V(-\alpha'_p)$ is the simple $L_Q$-module of highest $T$-weight $-\alpha'_p$. This module remains simple under the action of $L_K$. Let us now choose the principal isotropy group $H\subset K$ corresponding to $\s$: we can take $H=H^u\,L$ with $H^u=(Q^u,Q^u)$ and $L=L_K$.

The subgroup $H$ is spherical and self-normalizing. To prove that it is the principal isotropy group of the wonderful variety with spherical system $\s$ it is enough to notice that there is no other cuspidal spherical system admitting $\s/\Delta'$ as quotient.

Unless otherwise stated, the same argument shows that the subgroups $H$ we give for all the remaining cases correspond to the expected spherical systems.

The other cases of this block are very similar (with one slight exception). We will put them one after the other keeping the same notation.

\[\begin{picture}(14400,2700)
\put(300,1350){
\put(0,0){\usebox{\vertex}}
\put(2700,0){\usebox{\lefttriedge}}
\multiput(0,0)(4500,0){2}{\usebox{\aone}}
\put(2700,0){\usebox{\aone}}
\put(2700,600){\usebox{\toe}}
\multiput(0,1350)(2700,0){2}{\line(0,-1){450}}
\put(0,1350){\line(1,0){2700}}
\multiput(0,-1350)(4500,0){2}{\line(0,1){450}}
\put(0,-1350){\line(1,0){4500}}
}
\put(5700,1350){\vector(1,0){3000}}
\put(9600,1350){
\put(0,0){\usebox{\vertex}}
\put(2700,0){\usebox{\lefttriedge}}
\multiput(0,0)(4500,0){2}{\usebox{\wcircle}}
\put(2700,0){\usebox{\wcircle}}
\multiput(0,-900)(4500,0){2}{\line(0,1){600}}
\put(0,-900){\line(1,0){4500}}
}
\end{picture}\]
In this case $G$ is of type $\mathsf A_1\times\mathsf G_2$, $S=\{\alpha_1,\alpha'_1,\alpha'_2\}$. The parabolic subgroup $Q\supset B_-$ corresponds to $S\smallsetminus\{\alpha'_1\}$ and has semi-simple type $\mathsf A_1\times\mathsf A_1$. The group $K/Q^r$ is the semi-simple subgroup of type $\mathsf A_1$ diagonally embedded in $Q/Q^r$. As above we can take $H=H^u\,L$ with $H^u=(Q^u,Q^u)$ and $L=L_K$.

\[\begin{picture}(12600,2700)
\put(300,1350){
\put(0,0){\usebox{\dynkinbthree}}
\multiput(0,0)(1800,0){3}{\usebox{\aone}}
\put(1800,600){\usebox{\toe}}
\multiput(0,1350)(1800,0){2}{\line(0,-1){450}}
\put(0,1350){\line(1,0){1800}}
\multiput(0,-1350)(3600,0){2}{\line(0,1){450}}
\put(0,-1350){\line(1,0){3600}}
}
\put(4800,1350){\vector(1,0){3000}}
\put(8700,1350){
\put(0,0){\usebox{\dynkinbthree}}
\multiput(0,0)(1800,0){3}{\usebox{\wcircle}}
\multiput(0,-900)(3600,0){2}{\line(0,1){600}}
\put(0,-900){\line(1,0){3600}}
}
\end{picture}\]
In this case $G$ is of type $\mathsf B_3$, $S=\{\alpha_1,\alpha_2,\alpha_3\}$. The parabolic subgroup $Q\supset B_-$ corresponds to $S\smallsetminus\{\alpha_2\}$ and has semi-simple type $\mathsf A_1\times\mathsf A_1$. The group $K/Q^r$ is the semi-simple subgroup of type $\mathsf A_1$ diagonally embedded in $Q/Q^r$. Here the simple $L_Q$-module $V_{L_Q}(-\alpha_2)$ does not remain simple under the action of $L_K$: as $L_K$-modules 
\[V_{L_Q}(-\alpha_2)\cong V_{L_K}(-\alpha_2)\oplus W,\]
where $W$ is simple of dimension 2. We take $H=H^u\,L$ with $L=L_K$ and $\Lie H^u$ equal to the $L$-complementary of $W$ in $\Lie Q^u$.

\[\begin{picture}(25200,2700)
\put(300,1350){
\put(4500,0){\usebox{\shortam}}
\put(2700,0){\usebox{\edge}}
\put(8100,0){\usebox{\leftbiedge}}
\put(0,0){\usebox{\aone}}
\multiput(2700,0)(7200,0){2}{\usebox{\aone}}
\multiput(2700,1350)(7200,0){2}{\line(0,-1){450}}
\put(2700,1350){\line(1,0){7200}}
\multiput(0,-1350)(9900,0){2}{\line(0,1){450}}
\put(0,-1350){\line(1,0){9900}}
\multiput(300,600)(1050,-1200){2}{\line(1,0){1050}}
\put(1350,600){\line(0,-1){1200}}
\put(2700,600){\usebox{\toe}}
\put(9900,600){\usebox{\tow}}
}
\put(11100,1350){\vector(1,0){3000}}
\put(15000,1350){
\put(4500,0){\usebox{\susp}}
\put(2700,0){\usebox{\edge}}
\put(8100,0){\usebox{\leftbiedge}}
\put(0,0){\usebox{\vertex}}
\multiput(0,0)(9900,0){2}{\usebox{\wcircle}}
\put(8100,0){\usebox{\wcircle}}
\multiput(0,-900)(9900,0){2}{\line(0,1){600}}
\put(0,-900){\line(1,0){9900}}
}
\end{picture}\]
In this case $G$ is of type $\mathsf A_1\times\mathsf C_{p+2}$ with $p\geq1$, $S=\{\alpha_1,\alpha'_1,\ldots,\alpha'_{p+2}\}$. The parabolic subgroup $Q\supset B_-$ corresponds to $S\smallsetminus\{\alpha'_{p+1}\}$ and has semi-simple type $\mathsf A_1\times\mathsf A_p\times\mathsf A_1$. The group $K/Q^r$ is the semi-simple subgroup of type $\mathsf A_1\times\mathsf A_p$ with the first factor diagonally embedded in the first and third factor of $Q/Q^r$. As in the first and second case of this block we can take $H=H^u\,L$ with $H^u=(Q^u,Q^u)$ and $L=L_K$.

\[\begin{picture}(18000,2700)
\put(300,1350){
\put(2700,0){\usebox{\edge}}
\put(4500,0){\usebox{\leftbiedge}}
\put(0,0){\usebox{\aone}}
\multiput(2700,0)(1800,0){3}{\usebox{\aone}}
\multiput(2700,1350)(3600,0){2}{\line(0,-1){450}}
\put(2700,1350){\line(1,0){3600}}
\multiput(0,-1350)(6300,0){2}{\line(0,1){450}}
\put(0,-1350){\line(1,0){6300}}
\multiput(300,600)(1050,-1200){2}{\line(1,0){1050}}
\put(1350,600){\line(0,-1){1200}}
\multiput(2700,600)(1800,0){2}{\usebox{\toe}}
\put(6300,600){\usebox{\tow}}
}
\put(7500,1350){\vector(1,0){3000}}
\put(11400,1350){
\put(2700,0){\usebox{\edge}}
\put(4500,0){\usebox{\leftbiedge}}
\put(0,0){\usebox{\vertex}}
\put(0,0){\usebox{\wcircle}}
\multiput(2700,0)(1800,0){2}{\usebox{\wcircle}}
\multiput(0,-900)(2700,0){2}{\line(0,1){600}}
\put(0,-900){\line(1,0){2700}}
}
\end{picture}\]
In this case $G$ is of type $\mathsf A_1\times\mathsf C_3$, $S=\{\alpha_1,\alpha'_1,\alpha'_2,\alpha'_3\}$. The parabolic subgroup $Q\supset B_-$ corresponds to $S\smallsetminus\{\alpha'_2\}$ and has semi-simple type $\mathsf A_1\times\mathsf A_1\times\mathsf A_1$. The group $K/Q^r$ is the semi-simple subgroup of type $\mathsf A_1\times\mathsf A_1$ with the first factor diagonally embedded in the first and second factor of $Q/Q^r$. As in the above case we can take $H=H^u\,L$ with $H^u=(Q^u,Q^u)$ and $L=L_K$.

\[\begin{picture}(21600,2700)
\put(300,1350){
\put(0,0){\usebox{\aone}}
\put(2700,0){\usebox{\dynkinf}}
\multiput(2700,0)(5400,0){2}{\usebox{\aone}}
\put(4500,0){\usebox{\bsecondtwo}}
\multiput(2700,1350)(5400,0){2}{\line(0,-1){450}}
\put(2700,1350){\line(1,0){5400}}
\multiput(0,-1350)(8100,0){2}{\line(0,1){450}}
\put(0,-1350){\line(1,0){8100}}
\multiput(300,600)(1050,-1200){2}{\line(1,0){1050}}
\put(1350,600){\line(0,-1){1200}}
\put(2700,600){\usebox{\toe}}
\put(8100,600){\usebox{\tow}}
}
\put(9300,1350){\vector(1,0){3000}}
\put(13200,1350){
\put(0,0){\usebox{\vertex}}
\put(2700,0){\usebox{\dynkinf}}
\multiput(0,0)(8100,0){2}{\usebox{\wcircle}}
\put(6300,0){\usebox{\wcircle}}
\multiput(0,-900)(8100,0){2}{\line(0,1){600}}
\put(0,-900){\line(1,0){8100}}
}
\end{picture}\]
In this case $G$ is of type $\mathsf A_1\times\mathsf F_4$, $S=\{\alpha_1,\alpha'_1,\alpha'_2,\alpha'_3,\alpha'_4\}$. The parabolic subgroup $Q\supset B_-$ corresponds to $S\smallsetminus\{\alpha'_3\}$ and has semi-simple type $\mathsf A_1\times\mathsf A_2\times\mathsf A_1$. The group $K/Q^r$ is the semi-simple subgroup of type $\mathsf A_1\times\mathsf A_2$ with the first factor diagonally embedded in the first and third factor of $Q/Q^r$. As in the above case we can take $H=H^u\,L$ with $H^u=(Q^u,Q^u)$ and $L=L_K$.

\[\begin{picture}(15600,8400)
\put(300,6900){
\put(2700,0){\usebox{\edge}}
\put(4500,0){\usebox{\edge}}
\put(0,0){\usebox{\aone}}
\multiput(2700,0)(1800,0){3}{\usebox{\aone}}
\put(2700,1500){\line(0,-1){600}}
\put(2700,1500){\line(1,0){3600}}
\put(6300,900){\line(0,1){350}}
\put(6300,1425){\line(0,1){75}}
\multiput(0,-1350)(6300,0){2}{\line(0,1){450}}
\put(0,-1350){\line(1,0){6300}}
\multiput(300,600)(1050,-1200){2}{\line(1,0){1050}}
\put(1350,600){\line(0,-1){1200}}
\put(2700,600){\usebox{\toe}}
\put(6300,600){\usebox{\tow}}
\put(6300,0){\usebox{\pluscsecondm}}
\put(4500,600){\usebox{\tobe}}
}
\put(7800,5100){\vector(0,-1){3000}}
\put(300,900){
\put(0,0){\usebox{\vertex}}
\multiput(2700,0)(1800,0){4}{\usebox{\edge}}
\put(9900,0){\usebox{\shortsusp}}
\put(11700,0){\usebox{\dynkincthree}}
\multiput(0,0)(2700,0){2}{\usebox{\wcircle}}
\put(4500,0){\usebox{\wcircle}}
\multiput(0,-900)(2700,0){2}{\line(0,1){600}}
\put(0,-900){\line(1,0){2700}}
}
\end{picture}\]
This can be seen as a generalization of the fifth case, $G$ is of type $\mathsf A_1\times\mathsf C_{p+3}$ with $p\geq1$, $S=\{\alpha_1,\alpha'_1,\ldots,\alpha'_{p+3}\}$. The parabolic subgroup $Q\supset B_-$ corresponds to $S\smallsetminus\{\alpha'_2\}$ and has semi-simple type $\mathsf A_1\times\mathsf A_1\times\mathsf A_{p+1}$. The group $K/Q^r$ is the semi-simple subgroup of type $\mathsf A_1\times\mathsf A_{p+1}$ with the first factor diagonally embedded in the first and second factor of $Q/Q^r$. As in the above case we can take $H=H^u\,L$ with $H^u=(Q^u,Q^u)$ and $L=L_K$.

\subsubsection{}

We discuss the case $\mathsf{b}^\mathsf{y}(4)$. As above, we consider the quotient:

\[\begin{picture}(16200,2850)(-300,-1350)
\put(0,0){\usebox{\dynkinbfour}}
\multiput(0,0)(1800,0){4}{\usebox{\aone}}
\multiput(0,1500)(3600,0){2}{\line(0,-1){600}}
\put(0,1500){\line(1,0){3600}}
\multiput(1800,1200)(3600,0){2}{\line(0,-1){300}}
\multiput(1800,1200)(1900,0){2}{\line(1,0){1700}}
\multiput(0,-1350)(5400,0){2}{\line(0,1){450}}
\put(0,-1350){\line(1,0){5400}}
\multiput(1800,600)(3600,0){2}{\usebox{\tow}}
\multiput(1800,600)(1800,0){2}{\usebox{\toe}}
\put(6300,0){\vector(1,0){3000}}
\put(10200,0){
\put(0,0){\usebox{\dynkinbfour}}\multiput(0,0)(1800,0){4}{\usebox{\wcircle}}\multiput(0,-900)(5400,0){2}{\line(0,1){600}}\put(0,-900){\line(1,0){5400}}\put(3600,0){\usebox{\gcircle}}
}
\end{picture}\]
The group $G$ is of type $\mathsf B_4$, $S=\{\alpha_1,\alpha_2,\alpha_3,\alpha_4\}$. The parabolic subgroup $Q\supset B_-$ corresponds to $S\smallsetminus\{\alpha_2\}$ and has semi-simple type $\mathsf A_1\times\mathsf B_2$. The group $K/Q^r$ is the semi-simple subgroup of type $\mathsf A_1\times\mathsf A_1$ with the first factor diagonally embedded in the first and third factor of a semi-simple subgroup $K_2/Q^r$ of $Q/Q^r$ of type $\mathsf A_1\times\mathsf A_1\times\mathsf A_1$. The simple $L_Q$-module $V_{L_Q}(-\alpha_2)$ does not remain simple under the action of $L_K$: as $L_{K_2}$-modules 
\[V_{L_Q}(-\alpha_2)\cong V_{L_{K_2}}(-\alpha_2)\oplus W_2,\]
where $W_2$ is simple of dimension 2, and as $L_K$-modules
\[V_{L_{K_2}}(-\alpha_2)\cong V_{L_K}(-\alpha_2)\oplus W,\]
where $W$ is simple of dimension 2. We take $H=H^u\,L$ with $L=L_K$ and $\Lie H^u$ equal to the $L$-complementary of $W$ in $\Lie Q^u$.

We now prove that the subgroup $H$ corresponds to $\s$. Notice that we could have taken as $\Lie H^u$ the $L$-complementry of $W_2$ in $\Lie Q^u$ obtaining a self-normalizing spherical subgroup, too. Indeed, let us also consider the following quotient of type $(\mathscr L)$ of constant defect (with fiber of dimension 2).
\[\begin{picture}(16200,2700)
\put(-1800,0){
\multiput(2100,1350)(1800,0){2}{\usebox{\edge}}
\multiput(2100,1350)(1800,0){2}{\usebox{\aone}}
\put(7500,1350){\usebox{\aone}}
\put(5700,1350){\usebox{\btwo}}
\multiput(3900,2250)(3600,0){2}{\line(0,1){450}}\put(3900,2700){\line(1,0){3600}}\multiput(2100,0)(5400,0){2}{\line(0,1){450}}\put(2100,0){\line(1,0){5400}}
\put(3900,1950){\usebox{\tow}}
}
\put(6600,1350){\vector(1,0){3000}}
\put(8400,0){
\multiput(2100,1350)(1800,0){2}{\usebox{\edge}}
\multiput(2100,1350)(1800,0){2}{\usebox{\wcircle}}
\put(7500,1350){\usebox{\wcircle}}
\put(5700,1350){\usebox{\btwo}}
\multiput(2100,450)(5400,0){2}{\line(0,1){600}}\put(2100,450){\line(1,0){5400}}
}
\end{picture}\]
This is the (only) other possible choice of a spherical system corresponding to $H$. To show that $H$ actually corresponds to the spherical system represented by the first diagram above, it is enough to notice that the second one admits also the following (non-minimal) quotient, which would correspond to the inclusion of $H$ into a semi-simple subgroup of type $D_4$.
\[\begin{picture}(16200,2700)
\put(-1800,0){
\multiput(2100,1350)(1800,0){2}{\usebox{\edge}}
\multiput(2100,1350)(1800,0){2}{\usebox{\aone}}
\put(7500,1350){\usebox{\aone}}
\put(5700,1350){\usebox{\btwo}}
\multiput(3900,2250)(3600,0){2}{\line(0,1){450}}\put(3900,2700){\line(1,0){3600}}\multiput(2100,0)(5400,0){2}{\line(0,1){450}}\put(2100,0){\line(1,0){5400}}
\put(3900,1950){\usebox{\tow}}
}
\put(6600,1350){\vector(1,0){3000}}
\put(8400,0){
\multiput(2100,1350)(1800,0){2}{\usebox{\edge}}
\put(5700,1350){\usebox{\rightbiedge}}
\put(2100,1350){\usebox{\gcircle}}
}
\end{picture}\]

\subsubsection{}

We start with $\mathsf S$-63.

\[\begin{picture}(23400,2250)
\put(300,900){
\put(0,0){\usebox{\edge}}
\put(7200,0){\usebox{\rightbiedge}}
\multiput(0,0)(9000,0){2}{\usebox{\aone}}
\put(1800,0){\usebox{\mediumam}}
\put(0,600){\usebox{\toe}}
\put(9000,600){\usebox{\tow}}
\multiput(0,1350)(9000,0){2}{\line(0,-1){450}}
\put(0,1350){\line(1,0){9000}}
}
\put(10200,900){\vector(1,0){3000}}
\put(14100,900){
\multiput(0,0)(5400,0){2}{\usebox{\edge}}
\put(1800,0){\usebox{\edge}}
\put(7200,0){\usebox{\rightbiedge}}
\put(3600,0){\usebox{\shortsusp}}
\multiput(0,0)(9000,0){2}{\usebox{\wcircle}}
\put(7200,0){\usebox{\wcircle}}
\put(0,0){\multiput(0,0)(25,25){13}{\circle*{70}}\multiput(300,300)(300,0){22}{\multiput(0,0)(25,-25){7}{\circle*{70}}}\multiput(450,150)(300,0){22}{\multiput(0,0)(25,25){7}{\circle*{70}}}\multiput(7200,0)(-25,25){13}{\circle*{70}}}
}
\end{picture}\]
The group $G$ is of type $\mathsf B_{q+2}$ with $q\geq1$, $S=\{\alpha_1,\ldots,\alpha_{q+2}\}$. The parabolic subgroup $Q\supset B_-$ corresponds to $S\smallsetminus\{\alpha_{q+2}\}$ and has semi-simple type $\mathsf A_{q+1}$. The subgroup $K/Q^r$ of $Q/Q^r$ is reductive of semi-simple type $\mathsf A_q$. The simple $L_Q$-module $V_{L_Q}(-\alpha_{q+2})$ does not remain simple under the action of $L_K$: as $L_K$-modules 
\[V_{L_Q}(-\alpha_{q+2})\cong \C\oplus W,\]
where $W$ is simple of dimension $q+1$. We take $H=H^u\,L$ with $L=L_K$ and $\Lie H^u$ equal to the $L$-complementary of $W$ in $\Lie Q^u$.

This generalizes to the case  $\mathsf S$-81:
\[\begin{picture}(22200,11100)
\put(300,9000){
\multiput(0,0)(14400,0){2}{
\put(1800,0){\usebox{\susp}}
\multiput(0,0)(5400,0){2}{\multiput(0,0)(1800,0){2}{\usebox{\aone}}}
\multiput(5400,600)(1800,0){2}{\usebox{\tow}}
\put(1800,600){\usebox{\tow}}
}
\multiput(0,0)(5400,0){2}{\usebox{\edge}}
\put(14400,0){\usebox{\edge}}
\put(19800,0){\usebox{\rightbiedge}}
\multiput(7200,0)(5400,0){2}{\usebox{\edge}}
\put(9000,0){\usebox{\shortam}}
\put(7200,600){\usebox{\toe}}
\put(14400,600){\usebox{\tow}}
\multiput(0,900)(21600,0){2}{\line(0,1){1200}}
\put(0,2100){\line(1,0){21600}}
\multiput(1800,900)(18000,0){2}{\line(0,1){900}}
\put(1800,1800){\line(1,0){18000}}
\multiput(5400,900)(10800,0){2}{\line(0,1){600}}
\put(5400,1500){\line(1,0){10800}}
\multiput(7200,900)(7200,0){2}{\line(0,1){300}}
\put(7200,1200){\line(1,0){7200}}
\multiput(0,-900)(19800,0){2}{\line(0,-1){1200}}
\put(0,-2100){\line(1,0){19800}}
\put(1800,-900){\line(0,-1){900}}
\put(1800,-1800){\line(1,0){16200}}
\multiput(18000,-1800)(0,300){3}{\line(0,1){150}}
\multiput(3600,-1500)(0,300){2}{\line(0,1){150}}
\put(3600,-1500){\line(1,0){12600}}
\put(16200,-1500){\line(0,1){600}}
\multiput(5400,-1200)(9000,0){2}{\line(0,1){300}}
\put(5400,-1200){\line(1,0){9000}}
}
\put(11100,6300){\vector(0,-1){3000}}
\put(300,1800){
\multiput(0,0)(14400,0){2}{
\put(1800,0){\usebox{\susp}}
\multiput(0,0)(5400,0){2}{\multiput(0,0)(1800,0){2}{\usebox{\wcircle}}}
}
\multiput(0,0)(5400,0){2}{\usebox{\edge}}
\multiput(7200,0)(5400,0){2}{\usebox{\edge}}
\put(14400,0){\usebox{\edge}}
\put(19800,0){\usebox{\rightbiedge}}
\put(9000,0){\usebox{\susp}}
\put(0,300){
\multiput(0,-600)(19800,0){2}{\line(0,-1){1500}}
\put(0,-2100){\line(1,0){19800}}
\put(1800,-600){\line(0,-1){1200}}
\put(1800,-1800){\line(1,0){16200}}
\multiput(18000,-1800)(0,300){3}{\line(0,1){150}}
\multiput(3600,-1500)(0,300){2}{\line(0,1){150}}
\put(3600,-1500){\line(1,0){12600}}
\put(16200,-1500){\line(0,1){900}}
\multiput(5400,-1200)(9000,0){2}{\line(0,1){600}}
\put(5400,-1200){\line(1,0){9000}}
}
\put(12600,0){\usebox{\wcircle}}
\put(7200,0){\multiput(0,0)(25,25){13}{\circle*{70}}\multiput(300,300)(300,0){16}{\multiput(0,0)(25,-25){7}{\circle*{70}}}\multiput(450,150)(300,0){16}{\multiput(0,0)(25,25){7}{\circle*{70}}}\multiput(5400,0)(-25,25){13}{\circle*{70}}}
}
\end{picture}\]
In this case the group $G$ is of type $\mathsf B_{2p+q+2}$ with $p,q\geq1$, $S=\{\alpha_1,\ldots,\alpha_{2p+q+2}\}$. The parabolic subgroup $Q\supset B_-$ corresponds to $S\smallsetminus\{\alpha_{2p+q+2}\}$ and has semi-simple type $\mathsf A_{2p+q+1}$. The subgroup $K/Q^r$ of $Q/Q^r$ is reductive of semi-simple type $\mathsf A_p\times\mathsf A_{p+q}$. The simple $L_Q$-module $V_{L_Q}(-\alpha_{2p+q+2})$ does not remain simple under the action of $L_K$: as $L_K$-modules 
\[V_{L_Q}(-\alpha_{2p+q+2})\cong W_1\oplus W_2,\]
where $W_1$ and $W_2$ are simple of dimension $p+1$ and $p+q+1$, respectively. We take $H=H^u\,L$ with $L=L_K$ and $\Lie H^u$ equal to the $L$-complementary of $W_2$ in $\Lie Q^u$.

Let us now consider the case $\mathsf S$-85.
\[\begin{picture}(16200,2700)(-300,-1350)
\put(0,0){\usebox{\dynkinffour}}
\multiput(0,0)(1800,0){4}{\usebox{\aone}}
\multiput(0,1350)(5400,0){2}{\line(0,-1){450}}
\put(0,1350){\line(1,0){5400}}
\multiput(0,-1350)(3600,0){2}{\line(0,1){450}}
\put(0,-1350){\line(1,0){3600}}
\multiput(1800,600)(1800,0){3}{\usebox{\tow}}
\put(6300,0){\vector(1,0){3000}}
\put(10200,0){
\put(0,0){\usebox{\dynkinffour}}\put(1800,0){\usebox{\wcircle}}\put(3600,0){\usebox{\atwo}}
}
\end{picture}\]
In this case the parabolic subgroup $Q\supset B_-$ corresponds to $S\smallsetminus\{\alpha_2\}$ and has semi-simple type $\mathsf A_1\times \mathsf A_2$. The subgroup $K/Q^r$ of $Q/Q^r$ is reductive of semi-simple type $\mathsf A_1\times\mathsf A_1$. The simple $L_Q$-module $V_{L_Q}(-\alpha_2)$ does not remain simple under the action of $L_K$: as $L_K$-modules 
\[V_{L_Q}(-\alpha_{2p+q+2})\cong W_1\oplus W_2\oplus W_3,\]
where $W_1$, $W_2$ and $W_3$ are simple of dimension 6, 4 and 2, respectively. We take $H=H^u\,L$ with $L=L_K$ and $\Lie H^u$ equal to the $L$-complementary of $W_2$ in $\Lie Q^u$.

\subsubsection{}

To describe the last case, $\mathsf T$-21, we do not use its quotient of type ($\mathscr L$), let us consider the following quotient, which is not minimal and is not the composition of quotients of type ($\mathscr L$).
\[\begin{picture}(16200,1800)
\put(300,900){
\put(0,0){\usebox{\atwo}}
\put(1800,0){\usebox{\btwo}}
\put(3600,0){\usebox{\edge}}
\multiput(3600,0)(1800,0){2}{\usebox{\aone}}
\put(3600,600){\usebox{\toe}}
\put(5400,600){\usebox{\tow}}
}
\put(6600,900){\vector(1,0){3000}}
\put(10500,900){
\put(0,0){\usebox{\dynkinf}}\put(5400,0){\usebox{\gcircle}}
}
\end{picture}\]
Here $G$ is of type $\mathsf F_4$, and $H$ is the parabolic subgroup of semi-simple type $\mathsf B_2$ of the symmetric subgroup of type $\mathsf B_4$.

\section{Primitive positive 1-combs}\label{sect:pp1c}
Let $\s=(S^p,\Sigma,\A)$ be a spherical system with a primitive positive 1-comb $D\in\A$. The quotient $\s\to\s/\{D\}$ is of type $(\mathscr L)$, and if $\s$ has rank $>1$ then the quotient is non-essential. Nevertheless, the approach of Section~\ref{subsect:noness} is not very convenient here, and we give a different argument.

In general, by \cite[\S3.6.1]{Lu01}, morphisms between wonderful varieties corresponding to quotients by subsets of colors of the form $\{D\}$ where $D$ is a positive comb consist of projective fibrations: smooth (surjective) morphisms with fibers isomorphic to projective spaces. In particular the following is known.

\begin{proposition}[{\cite[Proposition~3.6]{Lu01}}]
Let $\s=(S^p,\Sigma,\A)$ be a spherical system with a positive comb $D\in\A$ such that $S_D\cap\supp(\Sigma\smallsetminus S)=\varnothing$. Then the geometric realizability of $\s$ follows from the geometric realizability of $\s/\{D\}$.
\end{proposition}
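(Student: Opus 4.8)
This is, in substance, \cite[Proposition~3.6]{Lu01}; here is the line of proof I would follow. First observe that $\{D\}$ is distinguished: taking $a_D=1$, the required inequality $c(D,\sigma)\geq 0$ for all $\sigma\in\Sigma$ is exactly the hypothesis that $D$ is a positive comb. Moreover, since $c(D,-)\geq 0$ on $\Sigma$ while axiom (A1) forces $c(D,\sigma)\in\{0,1\}$ with $c(D,\sigma)=1$ only if $\sigma\in S\cap\Sigma$, the free monoid defining $\Sigma/\{D\}$ is generated by $\{\sigma\in\Sigma\;|\;c(D,\sigma)=0\}=\Sigma\smallsetminus S_D$; hence $\Sigma/\{D\}=\Sigma\smallsetminus S_D$, and the spherical roots killed by the quotient are precisely the simple roots in $S_D$. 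The hypothesis $S_D\cap\supp(\Sigma\smallsetminus S)=\varnothing$ then says that in $\s/\{D\}$ the roots of $S_D$ lie in the support of no spherical root, so each $\alpha\in S_D$ moves a single color $D'_\alpha$ of type $\Delta^b$ in $\s/\{D\}$, and these $D'_\alpha$ are pairwise distinct. Fix a wonderful $G$-variety $Y$ realizing $\s/\{D\}$, with (self-normalizing) generic isotropy group $K$. It then suffices to construct a wonderful $G$-variety $X$ with $\s_X=\s$; we shall do so together with the expected surjective $G$-morphism with connected fibres $f\colon X\to Y$, for which $\Delta_f=\{D\}$, whence $\s_X/\{D\}=\s_Y$ by Proposition~\ref{prop:morphisms}.

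By \cite[\S3.6.1]{Lu01} such an $f$ must be a projective fibration with fibres isomorphic to $\P^{n}$, where $n=|S_D|$. The plan is therefore to produce $X$ as the fibrewise projectivization $\P(\mathcal E)$ of a suitable $G$-linearized rank-$(n+1)$ vector bundle $\mathcal E$ on $Y$, assembled from the $G$-linearized line bundles naturally attached on $Y$ to the colors $D'_\alpha$ (these are available precisely because each $\alpha\in S_D$ moves only $D'_\alpha$ and lies in no spherical root's support). One fixes $\mathcal E$ as an appropriate, in general non-split, extension possessing a $G$-stable rank-one subbundle and $n$ $G$-stable corank-one quotient subbundles, so that the boundary of $X$ consists of the $f$-preimages of the $G$-divisors of $Y$ together with the $n$ new prime $G$-divisors coming from those corank-one subbundles, the remaining $B$-stable but not $G$-stable relative hyperplane providing the new color $D$. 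Equivalently, on the group side one takes $H\subset K$ to be the common kernel of the characters through which $K$ acts on the $D'_\alpha$-line bundles; the positivity of the comb $D$ is what ensures these characters are positive in the right sense, so that $G/H$ is spherical.

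It then remains to check that this $X$ is wonderful and that $\s_X=\s$. Completeness and smoothness are immediate ($X$ is a projective bundle over the complete smooth $Y$); the $G$-orbit structure of $X$ is read off from that of $Y$ and of the fibre $\P^n$, and one verifies that $X$ is \emph{simple}, its unique closed $G$-orbit being the section over the closed orbit of $Y$ --- of dimension $\dim G/P_X$, consistently with $S^p_X=S^p_Y=S^p$. For the spherical system, the open chart of the local structure of $X$ is $M_Y$ times that of the fibre, so $S^p_X=S^p$ and $\Sigma_X=\Sigma_Y\sqcup S_D=\Sigma$; here the hypothesis $S_D\cap\supp(\Sigma\smallsetminus S)=\varnothing$ is exactly what makes the new $T$-weights of $T_zX/T_z(Gz)$ come out equal to the simple roots $\alpha\in S_D$, and keeps them from interacting with the supports of the old spherical roots. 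Finally, comparing colors and Cartan pairings by Proposition~\ref{prop:morphisms}, together with axioms (A1) and (A2) applied to the $\alpha\in S_D$, pins down the value of $c$ on the new color $D$ and gives $\s_X=\s$.

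The main obstacle is the point glossed over above: showing that $\P(\mathcal E)$ really stays wonderful --- smooth, complete, and above all \emph{simple} with exactly the predicted $G$-divisors --- all the way over the boundary of $Y$, i.e. that the $\P^n$-fibration does not degenerate along the $G$-divisors and colors of $Y$. This is precisely where $S_D\cap\supp(\Sigma\smallsetminus S)=\varnothing$ is indispensable: if some $\alpha\in S_D$ appeared in the support of a genuine (non-simple) spherical root of $\s/\{D\}$, the corresponding relation would force the fibration to collapse over part of the boundary, and $X$ would either fail to be wonderful or realize a strictly larger spherical system. Once this non-degeneracy is established --- for which one follows the analysis of \cite[\S3.6]{Lu01} --- the identification $\s_X=\s$ is the routine bookkeeping indicated above.
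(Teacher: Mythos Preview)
The paper does not give its own proof of this proposition: it is stated as a citation to \cite[Proposition~3.6]{Lu01} and used as a black box. So there is nothing in the present paper to compare your argument against, and your outline is, as you say, a sketch of Luna's original approach via projective fibrations.

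As a sketch it is essentially sound. Your preliminary combinatorics is correct: $\{D\}$ is distinguished, $\Sigma/\{D\}=\Sigma\smallsetminus S_D$, and the hypothesis $S_D\cap\supp(\Sigma\smallsetminus S)=\varnothing$ does force each $\alpha\in S_D$ to move a single $\Delta^b$-color $D'_\alpha$ in the quotient, with the $D'_\alpha$ pairwise distinct (indeed distinct from all other $\Delta^b$-colors, since $\alpha+\beta\in\Sigma/\{D\}$ with $\alpha\in S_D$ would put $\alpha$ in $\supp(\Sigma\smallsetminus S)$). The dimension count giving fibres $\cong\mathbb P^{|S_D|}$ and the identification $S^p_X=S^p$ are also right.

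Two places are imprecise. First, your group-theoretic description of $H$ as ``the common kernel of the characters through which $K$ acts on the $D'_\alpha$-line bundles'' is not quite accurate: $H$ is the stabilizer of a generic line in the $K$-module underlying $\mathcal E$ at the base point, i.e.\ the locus where all the relevant characters \emph{agree} (equivalently, the kernel of the ratios $\chi_\alpha/\chi_\beta$), not the common kernel of the characters themselves. Second, the point you flag as the main obstacle --- wonderfulness of $\mathbb P(\mathcal E)$ over the boundary of $Y$ --- is exactly where the work lies in \cite[\S3.6]{Lu01}, and your paragraph on it is a restatement of the difficulty rather than an argument. Since you explicitly defer to Luna for that step, the proposal is acceptable as an outline, but it is not self-contained.
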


Therefore, we can restrict here to rank $>2$ spherical systems with a primitive positive 1-comb $D$ such that $S_D\cap\supp(\Sigma\smallsetminus S)\neq\varnothing$. There are only 4 such spherical systems.

For each such system $\s$ we give in the next sections the corresponding general isotropy $H$. In each case $H$ is spherical and self-normalizing. To check that the corresponding wonderful variety $X$ has spherical system equal to $\s$ we first notice that
$\s_X$ has the same minimal quotient we indicate for $\s$, this is obvious from our description of $H$. Then there is only one possible couple $(\rank X, S^p_X)$ compatible with the dimension of $X$ (see the formula of \cite[\S5.2]{Lu01}), with the defect of $\s_X$ and with its minimal quotient we have given.

It follows that $\s_X$ and $\s$ have the same rank. Finally, defect, rank and the minimal quotient we have given determine $\s$ uniquely, which implies $\s_X=\s$.

\subsection{}
Here $G$ is of type $\mathsf B_n$. We describe the subgroup $H$ in case of even $n$ (the odd case is slightly more complicated but follows by localization). Let us consider the following quotients $\s\to\s_1\to\s_2$ of type ($\mathscr L$).
\[\begin{picture}(11400,12600)
\put(300,11700){
\put(0,0){\usebox{\atwo}}
\put(1800,0){\usebox{\atwoseq}}
\put(9000,0){\usebox{\btwo}}
\put(10800,0){\usebox{\aone}}
}
\put(5700,10500){\vector(0,-1){3000}}
\put(300,6300){
\put(0,0){\usebox{\atwo}}
\put(1800,0){\usebox{\atwoseq}}
\put(9000,0){\usebox{\btwo}}
\put(10800,0){\usebox{\wcircle}}
}
\put(5700,5100){\vector(0,-1){3000}}
\put(300,900){
\multiput(0,0)(5400,0){2}{\usebox{\dthree}}
\put(3600,0){\usebox{\shortsusp}}
\put(9000,0){\usebox{\rightbiedge}}
\put(10800,0){\usebox{\wcircle}}
}
\end{picture}\]
Let $Q$ be the parabolic subgroup containing $B_-$ corresponding to $S\smallsetminus\{\alpha_n\}$. The subgroup $K_2$ corresponding to $\s_2$ contains $Q^r$ and $K_2/Q^r$ is very reductive of type $\mathsf C_{n/2}$ in $Q/Q^r$ which is of type $\mathsf A_n$. As $L_Q$-modules
\[\Lie Q^u\cong V(-\alpha_n)\oplus [\Lie Q^u,\Lie Q^u]\cong V(-\alpha_n)\oplus V(-\alpha_{n-1}-2\alpha_n),\]
and $V(-\alpha_n)$ remains simple under the action of $L_{K_2}$.
The subgroup $K_1$ corresponding to $\s_1$ has $L_{K_1}=L_{K_2}$ and $\Lie K_1^u=[\Lie Q^u,\Lie Q^u]$. As $L_{K_2}$-modules
\[[\Lie Q^u,\Lie Q^u]\cong V(-\alpha_{n-1}-2\alpha_n)\oplus V(0).\]
The subgroup $H$ corresponding to $\s$ has $L=L_{K_2}$ and $\Lie H^u$ of codimension 1 in $[\Lie Q^u,\Lie Q^u]$.

\subsection{}
Let us consider the following minimal quotients $\s\to\s_1\to\s_2$: the second one, $\s_1\to\s_2$, is of type ($\mathscr L$) while the first one, $\s\to\s_1$, is not.
\[\begin{picture}(27600,2250)
\put(300,900){
\multiput(0,0)(3600,0){2}{\usebox{\edge}}
\multiput(0,0)(5400,0){2}{\usebox{\aone}}
\put(3600,0){\usebox{\aone}}
\put(1800,0){\usebox{\btwo}}
\multiput(0,900)(5400,0){2}{\line(0,1){450}}
\put(0,1350){\line(1,0){5400}}
\put(0,600){\usebox{\toe}}
}
\put(6900,900){\vector(1,0){3000}}
\put(10800,-450){
\put(300,1350){\usebox{\dynkinf}}
\multiput(300,1350)(3600,0){2}{\usebox{\gcircle}}
\put(5700,1350){\usebox{\wcircle}}
}
\put(17700,900){\vector(1,0){3000}}
\put(21600,-450){
\put(300,1350){\usebox{\dynkinf}}
\put(300,1350){\usebox{\gcircle}}
\put(5700,1350){\usebox{\wcircle}}
}
\end{picture}\]
Here $G$ is of type $\mathsf F_4$. Let $Q$ be the parabolic subgroup containing $B_-$ corresponding to $S\smallsetminus\{\alpha_4\}$. The subgroup $K_2$ corresponding to $\s_2$ contains $Q^r$ and $K_2/Q^r$ is very reductive of type $\mathsf A_3$ (or equivalently $\mathsf D_3$) in $Q/Q^r$ which is of type $\mathsf B_3$. To be more precise, the root subsystem of $K_2/Q^r$ is generated by $\{\alpha_1,\alpha_2,\alpha_2+2\alpha_3\}$. As $L_Q$-modules
\[\Lie Q^u\cong V(-\alpha_4)\oplus [\Lie Q^u,\Lie Q^u],\]
and the 8-dimensional $L_Q$-module $V(-\alpha_4)$ decomposes into two 4-dimensional $L_{K_2}$-submodules.
The subgroup $K_1$ corresponding to $\s_1$ has $L_{K_1}=L_{K_2}$ and $\Lie K_1^u$ as the $L_{K_2}$-complementary in $\Lie Q^u$ of the $L_{K_2}$-simple submodule $V(-\alpha_4)$. 
The subgroup $H$ corresponding to $\s$ is the parabolic subgroup of $K_2$ containing $B_-\cap K_2$ corresponding to $\{\alpha_1,\alpha_2\}$.

\subsection{}
Let us consider the following minimal quotient $\s\to\s/\Delta'$ which is not of type ($\mathscr L$).
\[\begin{picture}(16800,1800)
\put(300,900){
\put(0,0){\usebox{\dynkinf}}
\multiput(0,0)(3600,0){2}{\usebox{\gcircle}}
\put(5400,0){\usebox{\aone}}
}
\put(6900,900){\vector(1,0){3000}}
\put(11100,900){
\put(0,0){\usebox{\dynkinf}}
\put(5400,0){\usebox{\gcircle}}
}
\end{picture}\]
The subgroup $K$ corresponding to $\s/\Delta'$ is the symmetric subgroup of type $\mathsf B_4$ of $G$, which is of type $\mathsf F_4$. The subgroup $H$ corresponding to $\s$ is the parabolic subgroup of $K$ of semi-simple type $\mathsf A_3$.

\subsection{}
Let us consider the following quotients of type ($\mathscr L$), $\s\to\s_1\to\s_2$.
\[\begin{picture}(27600,1800)
\put(300,900){
\put(0,0){\usebox{\atwo}}
\put(1800,0){\usebox{\btwo}}
\put(3600,0){\usebox{\edge}}
\multiput(3600,0)(1800,0){2}{\usebox{\aone}}
\put(5400,600){\usebox{\tow}}
}
\put(6900,900){\vector(1,0){3000}}
\put(10800,900){
\put(0,0){\usebox{\atwo}}
\put(1800,0){\usebox{\btwo}}
\put(3600,0){\usebox{\edge}}
\put(3600,0){\usebox{\wcircle}}
\put(5400,0){\usebox{\aone}}
}
\put(17700,900){\vector(1,0){3000}}
\put(21600,900){
\put(0,0){\usebox{\atwo}}
\put(1800,0){\usebox{\btwo}}
\put(3600,0){\usebox{\edge}}
\multiput(3600,0)(1800,0){2}{\usebox{\wcircle}}
}
\end{picture}\]
Here $G$ is still of type $\mathsf F_4$. Let $P$ be the parabolic subgroup containing $B_-$ corresponding to $\{\alpha_2\}$. The subgroup corresponding to $\s_2$ is $K_2=K_2^u\,L$ with $L$ that differs from $L_P$ only by its connected center and $\Lie K_2^u$ that consists of an $L$-complementary in $\Lie P^u$ of an $L$-submodule $W_2$ diagonally embedded in $V(-\alpha_1)\oplus V(-\alpha_3)$.
The subgroup corresponding to $\s_1$ is $K_1=K_1^u\,L$ where $\Lie K_1^u$ is the $L$-complementary in $\Lie K_2^u$ of $V(-\alpha_4)$.
The $L$-submodule $W_1=[W_2,V(-\alpha_3-\alpha_4)]$ of $\Lie K_1^u$ is diagonally embedded in $V(-\alpha_1-\alpha_2-\alpha_3-\alpha_4)\oplus V(-\alpha_2-2\alpha_3-\alpha_4)$.
The subgroup corresponding to $\s$ is $H=H^u\,L$ with $\Lie H^u$ the $L$-complementary in $\Lie K_1^u$ of $W_1$, containing $[\Lie K_1^u,\Lie K_1^u]$.

\end{document}